\RequirePackage{fix-cm}
\documentclass[12pt]{amsart}

\usepackage{amsmath, amssymb}    
\usepackage{tikz}
\usepackage{caption}             
\usepackage{graphicx}            
\usepackage{xcolor}
\usepackage{lipsum}
\usepackage{amsthm}   
\usepackage{hyperref}            
\usepackage[all]{hypcap} 
\usepackage{cite}                
\usepackage{geometry}            
\usepackage{enumitem}
\usetikzlibrary{calc, shapes.geometric, shadows, decorations.pathreplacing}

\newtheorem{theorem}{Theorem}[section]

\newtheorem{lemma}[theorem]{Lemma}
\newtheorem{proposition}[theorem]{Proposition}
\newtheorem{corollary}[theorem]{Corollary}
\newtheorem{remark}[theorem]{Remark}
\newtheorem{observation}[theorem]{Observation}
\newtheorem{notation}[theorem]{Notation}
\newtheorem{example}[theorem]{Example}
\newtheorem{definition}[theorem]{Definition}

\renewenvironment{proof}[1][Proof]{\par\noindent\textbf{#1.} }{\hfill$\square$\par}
\newcommand{\customref}[2]{\hyperref[#2]{#1~\ref*{#2}}}

\title{The Point Spectrum of Periodic Quantum Trees}
\author[J.~Breuer and N.~Y.~Levi]{Jonathan Breuer$^{1,3}$ and Netanel Y.~Levi$^{2,3}$}
\thanks{$^1$ Institute of Mathematics, The Hebrew University of Jerusalem, Jerusalem, 91904, Israel. Supported by the Israel Institute for Advanced Studies. E-mail: jbreuer@math.huji.ac.il.}
\thanks{$^2$ Institute of Mathematics, The Hebrew University of Jerusalem, Jerusalem, 91904, Israel. E-mail: netanel.levi2@mail.huji.ac.il.}
\thanks{$^3$ Research supported in part by Israel Science Foundation Grant No. 1378/20, and by the United States-Israel Binational Science Foundation Grant No. 2020027.}
\date{\today}

\begin{document}

\sloppy

\begin{abstract}
We study the point spectrum of a periodic quantum tree equipped with a Schr\"odinger type differential operator with delta-type vertex conditions, using subsets of the compact graph that generates the tree. We prove analogs of existing discrete results concerning the eigenvalues of such operators (see \cite{About_Regular_Graphs,Garza-Vargas}). In particular, we define the density of states measure and find the measure of eigenvalues of the periodic tree.

While most results carry over from the discrete case, a notable difference between the continuum and discrete cases is that a \textbf{regular} quantum periodic tree may have eigenvalues. 

We prove that after an arbitrarily small adjustment of edge lengths, the point spectrum of the universal cover of a compact quantum graph, with at least one cycle and the standard Schr\"odinger operator, is empty. 
\end{abstract}

\maketitle
\section{INTRODUCTION}

Periodic Jacobi matrices on trees have attracted great deal of interest in recent years (\cite{ahf,About_Regular_Graphs,abks,abs,bbgvss,Garza-Vargas,CSZ,ftp,fts,GVK,KLW,nw,sunada,woess} is a partial list). These are discrete Schr\"odinger type operators defined on universal covers of finite connected graphs (a precise definition is given below). Properties of interest include the structure of the spectrum, properties of spectral measures, the density of states, the existence of eigenvalues and the structure of eigenfunctions.

Continuum (aka `metric') graphs arise when considering a graph as a collection of line segments attached via nodes, and are important both for applied and theoretical reasons. When a continuum Schr\"odinger operator is associated to such a metric graph, the model is called a \textit{quantum graph}. Such graphs find applications in chemistry and physics, but are also interesting for their sheer mathematical properties (see, e.g.\ \cite{Introduction_to_Quantum_Graphs} and references therein). 

Periodic quantum trees are the natural continuum analogs of periodic Jacobi matrices on trees that, to the best of our knowledge, have not yet received the same amount of attention as their discrete analogs (the papers \cite{AISW2, Carlson} are notable exceptions).
In this paper we discuss the eigenvalues of a periodic quantum tree (which we will denote by $\sigma_{\mathfrak{p}}\left(\mathcal{H}_{\mathcal{T}}\right)$ and call the \textit{point spectrum}), and the edges and vertices on which the eigenfunctions are supported. Much of what we do is an adaptation of the work in \cite{Garza-Vargas} to the continuum case. There are, however, notable differences in the results.
We start with some definitions.

A \textit{discrete graph} is a pair $G=\left(V,E\right)$ where $V$ is a set of \textit{vertices}, and $E$ is a set of pairs of vertices called the \textit{edges}. For any subset of vertices and edges $X\subseteq G$, we let $V\left(X\right)$ be the set of vertices of $X$ and $E\left(X\right)$ be the set of edges of $X$. For $\upsilon,u\in V$ we write $\upsilon\sim u$ if $\upsilon$ and $u$ are adjacent, that is, there is an edge $\left(\upsilon,u\right)\in E$. Similarly, two edges are adjacent if they share a common vertex, and an edge and a vertex are adjacent if the vertex is one of the endpoints of the edge.

We consider only \textit{simple} graphs — that is, graphs without loops (edges connecting a vertex to itself) and without multiple edges (more than one edge between the same pair of vertices). As we explain below, this apparent limitation is not a real restriction on the generality of our results.

We further restrict our attention to locally finite graphs, namely graphs such that each vertex has a finite number of neighbors. The \textit{degree} of a vertex is the number of edges incident to it. A graph is called \textit{d-regular} if every vertex has degree d.

It will sometimes be convenient to consider $E$ as a set of \textit{oriented edges}, where two directions are assigned to each edge. Given two vertices \( v \) and \( u \), we write \( (v, u) \) for the edge directed from \( v \) to \( u \). For each directed edge \( e \in E \), we denote by \( \check{e} \) the same edge with the opposite orientation.

\begin{notation}\label{E+-}
For a vertex \( v \in V \), we define:
\begin{itemize}
  \item \( E_v \) as the set of all edges incident to \( v \), without orientation;
  \item \( E_v^{\mathrm{+}} = \{ (u, v) \mid u \sim v \} \), the set of edges \textit{incoming} to \( v \);
  \item \( E_v^{\mathrm{-}} = \{ (v, u) \mid u \sim v \} \), the set of edges \textit{outgoing} from \( v \).
\end{itemize}
\end{notation}
A \textit{cycle} is a sequence of vertices \( v_1, \dots, v_n \) such that \( v_i \neq v_j \) for all \( i \neq j \), \( v_i \sim v_{i+1} \) for all \( i \in \{1, \dots, n-1\} \), and \( v_n \sim v_1 \).
A \textit{pure cycle} is a cycle in which all vertices have degree 2, except possibly \( v_1 \), which may have higher degree and serves as the point of connection between the cycle and the rest of the graph.

A \textit{weighted discrete graph} is defined as a tuple \( G = (V, E, a, b) \), where \( (V, E) \) is a discrete graph, \( a : E \to \mathbb{C} \) is a function assigning \textit{edge weights}, and \( b : V \to \mathbb{R} \) is a function assigning a \textit{potential}. We require that \( a_{\check{e}} = \overline{a_e} \) for every \( e \in E \).

This data defines a self-adjoint operator  
\[
A_G : \ell^2(V) \to \ell^2(V)
\]  
called the \textit{Jacobi operator}, which acts as follows:
\begin{equation}
(A\eta)(\upsilon) = b_{\upsilon} \eta(\upsilon) + \sum_{e = (u, \upsilon) \in E^+_{\upsilon}} a_e \, \eta(u) \,.
\label{Discrete_Laplace}
\end{equation}

Given a graph $G=\left(V,E\right)$ and a function $\ell:E\rightarrow\mathbb{R}_{+}=\left\{ x\in\mathbb{R}:x>0\right\} $, we obtain a \textit{metric graph}, namely the corresponding complex for which the length of an edge $e\in E$ is given by $\ell\left(e\right)=\ell(\check{e})=\ell_{e}$. We denote by \( x_e \) the coordinate along the edge \( e \), where we denote the corresponding segment by $I_{e}$. We denote such a metric graph by the letter $\Gamma$ to distinguish it from the combinatorial structure $G$, and write $\Gamma=\left(V,E,\ell\right)$. 

The topological space underlying $\Gamma$ is obtained by $\bigcup_{e\in E}I_{e}/\sim$, where $\sim$ identifies endpoints of these intervals according to the incidence relations determined by the graph $(V,E,\ell)$: each endpoint $0$ or $\ell_e$ is glued to the corresponding vertex $v \in V$.

The metric $d_{\Gamma}$ on $\Gamma$ is defined by
\[
d_{\Gamma}(x,y) := \inf \{ \text{length}(\gamma) \,:\, \gamma \text{ is a path in $\Gamma$ connecting $x$ to $y$} \},
\]
where the length of a path is calculated by summing the Euclidean lengths of its segments in the corresponding edge intervals $I_{e}$. This metric induces the natural topology of $\Gamma$ as a one-dimensional complex.
We will consider only metric graphs where the set of edge lengths $\left\{\ell(e) \right\}_{e\in E}$ is bounded below, namely there is $\delta=\delta_\Gamma>0$ such that $\ell(e)>\delta$ for all $e\in E$. Under this assumption, a \textit{compact metric graph} is a metric graph with a finite number of vertices and edges. 

\medskip
A \textit{quantum graph} $\Gamma$ is a metric graph equipped with
the Hilbert space $\bigoplus_{e\in E}L_{2}\left(e\right)$ and with a
self-adjoint differential operator on it, whose domain is a subspace
of functions in the direct sum of Sobolev spaces $\bigoplus_{e\in E}H^{2}\left(e\right)$,
that satisfy certain conditions on the values and derivatives taken
at the vertices by the edges. In our case, the differential operator with which the graph is equipped is a \textit{Schr\"odinger-type} differential
operator. This is an operator of the form
\begin{equation}
\left(\mathcal{H}_{\Gamma}f\right)\left(x_{e}\right)=-\frac{d^{2}f}{dx_{e}^{2}}\left(x_{e}\right)+W\left(x_{e}\right)f\left(x_{e}\right)\label{Schrodinger_Operator}
\end{equation}
where $W$ is a real-valued function on the graph, called the \textit{potential}.  
We assume that $W$ is continuous on each edge (viewed as a closed interval), but we do not require matching values at the vertices across different edges.
The \textit{standard Schr\"odinger operator} (aka the Laplacian) corresponds to the case $W=0$. 

We denote the domain of \( \mathcal{H}_\Gamma \) by $\operatorname{Dom}\left(\mathcal{H}_{\Gamma}\right)\subseteq\bigoplus_{e\in E}H^{2}\left(e\right)$.
This domain is defined through the `gluing' conditions at the vertices \cite{Introduction_to_Quantum_Graphs}. A certain class of vertex conditions for which \( \mathcal{H}_\Gamma \) is self-adjoint are the \textit{delta-type conditions}. At each vertex \( \upsilon \in V(\Gamma) \), a function \( f = \bigoplus_{e \in E} f_e \in \operatorname{Dom}(\mathcal{H}_\Gamma) \) satisfies the following:
\begin{equation}
\begin{cases}
f \text{ is continuous at } \upsilon, \text{ and}\\
\sum_{e \in E^-_{\upsilon}} \dfrac{d f_e}{dx_e}(\upsilon) = \alpha_{\upsilon} f(\upsilon),
\end{cases}
\label{Delta_Type}
\end{equation}
for some real constant \( \alpha_{\upsilon} \in \mathbb{R} \).  
To define the derivatives in \eqref{Delta_Type}, we fix an orientation on each edge. The derivatives are taken in the direction \textit{outgoing} from the vertex \( \upsilon \), and hence the expression involves the outgoing derivatives along the incident edges. \textit{Kirchhoff's condition} is a special case of the $\delta$-type vertex condition, corresponding to the choice \( \alpha_{\upsilon} = 0 \).  

The class of \textit{extended $\delta$-type} vertex conditions also includes \textit{Dirichlet conditions}, which require:
\begin{equation}
\begin{cases}
f \text{ is continuous at } \upsilon,  \text{ and}\\
f(\upsilon) = 0.
\end{cases}
\label{Dirichlet}
\end{equation}
It is customary to formally denote this condition by \( \alpha_{\upsilon} = \infty \). Unless stated otherwise, when referring to $\delta$-type vertex conditions, we include both standard and extended $\delta$-type conditions.

\begin{remark}
A vertex with Dirichlet boundary conditions effectively separates the graph into several connected components. It is often useful to consider such vertices as leaves (i.e.\ vertices of degree one).
\end{remark}

A \textit{cover} of a quantum graph \( \Gamma = \left(V,E,\ell,\mathcal{H}_{\Gamma}\right) \) is another quantum graph \( \Lambda = (\mathcal{V},\mathcal{E},\ell_{\Lambda},\mathcal{H}_{\Lambda}) \), together with a continuous \textit{covering map} \( \Xi: \Lambda \to \Gamma \), mapping vertices to vertices and edges to edges, which is a local isomorphism preserving edge lengths, vertex gluing data, and potential. Namely the following conditions hold:
\begin{align}
& \forall \upsilon, u \in \mathcal{V},\quad \upsilon \sim u \Rightarrow \Xi(\upsilon) \sim \Xi(u); \label{Covering}  \\
& \forall\, \upsilon\in\mathcal{V},\ \forall\, u\in V, 
\qquad \Xi(\upsilon)\sim u \ \Longrightarrow\ 
\exists\,\tilde{u}\in\Xi^{-1}(u)\ \text{such that}\ \upsilon\sim\tilde{u};
\nonumber\\
& \forall e \in \mathcal{E},\quad \ell_e = \ell_{\Xi(e)}; \nonumber \\
& \forall e \in \mathcal{E},\ \forall x_e \in I_{e},\quad \Xi(x_e) = x_{\Xi(e)}; \nonumber \\
& \forall x \in \Lambda,\quad W(x) = W(\Xi(x)); \nonumber \\
& \forall \upsilon \in \mathcal{V},\quad \alpha_{\upsilon} = \alpha_{\Xi(\upsilon)}. \nonumber \\
 \nonumber
\end{align}
We refer to \( \Lambda \) as a \textit{cover} of \( \Gamma \).  
If \( \Lambda \) is a finite cover, then there exists \( n \in \mathbb{N} \) such that \( |V(\Lambda)| = n \cdot |V(\Gamma)| \), in which case \( \Lambda \) is called an \textit{\(n\)-cover} of \( \Gamma \) \cite{Garza-Vargas}.

\medskip

The \textit{universal cover} of a connected quantum graph \( \Gamma \) is the unique (up to isomorphism) tree \( \mathcal{T} = (\mathcal{V}, \mathcal{E},\ell_{\mathcal{T}},\mathcal{H}_{\mathcal{T}}) \) that covers every other cover of \( \Gamma \).  
If \( \Gamma \) is a tree then $\Gamma=\mathcal{T}$. Otherwise, $\Gamma$ contains at least one cycle, and $\mathcal{T}$ is an infinite tree constructed by unfolding all cycles into infinite, non-repeating paths.

A \textit{periodic quantum tree} is the quantum tree \( \mathcal{T} = (\mathcal{V}, \mathcal{E},\ell_{\mathcal{T}},\mathcal{H}_{\mathcal{T}}) \), obtained as the universal cover of a compact connected quantum graph \( \Gamma = \left(V,E,\ell,\mathcal{H}_{\Gamma}\right) \). 

In the context of discrete graphs $\left(V,E,a,b\right)$, one defines the notion of cover, $\left(\mathcal{V},\mathcal{E},\mathbf{a},\mathbf{b}\right)$ , similarly, where here the covering map preserves vertex potentials and edge weights. That is,
\begin{align*}
&	\forall\upsilon,u\in\mathcal{V},\quad \upsilon\sim u\Rightarrow\Xi(\upsilon)\sim\Xi(u)\ \,\text{and}\ \,\mathbf{a}_{\left(\upsilon,u\right)}=a_{\left(\Xi(\upsilon),\Xi(u)\right)}\\
&	\forall\upsilon\in\mathcal{V},\quad \mathbf{b}_{\upsilon}=b_{\Xi(\upsilon)}\,.
\end{align*}
The notion of universal cover is also defined similarly and the operator obtained on the universal cover of a finite discrete graph is known as a periodic Jacobi matrix on a tree. 
\\

As noted above, without loss of generality we consider only simple graphs. One way to see that this assumption is not restrictive in general, is to note that any loop can be modified by inserting an auxiliary vertex equipped with Kirchhoff conditions, without affecting the function space or the domain \( \operatorname{Dom}(\mathcal{H}_\Gamma) \). A similar procedure can be carried out for eliminating multiple edges. In the particular case of Theorem~\ref{Theorem_regular} below, such a procedure is unacceptable since the insertion of a Kirchhoff vertex of degree 2 destroys regularity. Nevertheless, since we are dealing with an operator on the universal cover of the graph $\Gamma$, we can always take a finite cover of sufficiently high degree where there are no loops or multiple edges (such a cover exists by \cite[Lemma 2.3]{Garza-Vargas}, see also Theorem \ref{ESM_limit} below), and may consider the universal cover as arising from that finite cover. 

Furthermore, with the exception of Theorem~\ref{Theorem_regular}, all results generalize to graphs where the potential is merely piecewise continuous on each edge.
This is achieved, again, by introducing finitely many degree-2 vertices (with Kirchhoff conditions) at the discontinuity points, thereby reducing the problem to the continuous case.
\\

The point spectrum of periodic Jacobi matrices on trees has been studied in \cite{About_Regular_Graphs, Garza-Vargas} and our aim in this paper is to carry out a similar project in the continuum case. 
As remarked above, there are some qualitative differences between the continuum and discrete settings. Mostly these differences are due to the existence of eigenfunctions that do not vanish on an edge but vanish at its two vertices.
For example, \cite{About_Regular_Graphs} shows that a periodic Jacobi matrix on a regular tree has no eigenvalues. In the next example, we show that this result does not hold in the continuum case. 

\begin{example}\label{example_regular}
Let $\Gamma = (V, E, \ell, \mathcal{H}_{\Gamma})$ be the complete quantum graph on 5 vertices, where $\ell_e = 2\pi$ for every $e \in E$. This is a 4-regular quantum graph. We equip $\Gamma$ with the standard Schr\"odinger operator and impose Kirchhoff conditions at all vertices. Let $\mathcal{T} = (\mathcal{V}, \mathcal{E})$ be its universal cover with the standard operator $\mathcal{H}_{\mathcal{T}}$.

$\mathcal{T}$ is an infinite 4-regular tree, with all edges having the same length. Fix a vertex $r \in \mathcal{V}$ and divide the four edges incident to $r$, $(e_1,e_2,e_3,e_4)$, into two pairs: $(e_1,e_2)$, and $(e_3,e_4)$. The subtree $D_+ \subseteq \mathcal{T}$ is obtained by removing $(e_1,e_2)$ and keeping the connected component of $r$ from what remains. Similarly, the subtree $D_-$ is obtained from removing $(e_3,e_4)$ and keeping the connected component of $r$ from what remains. Note that $D_+\cap D_-=\{r\}$.

For each edge $e$ in $\mathcal{T}$ we fix an orientation such that the coordinate $x_e \in [0,2\pi]$ increases away from $r$.
We define an eigenfunction $f$ on $\mathcal{T}$ such that on $D_{+}$, for each edge $e \in \mathcal{E}$ at distance $2\pi n$ from $r$, the function is
\[
f_e(x_e) = \frac{1}{3^n} \sin\left(x_e\right),
\]
and on $D_{-}$, for each edge $e \in \mathcal{E}$ at distance $2\pi n$ from $r$, the function is
\[
f_e(x_e) = -\frac{1}{3^n} \sin\left(x_e\right).
\]
This graph is shown in Figure~\ref{fig:example_regular}.

\begin{center}
        \centering
\begin{tikzpicture}[
    scale=3.5,
    vertex/.style={circle, fill=black, inner sep=1.2pt},
    edge_plus/.style={blue, thick},
    edge_minus/.style={red, thick},
    label_style/.style={font=\normalsize, midway, sloped, yshift=3pt}
]

    \node[vertex, label={[xshift=1pt, yshift=0.2pt]right:{\Large $r$}}] (r) at (0,0) {};

    \foreach \angle/\name in {45/u1, 135/u2} {
        \draw[edge_plus] (r) -- (\angle:0.4) 
            node[vertex] (\name) {}
            node[label_style] {$\sin(x)$};
            
        \foreach \i/\subname in {-1/a, 0/b, 1/c} {
            \pgfmathsetmacro{\newangle}{\angle + \i*38}
            \draw[edge_plus] (\name) -- ($(\name) + (\newangle:0.5)$)
                node[vertex] (\name\subname) {}
                node[label_style, pos=0.6] {$\frac{1}{3}\sin(x)$};
                
            \foreach \j in {-1, 0, 1} {
                \pgfmathsetmacro{\finalangle}{\newangle + \j*22}
                \draw[edge_plus] (\name\subname) -- ($(\name\subname) + (\finalangle:0.75)$)
                    node[vertex] {}
                    node[label_style, pos=0.7] {$\frac{1}{9}\sin(x)$};
            }
        }
    }

    \foreach \angle/\name in {225/d1, 315/d2} {
        \draw[edge_minus] (r) -- (\angle:0.4) 
            node[vertex] (\name) {}
            node[label_style] {$-\sin(x)$};
            
        \foreach \i/\subname in {-1/a, 0/b, 1/c} {
            \pgfmathsetmacro{\newangle}{\angle + \i*38}
            \draw[edge_minus] (\name) -- ($(\name) + (\newangle:0.5)$)
                node[vertex] (\name\subname) {}
                node[label_style, pos=0.6] {$-\frac{1}{3}\sin(x)$};
                
            \foreach \j in {-1, 0, 1} {
                \pgfmathsetmacro{\finalangle}{\newangle + \j*22}
                \draw[edge_minus] (\name\subname) -- ($(\name\subname) + (\finalangle:0.75)$)
                    node[vertex] {}
                    node[label_style, pos=0.6] {$-\frac{1}{9}\sin(x)$};
            }
        }
    }

    \node[blue, font=\Large\bfseries] at (2, 1) {$D_+$};
    \node[red, font=\Large\bfseries] at (2, -1) {$D_-$};

\end{tikzpicture}

    \vspace{1.5em}

    \captionof{figure}{The universal cover from Example~\ref{example_regular}. Depicted is the neighborhood of the vertex $r$. The set $D_{+}$ is shown in blue, and the set $D_{-}$ in red. All sinusoidal functions represent the eigenfunction on each edge, parameterized from the endpoint closer to $r$ to the endpoint farther from $r$.}
    \label{fig:example_regular}
\end{center}
One can see that this function satisfies the vertex conditions $($note that at each vertex $\upsilon \neq r$ there is a single edge $e \in E_\upsilon^-$ directed towards $r)$, and $f \in \bigoplus_{e\in E}H^{2}\left(e\right)$, so $f \in \mathrm{Dom}\left( \mathcal{H}_{\mathcal{T}}\right)$, and it is an eigenfunction of $\mathcal{H}_{\mathcal{T}}$ with eigenvalue 1.

\end{example}

Nevertheless, it is not difficult to characterize all possible eigenfunctions on regular periodic quantum trees. We shall prove the following

\begin{theorem}\label{Theorem_regular}
Let $\Gamma = (V, E, \ell, \mathcal{H}_{\Gamma})$ be a compact $d$-regular simple quantum graph with $\delta$-type vertex conditions, where $\alpha_{\upsilon} < \infty$ for each $\upsilon \in V$. Let $\mathcal{H}_{\Gamma}$ be a Schr\"odinger-type differential operator, and let $\mathcal{T}$ be the universal cover of $\Gamma$ equipped with the corresponding Schr\"odinger operator $\mathcal{H}_{\mathcal{T}}$.

Then for every $\lambda \in \sigma_{\mathfrak{p}} \left(\mathcal{H}_{\mathcal{T}}\right)$ there exists an edge $e \in E$ such that the following differential equation admits a nontrivial solution on $[0, \ell_e]$:
\begin{align}
 & \forall x \in [0, \ell_e],\quad -\frac{d^2 f}{dx^2}(x) + W|_e(x) f(x) = \lambda f(x) \label{Dif_Eq_for_Edge} \\
 & f(0) = f(\ell_e) = 0\,.\nonumber
\end{align}

In the case of the standard Schr\"odinger operator $($i.e., when $W = 0)$, it follows that
\[
\sigma_{\mathfrak{p}} \left(\mathcal{H}_{\mathcal{T}}\right) \subseteq \left\{ \frac{n^2 \pi^2}{\ell_e^2} \mid e \in E,\ n \in \mathbb{N} \right\}\,.
\]
\end{theorem}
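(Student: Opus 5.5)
We argue by contradiction. Suppose $\lambda\in\sigma_{\mathfrak{p}}(\mathcal{H}_{\mathcal{T}})$ with eigenfunction $f\in L^2(\mathcal{T})$, and suppose that for no edge $e\in E$ does \eqref{Dif_Eq_for_Edge} have a nontrivial solution. Then on every edge of $\mathcal{T}$ (whose data is copied from some edge of $\Gamma$) the Dirichlet problem at $\lambda$ is uniquely solvable, so the restriction $f_e$ is determined by its two endpoint values. Concretely, on $e=(u,\upsilon)$ we write $f_e=f(u)\,\phi_e+f(\upsilon)\,\psi_e$, where $\phi_e,\psi_e$ are the solutions of $-y''+Wy=\lambda y$ with boundary values $(1,0)$ and $(0,1)$. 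These are well defined precisely because of the standing assumption, and they depend only on the projected edge $\Xi(e)\in E$.

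Substituting into the $\delta$-type condition \eqref{Delta_Type} at each vertex (all $\alpha_\upsilon<\infty$, so every vertex carries a genuine derivative condition) turns the eigenvalue equation into a discrete one. The restriction $g=f|_{\mathcal{V}}$ satisfies
\[
\Bigl(\sum_{e\in E_\upsilon^-}\phi_e'(0)-\alpha_\upsilon\Bigr)g(\upsilon)+\sum_{e=(\upsilon,u)\in E_\upsilon^-}\psi_e'(0)\,g(u)=0 .
\]
Here $\psi_e'(0)\neq0$: otherwise $\psi_e$ would vanish together with its derivative at $0$ and hence identically, contradicting $\psi_e(\ell_e)=1$. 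By the Wronskian identity, $\psi_e'(0)$ equals the corresponding coefficient for the reversed edge, so the coefficients are symmetric and real. Dividing is not even needed: the equation reads $(A g)(\upsilon)=0$ for a periodic Jacobi operator $A$ on the $d$-regular tree $\mathcal{T}$, the universal cover of the weighted discrete graph $(V,E,a,b)$ with $a_e=\psi_e'(0)\neq0$ and $b_\upsilon=\sum\phi_e'(0)-\alpha_\upsilon$. All edge weights are nonzero, so the underlying graph is still $d$-regular.

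Next we check that $g\in\ell^2(\mathcal{V})$ and $g\neq0$. If $g\equiv0$, then every $f_e$ solves the Dirichlet problem and so vanishes, giving $f=0$, a contradiction. For square summability, there are finitely many edge types and edge lengths are bounded below, so a local Sobolev or ODE estimate on each edge gives $|f(\upsilon)|^2\le C\|f\|^2_{L^2(e)}$ for an incident edge $e$, with $C$ uniform over types. Summing over vertices yields $\|g\|_{\ell^2}\le C'\|f\|_{L^2}$. Hence $0$ is an eigenvalue of a periodic Jacobi matrix on a $d$-regular tree, which contradicts the result of \cite{About_Regular_Graphs} that such operators have no eigenvalues. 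This is where $\alpha_\upsilon<\infty$ and regularity are used, and it is the main step. The one delicate point is that the cited result must apply to general complex or real weights and potentials on the universal cover of a finite $d$-regular weighted graph. If we need it only for a class such as real symmetric weights, the Wronskian symmetry above supplies exactly that. The special case $W=0$ follows immediately, since the Dirichlet problem on $[0,\ell_e]$ has a nontrivial solution iff $\lambda=n^2\pi^2/\ell_e^2$ with $n\in\mathbb{N}$.
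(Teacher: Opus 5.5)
Your proof is correct, and it takes a genuinely different route from the paper.

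\textbf{Your route.} You use the classical vertex reduction. Suppose $\lambda$ is not a Dirichlet eigenvalue of any edge. Then a $\lambda$-eigenfunction on $\mathcal{T}$ is determined by its vertex values. These values form a nonzero $\ell^2$ null vector of a periodic Jacobi matrix on the same $d$-regular tree. Its weights are $a_e=\psi_e'(0)=1/g_e(\ell_e)$ in the paper's notation; they are real, nonzero, and symmetric by the identity $g_e(\ell_e)=g_{\check e}(\ell_e)$. Its potential is real. Aomoto's discrete theorem then gives the contradiction.

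\textbf{The paper's route.} The paper never leaves the continuum. It applies its own index formula, Theorem~\ref{Q-Aomoto_first_theorem}(iii):
\[
\mathcal{L}_E\,\mu(\{\lambda\})=ccX_\lambda(\Gamma)-|\partial X_\lambda(\Gamma)|>0 .
\]
It then reruns Aomoto's counting argument directly. A tree component made of vertices sends at least $d$ edges to $\partial X_\lambda(\Gamma)$, and each boundary vertex absorbs at most $d$ of them. Hence not all Q-Aomoto components can be of the first type. So a second-type component must exist, which is an edge with $g_e(\ell_e)=0$.

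\textbf{What each buys.} Your argument is much shorter and more elementary. It uses none of the machinery of derived graphs, the density of states, or the map $\varphi$. It also shows exactly why the discrete theorem fails in the continuum: the reduction breaks down only at Dirichlet eigenvalues of edges. Its one external input is Aomoto's theorem for general nonzero Hermitian weights and real potentials. That is the generality in which the paper quotes it, and you checked exactly the hypotheses that are needed. The paper's argument gives slightly more. The edge $e$ can be chosen in $E_\lambda(\Gamma)$ with both endpoints outside $V_\lambda(\Gamma)$. That is, some eigenfunction on $\mathcal{T}$ is nonzero on a lift of $e$ while vanishing at its endpoints.

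\textbf{A caveat shared by both proofs.} Both arguments tacitly need $d\ge 2$. For $d=1$, $\Gamma$ is a single edge and is its own universal cover. In that case Aomoto's theorem, and the statement itself, fail: for example, $\lambda=0$ with Kirchhoff conditions and $W=0$.
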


\medskip
A central object in the analysis of the point spectrum in the discrete case is the density of states - the average of the spectral measures over a fundamental domain of the action of the symmetry group. In order to formulate the analogous results in the continuum case, we need to devote some space to discuss the definition and properties of the density of states for periodic quantum trees. 

\subsection{THE DENSITY OF STATES}

\begin{definition}\label{The_measures}
\begin{enumerate}[label=\roman*.]
\item \textit{The (normalized) eigenvalue counting measure}, $\nu_{d}$, of a finite discrete graph $G = (V, E, a, b)$
is the measure given by counting the eigenvalues of $A_G$ (with multiplicity)
and dividing by the number of vertices in the graph. That is,
\begin{equation}
\nu_{d} = \frac{1}{\left|V\right|} \sum_{\lambda \in \sigma_{\mathfrak{p}} \left(A_G\right)} \delta_{\lambda}
\end{equation}
where $\delta_{\lambda}$ is the indicator function of $\{ \lambda \}$.

\item \textit{The (normalized) eigenvalue counting measure}, $\nu_{c}$, of a compact quantum graph $\Gamma = (V, E, \ell, \mathcal{H}_{\Gamma})$
is defined similarly to the discrete case, where now the number of eigenvalues (with multiplicity)
is divided by the sum of edge-lengths in the graph. That is,
\begin{equation}
\nu_{c} = \frac{1}{\mathcal{L}_{E}} \sum_{\lambda \in \sigma_{\mathfrak{p}} \left(\mathcal{H}_{\Gamma}\right)} \delta_{\lambda}\,.
\end{equation}
    where $\mathcal{L}_{E}=\sum_{e\in E}\ell_{e}$ is the sum of all edge-lengths in the graph.

\item Let $G = (V, E, a, b)$ be a weighted finite discrete graph and let $\mathcal{T}$ be its universal cover with covering map $\Xi : \mathcal{T} \to \Gamma$.
For every $\upsilon \in V$ choose some $\tilde{\upsilon} \in \Xi^{-1}(\upsilon)$.
Then the \textit{density of states} is defined as
\begin{equation}
\mu_{d} = \frac{1}{\left|V\right|} \sum_{\upsilon \in V} \mu_{\upsilon} \label{D_Den_of_States}
\end{equation}
where $\mu_{\upsilon}$ is the spectral measure of the indicator function $\delta_{\tilde{\upsilon}}$.

\item Let $\Gamma = (V, E , \ell, \mathcal{H}_{\Gamma})$ be a compact quantum graph with a Schr\"odinger-type differential operator and self-adjoint vertex conditions. Let
$X$ be a connected fundamental set of the universal cover, and let
$\{ g_k \}_{k \in \mathbb{N}}$ be an orthonormal basis of $L_2(X) = \bigoplus_{e \in X} L_2(e)$.
Then the \textit{density of states} is
\begin{equation}
\mu_{c} = \frac{1}{\mathcal{L}_{E}} \sum_{k \in \mathbb{N}} \mu_{g_k} \label{Q_Den_of_States}
\end{equation}
where $\mu_{g_k}$ is the spectral measure of $g_k$ and $\mathcal{L}_E=\sum_{e \in E} \ell_e$ is the sum of all edge lengths in $\Gamma$ (as above).
\end{enumerate}
\end{definition}

The proof that $\mu_{c}$ is well defined (namely, is independent of the choice of fundamental domain and orthonormal basis $\{g_k\}_k$) is given in \cite{cite_myself}.
The next theorem elucidates the connection between the eigenvalue counting measure and the density of states. 
\begin{theorem}\label{ESM_limit}
Let $\Gamma$ be a compact quantum graph with a Schr\"odinger-type differential operator, $\mathcal{T}$ its universal cover, and $\mu_{c}$ the density of states of $\mathcal{T}$. Then
\begin{enumerate}[label=\roman*.]
\item There exists a sequence of covers $\Gamma_{n}$ of $\Gamma$ whose girth (that is, the number of edges in its smallest cycle) goes to infinity as $n \to \infty$.
\item For this sequence, the eigenvalue counting measures $\nu_{n}$ of $\Gamma_{n}$ converge vaguely (namely, when integrated against continuous functions of compact support) to $\mu_{c}$.
\end{enumerate}
\end{theorem}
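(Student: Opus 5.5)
For part (i) we would take a standard random or explicit construction of finite covers with girth tending to infinity. Concretely, following \cite[Lemma 2.3]{Garza-Vargas}, we start from the combinatorial graph $G=(V,E)$ underlying $\Gamma$. Since $\pi_1(G)$ is a finitely generated free group, it is residually finite. We therefore choose a descending chain of finite-index normal subgroups $N_n$ with $\bigcap_n N_n=\{1\}$, and let $\Gamma_n=\mathcal{T}/N_n$. These are finite covers of $\Gamma$; we equip them with the lifted lengths, potential and $\alpha_\upsilon$, so they are quantum covers in the sense of \eqref{Covering}. The girth of $\Gamma_n$ is the minimal length of a nontrivial cyclically reduced word in $N_n$. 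This tends to infinity because only finitely many words of each bounded length exist, and each nontrivial one is eventually excluded from $N_n$.

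For part (ii) the plan is to compare traces of functions of the operators. By density, vague convergence reduces to checking $\int\phi\,d\nu_n\to\int\phi\,d\mu_c$ for a convenient class of test functions $\phi$. We would take $\phi(\lambda)=(\lambda-z)^{-1}$ with $z\notin\mathbb{R}$, or alternatively the heat kernel $e^{-t\lambda}$, together with a Stone--Weierstrass argument. Both the $\nu_n$ and $\mu_c$ have locally bounded mass by Weyl asymptotics, uniformly in $n$ after normalisation by $\mathcal{L}_{E(\Gamma_n)}=n'\mathcal{L}_E$. With this bound, resolvent convergence for all $z$ in the upper half plane suffices.

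Next we express each side as an integral of a kernel diagonal. For $\Gamma_n$,
\[
\int (\lambda-z)^{-1}d\nu_n=\frac{1}{\mathcal{L}_{E(\Gamma_n)}}\int_{\Gamma_n}\bigl(G_{\Gamma_n}(x,x;z)-(\text{regularisation})\bigr)\,dx .
\]
Here the resolvent is not trace class in one dimension, so we use $(\mathcal{H}-z)^{-1}-(\mathcal{H}-z_0)^{-1}$, or the heat semigroup $e^{-t\mathcal{H}}$, which is trace class. For $\mathcal{T}$, the basis-independence from \cite{cite_myself} gives
\[
\int\phi\,d\mu_c=\frac{1}{\mathcal{L}_E}\operatorname{tr}\bigl(\chi_X\phi(\mathcal{H}_{\mathcal{T}})\chi_X\bigr)=\frac{1}{\mathcal{L}_E}\int_X K^{\mathcal{T}}_\phi(x,x)\,dx .
\]
Since $\Gamma_n$ consists of copies of a fundamental domain $X$, it remains to show that $K^{\Gamma_n}_\phi(x,x)\to K^{\mathcal{T}}_\phi(\tilde x,\tilde x)$ uniformly in $x$. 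Here $\tilde x$ is a lift of $x$ to $\mathcal{T}$.

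This locality step is the main obstacle. We would use the heat kernel $\phi=e^{-t\lambda}$ with Gaussian off-diagonal (Davies--Gaffney type) bounds for Schr\"odinger operators on quantum graphs with $\delta$-conditions. These bounds are uniform because edge lengths are bounded below by $\delta_\Gamma$ and $W$ and $\alpha$ are bounded. Alternatively we would use a finite propagation speed argument for $\cos(s\sqrt{\mathcal{H}})$, which we find cleaner. The ball of radius $R$ around $x$ in $\Gamma_n$ is isometric to the corresponding ball in $\mathcal{T}$ once the girth times $\delta_\Gamma$ exceeds $2R$. Hence, for even Schwartz-class $\phi(\sqrt{\cdot})$ with compactly supported Fourier transform of support $\le R$, the diagonal kernels coincide exactly for large $n$. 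Such test functions are dense enough, when combined with the uniform mass bounds, to conclude vague convergence. The residual technical work is verifying the uniform local mass bound and handling Dirichlet vertices ($\alpha=\infty$) and the regularity of kernels on the diagonal at vertices.
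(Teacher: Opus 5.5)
The paper gives no argument of its own for this theorem. Part (i) is quoted from \cite[Lemma 2.3]{Garza-Vargas} and part (ii) is deferred to \cite{cite_myself}, so your proposal supplies a proof rather than retracing one, and in outline it is correct. Your (i) is the standard residual-finiteness construction. For (ii), finite propagation speed for $\cos(s\sqrt{\mathcal{H}+C})$ is the natural continuum replacement for the moment (closed-walk counting) argument used for bounded Jacobi matrices, which is unavailable for the unbounded $\mathcal{H}$. It gives the exact identity $\int\phi\,d\nu_n=\int\phi\,d\mu_c$ for band-limited $\phi$ once $\mathrm{girth}(\Gamma_n)\cdot\min_e\ell_e>2R$. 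This holds for any sequence of covers with girth tending to infinity, so only a density argument with uniform tail bounds remains.

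A few points need tightening.

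The girth of $\mathcal{T}/N_n$ is $\min_{1\neq g\in N_n}\tau(g)$, where $\tau(g)$ is the translation length in edges of $g$ acting on $\mathcal{T}$. It is not a word length in free generators. Your finiteness argument still works: by normality of $N_n$ you can move the base point into a fixed finite fundamental domain, and only finitely many deck transformations move such a point by at most $R$ edges.

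Since $\mathcal{H}$ may have negative spectrum, work with $\psi(\sqrt{\mathcal{H}+C})$, where $C$ is the uniform lower bound from Lemma~\ref{Bound_Spectrum_From_Below}. Even band-limited $\psi$ are dense in the weighted norm $\sup_{\lambda\geq -C}(\lambda+C+1)\left|f(\lambda)-\psi(\sqrt{\lambda+C})\right|$. To see this, truncate the Fourier transform of a smooth even approximant of $k\mapsto f(k^2-C)$.

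Most importantly, you may not take local finiteness or the Weyl bound for $\mu_c$ from Theorem~\ref{DOS_bounds} or Corollary~\ref{DOS_regular}. The paper derives both of these from the very theorem you are proving, so using them would be circular. Prove the bound directly instead. The operator $\chi_X(\mathcal{H}_{\mathcal{T}}+C+1)^{-1/2}$ factors through $H^1$ of the finitely many edges of $X$, so it is Hilbert--Schmidt, and this gives $\int(\lambda+C+1)^{-1}\,d\mu_c<\infty$. The uniform bound for the $\nu_n$ comes from the Dirichlet bracketing step in the proof of Theorem~\ref{DOS_bounds}, and that step does not use the convergence.
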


A proof of Theorem \ref{ESM_limit}(i) is given in~\cite[Lemma 2.3]{Garza-Vargas}, where the discrete analogue of Theorem \ref{ESM_limit}(ii) is also proven. The proof of Theorem \ref{ESM_limit}(ii) is given in~\cite{cite_myself}.
In Theorem \ref{DOS_bounds} below we establish bounds on the cumulative distribution function of the density of states, as well as on the measure of finite intervals, in the case of $\delta$-type vertex conditions. This in particular implies that $\mu_{c}$ is a locally finite measure. 

The paper \cite{AISW} studies the limit of the eigenvalue counting measures of finite quantum graphs by defining and studying Benjamini-Schramm limits of quantum graphs (in a much more general context than that of periodic trees). Of course, by the convergence, it follows that our definition of the density of states (in the particular case of periodic graphs) coincides with theirs.

The relevance of the density of states to this paper is through Theorem \ref{Q-Aomoto_first_theorem}(iii), where we compute the value of the density of states at each eigenvalue of the universal cover. The formula we obtain is the continuum analog of a formula that goes back to Aomoto's work \cite{About_Regular_Graphs} in the discrete case (see also \cite{Garza-Vargas}).

\subsection{MAIN RESULTS}

A central object in the analysis of eigenvalues of periodic Jacobi matrices on trees is the Aomoto set corresponding to an eigenvalue, namely the set of vertices in the underlying finite graph whose lifts support an eigenfunction (see \cite{About_Regular_Graphs, Garza-Vargas}). Here we will consider an analogous object, which we call the \emph{Q-Aomoto set}, but we will also need the discrete version. 


\begin{definition}\label{Q-Aomoto_def}

Let $G = (V, E, a, b)$ be a weighted discrete graph and let $\mathcal{T} = \left(\mathcal{V},\mathcal{E},\mathbf{a},\mathbf{b}\right)$ be its universal cover with the covering map $\Xi : \mathcal{T} \to G$. Let $\lambda \in \mathbb{R}$ be an eigenvalue of $\mathcal{T}$. The \emph{Aomoto set} $X_{\lambda}(G)$ is a subset of vertices $\upsilon \in V$ such that there exist $\tilde{\upsilon} \in \Xi^{-1}(\upsilon)$ and a function $\eta \in \ker(\lambda - A_{\mathcal{T}})$ satisfying $\eta(\tilde{\upsilon}) \neq 0$.

\medskip

Let $\Gamma = (V, E, \ell, \mathcal{H}_{\Gamma})$ be a compact quantum graph and let $\mathcal{T} = (\mathcal{V}, \mathcal{E}, \ell_{\mathcal{T}}, \mathcal{H}_{\mathcal{T}})$ be its universal cover with the covering map $\Xi : \mathcal{T} \to \Gamma$. Let $\lambda \in \mathbb{R}$ be an eigenvalue of $\mathcal{T}$. The \emph{Q-Aomoto set} $X_{\lambda}(\Gamma) = V_{\lambda}(\Gamma) \cup E_{\lambda}(\Gamma)$ is a subset of the graph $\Gamma$ defined as follows:
\begin{enumerate}[label=\roman*.]
    \item $V_{\lambda}(\Gamma)$ is the set of all vertices $\upsilon \in V$ such that there exists $\tilde{\upsilon} \in \Xi^{-1}(\upsilon)$ and a function $h \in \ker(\lambda - \mathcal{H}_{\mathcal{T}})$ satisfying $h(\tilde{\upsilon}) \neq 0$.
    
    \item $E_{\lambda}(\Gamma)$ is the set of all edges $e \in E$ such that there exist $\tilde{e} \in \Xi^{-1}(e)$ and a function $h \in \ker(\lambda - \mathcal{H}_{\mathcal{T}})$ for which $h|_{\tilde{e}}$ is not identically zero.
\end{enumerate}
\end{definition}

Note that the Q-Aomoto set $X_\lambda(\Gamma)$ is not necessarily a subgraph of $\Gamma$, but rather a subspace of the metric space $\Gamma$, some of whose points may be vertices. Indeed, in the case of $\delta$-type vertex conditions, continuity implies that if $\upsilon \in V_{\lambda}(\Gamma)$, then every edge incident to $\upsilon$ belongs to $E_{\lambda}(\Gamma)$, i.e.\ $E_{\upsilon} \subseteq E_{\lambda}(\Gamma)$. However there can be an edge (or more) that has an eigenfunction that does not vanish on it but does vanish on at least one of its adjacent vertices. Therefore it is possible for an edge to be in the Q-Aomoto set of the graph, even though some of its adjacent vertices are not.

We define the \emph{boundary} of the Q-Aomoto set by $\partial X_{\lambda}(\Gamma) \cup \partial_{\infty}X_{\lambda}(\Gamma)$, where
\begin{align}
    \partial X_{\lambda}(\Gamma) &= \left\{ \upsilon \in V \setminus V_{\lambda}(\Gamma) \;\middle|\; E_{\upsilon} \cap E_{\lambda}(\Gamma) \neq \emptyset,\ \alpha_{\upsilon} < \infty \right\}, \label{Boundary} \\
    \partial_{\infty}X_{\lambda}(\Gamma) &= \left\{ \upsilon \in V \setminus V_{\lambda}(\Gamma) \;\middle|\; E_{\upsilon} \cap E_{\lambda}(\Gamma) \neq \emptyset,\ \alpha_{\upsilon} = \infty \right\}. \label{Boundary_inf}
\end{align}

We say that two edges $e, e' \in E_{\upsilon} \cap X_{\lambda}(\Gamma)$ (for some $\upsilon \in V$) are \emph{connected in} $X_{\lambda}(\Gamma)$ if $\upsilon \in V_{\lambda}(\Gamma)$.

In general, for a subset $X$ of vertices and edges of a graph, we define a \emph{cycle in $X$} to be a sequence of vertices $\{ \upsilon_1, \ldots, \upsilon_n \} \subseteq V(X)$ and a corresponding sequence of edges 
\[
\left\{ (\upsilon_1, \upsilon_2), (\upsilon_2, \upsilon_3), \ldots, (\upsilon_{n-1}, \upsilon_n), (\upsilon_n, \upsilon_1) \right\} \subseteq E(X)
\]
such that all the listed vertices and edges are contained in $X$. That is, the entire cycle must lie within $X$.

The \emph{index} of $\Gamma$ at $\lambda \in \mathbb{R}$ is defined as
\begin{equation}
    I_{\lambda}(\Gamma) = ccX_{\lambda}(\Gamma) - |\partial X_{\lambda}(\Gamma)|, \label{Index}
\end{equation}
where $ccX_{\lambda}(\Gamma)$ denotes the number of connected components of $X_{\lambda}(\Gamma)$ and $|\partial X_{\lambda}(\Gamma)|$ is the number of vertices in $\partial X_{\lambda}(\Gamma)$ as defined in \eqref{Boundary}.\\


Having defined the basic concepts we need, we proceed to describe properties of eigenvalues of periodic quantum graphs. As mentioned above, these are continuum analogs of the results in \cite{Garza-Vargas}.


\begin{theorem}\label{Q-Aomoto_first_theorem}
Let $\Gamma = (V, E, \ell, \mathcal{H}_{\Gamma})$ be a compact quantum graph with $\delta$-type vertex conditions and a Schr\"odinger-type differential operator $\mathcal{H}_{\Gamma}$. Let $\mathcal{T}$ be the universal cover of $\Gamma$ with covering map $\Xi: \mathcal{T} \to \Gamma$, and let $\lambda \in \sigma_{\mathfrak{p}}(\mathcal{H}_{\mathcal{T}})$.

Then:
\begin{enumerate}[label=\roman*.]
    \item The subgraph of $\Gamma$ induced by the vertices in $V_{\lambda}(\Gamma)$ is acyclic.
    
    \item For each connected component $T_{1}$ of $X_{\lambda}(\Gamma)$ (with our definition  to connection between edges in $X_{\lambda}(\Gamma)$), and for every connected component, $\widetilde{T}_{1}$, of the lift of $T_{1}$ in $\mathcal{T}$
    \[
    \dim \left\{ h|_{\widetilde{T}_{1}} \;\middle|\; h \in \ker(\lambda - \mathcal{H}_{\mathcal{T}}) \right\} = 1.
    \]
    
    \item The density of states satisfies:
    \[
    \mu\left(\{\lambda\}\right) = \frac{I_{\lambda}(\Gamma)}{\mathcal{L}_{E}}.
    \]
\end{enumerate}
\end{theorem}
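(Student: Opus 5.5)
The plan is to transfer the problem from the continuum to the discrete setting through the vertex values. For a fixed $\lambda$, call $e\in E$ \emph{Dirichlet-resonant} if \eqref{Dif_Eq_for_Edge} has a nontrivial solution on $e$; otherwise the equation $-f''+Wf=\lambda f$ on $e$ is uniquely solvable for any boundary values. On each non-resonant edge, write $c_e,s_e$ for the fundamental solutions ($c_e(0)=1,c_e'(0)=0$, $s_e(0)=0,s_e'(0)=1$), so that $s_e(\ell_e)\neq 0$. Then an eigenfunction $h$ of $\mathcal{H}_{\mathcal{T}}$ is determined on such edges by its vertex values, and the $\delta$-conditions \eqref{Delta_Type} become a discrete equation
\[
\Big(\alpha_\upsilon+\sum_{e\in E_\upsilon^-}\tfrac{c_e(\ell_e)}{s_e(\ell_e)}\Big)\eta(\upsilon)-\sum_{e=(\upsilon,u)}\tfrac{1}{s_e(\ell_e)}\,\eta(u)=0 .
\]
This is a periodic Jacobi equation on the tree with real weights, modulo care about orientation and the symmetric Wronskian form of the $(\ell_e)$-end derivative.

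Parts (i) and (ii) follow from this reduction. Edges on which $h$ is nonzero but which vanish at both endpoints must be resonant, and there $h$ is a multiple of the Dirichlet solution. One runs the argument of \cite{Garza-Vargas}. If the induced subgraph on $V_\lambda(\Gamma)$ contained a cycle, its lift would be an infinite periodic path in $\mathcal{T}$. Averaging the eigenfunction over the deck group gives a nonzero periodic structure; concretely, one can use the transfer along the path together with the $\ell^2$ condition and the tree's branching to force the vertex values to decay. The mechanism is the one from Aomoto: on a tree, the value at a vertex determines, via the equation at that vertex, a relation on its children. Combined with $\ell^2$-summability over infinitely many translates of the cycle, this gives a contradiction. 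For (ii), a component $\widetilde T_1$ of the lift has vertices in $V_\lambda$, a tree, and boundary vertices where $h=0$. Because $\widetilde T_1$ is finite by (i), the restriction space is the kernel of a finite Jacobi-type system with zero boundary values, glued at the boundary. The key step is to show that this kernel is one-dimensional. Take two restrictions and form a combination vanishing at one interior point. Propagating the vertex equation through the tree (each vertex of $V_\lambda$ with all but one neighbor known determines the remaining one) then forces the combination to vanish, giving uniqueness up to scalar. Resonant edges attached to non-$V_\lambda$ vertices form their own components, where uniqueness is immediate.

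Part (iii) is where I expect the real work. I would use Theorem~\ref{ESM_limit}: $\mu_c(\{\lambda\})=\lim \dim\ker(\lambda-\mathcal{H}_{\Gamma_n})/\mathcal{L}_{E(\Gamma_n)}$ for high-girth covers. Vague convergence only gives an upper bound at a point, so we also need a matching lower bound. Let $N_n=[\Gamma_n:\Gamma]$, so $\mathcal{L}_{E(\Gamma_n)}=N_n\mathcal{L}_E$.
\begin{enumerate}
\item \emph{Lower bound.} On $\Gamma_n$, each lifted component of $X_\lambda$ carries the local eigenfunction from (ii). This gives $N_n\,ccX_\lambda$ candidate functions, and we impose the $\partial X_\lambda$ vertex conditions, which are linear constraints numbering $N_n|\partial X_\lambda|$. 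This yields $\dim\ker\ge N_n I_\lambda(\Gamma)$. For large girth the components lift injectively, by (i).
\item \emph{Upper bound.} Show that every eigenfunction on $\Gamma_n$ is supported, up to $o(N_n)$ dimensions, on lifts of $X_\lambda$. This is the crucial and hardest step. I would adapt the argument in \cite{Garza-Vargas}: take the spectral projection mass at $\lambda$ spread over the fundamental domain, and use the identification of $\mu(\{\lambda\})$ with the trace of $P_{\{\lambda\}}$ restricted to $L_2(X)$. Eigenvectors of $\mathcal{T}$ are then generated by the localized ones of (ii), subject to the boundary relations.
\end{enumerate}
Computing this trace by summing $\|P g_k\|^2$ over the edge orthonormal basis, and using that $\ker$ decomposes over translates of components minus the boundary constraints (shown to be independent on the tree by a leaf-peeling argument), gives exactly $I_\lambda(\Gamma)/\mathcal{L}_E$.
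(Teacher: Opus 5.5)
Your proposal has genuine gaps in both halves.

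\textbf{Parts (i)--(ii).} The propagation you use for one-dimensionality does not work. If $k=\tilde h-ch$ vanishes at one vertex, the vertex equation at a vertex with two or more undetermined neighbours gives only one relation among them, so the propagation stops at the first branch point. Your argument also never uses that some eigenfunction is nonzero at every vertex of the component. Without that hypothesis the conclusion is false, since finite trees with Dirichlet ends can have multiple eigenvalues. The missing ingredient is an edgewise Wronskian identity. For each edge $e$ at $\upsilon$, Green's identity on the subtree hanging off $\upsilon$ through $e$ gives $\tilde h(\upsilon)h_e'(\upsilon)=h(\upsilon)\tilde h_e'(\upsilon)$. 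Together with $h(\upsilon)\neq0$, this forces $\tilde h=ch$ on $e$. When the subtree is infinite, you must show the boundary term at infinity vanishes. This is why the paper passes to the derived graph: there $\gamma_\lambda$ maps eigenfunctions into $\ell^2$ (Lemma~\ref{Gamma_is_Embedding}), and the identity is just self-adjointness of a bounded Jacobi matrix (Lemma~\ref{Lemma_From_Derived_Graph}). Keeping the derivatives $h_e'(\upsilon)$ as unknowns also handles resonant edges whose two endpoints both lie in $V_\lambda(\Gamma)$. On such edges your coefficients $1/s_e(\ell_e)$ are undefined and the vertex values do not determine $h$ on $e$. So even for finite components, (ii) needs an extra step; the paper uses leaf-peeling over the forest of resonant edges.

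For (i), averaging over the deck group produces nothing in $\ell^2$. The actual argument runs as follows. Take, by a generic countable combination, one eigenfunction $h$ nonzero at every vertex of a component $X$ of $\Xi^{-1}(X_\lambda(\Gamma))$. The Wronskian identity gives one-dimensionality of vertex restrictions on the possibly infinite $X$. If $\Xi(X)$ contained a cycle, a deck transformation $\gamma$ with $\gamma X=X$ would give $h\circ\gamma=c\,h$ on $V(X)$. This is incompatible with $\ell^2$-summability of vertex values, whatever $c$ is. The finiteness of components that your (ii) presupposes comes out of this argument.

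\textbf{Part (iii).} Vague convergence gives only $\mu(\{\lambda\})\ge\limsup_n\nu_n(\{\lambda\})$, since $\{\lambda\}$ is compact. Your lower bound $\dim\ker(\lambda-\mathcal{H}_{\Gamma_n})\ge N_nI_\lambda(\Gamma)$ therefore yields $\mu(\{\lambda\})\ge I_\lambda(\Gamma)/\mathcal{L}_E$; this is exactly how the paper proves Theorem~\ref{Q-Aomoto_third_theorem}(ii). An upper bound on the multiplicity at exactly $\lambda$, however, says nothing about $\mu(\{\lambda\})$ from above. Eigenvalues of $\Gamma_n$ close to but different from $\lambda$ can converge into the atom.

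The trace formulation $\mathcal{L}_E\mu(\{\lambda\})=\sum_k\|P_{\{\lambda\}}g_k\|^2$ is the right framework, but its crux is only asserted. That crux is that the $|\partial X_\lambda(\Gamma)|$ families of boundary constraints cut the von Neumann dimension of the span of the translates $h_s$ by exactly $|\partial X_\lambda(\Gamma)|$ per period. Equivalently, you must show that no nonzero $\ell^2$ function on the lifts of $\partial X_\lambda(\Gamma)$ lies in the kernel of the adjoint of the constraint map. Leaf-peeling has nothing to start from on an infinite tree. The paper obtains this in four steps:
\begin{enumerate}[label=(\alph*)]
\item encode the constraints in the bipartite graph $\Gamma'$ with weights $(h_i)'_{(u_i,\upsilon)}(\upsilon)$;
\item prove $\varphi$ is an isometric isomorphism onto $\ker A_{\mathcal{T}'}$, using $X_0(\Gamma')=U$ (Proposition~\ref{Aomoto_Preservation}), which rests on discrete acyclicity and is precisely the statement that the boundary vertices carry no zero modes;
\item convert the continuum trace to $\sum_{t\in U}\mu'_t(\{0\})$ via Lemma~\ref{From_Eigenfunctions_to_mu_2};
\item invoke the discrete index formula of \cite{Garza-Vargas}.
\end{enumerate}
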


\begin{remark}
\begin{enumerate}[label=\roman*.]
    \item A graph is called `acyclic' if it contains no cycle, namely if it is a union of trees. In our case, it is not hard to see that the subgraph of $\Gamma$ induced by the vertices in $V_\lambda(\Gamma)$ is acyclic if and only if there exists no cycle in $\Gamma$ whose vertices and edges are all contained in $X_\lambda(\Gamma)$. We shall also refer to this fact by saying that $X_\lambda(\Gamma)$ is \textit{acyclic}.
    \item Since $X_{\lambda}(\Gamma)$ is acyclic, Theorem~\ref{Q-Aomoto_first_theorem}(ii) is equivalent to the statement that $\lambda$ is a simple eigenvalue of the Schr\"odinger operator on $T_{1}$ with Dirichlet vertex conditions imposed on $\partial X_{\lambda}(\Gamma) \cup \partial_{\infty} X_{\lambda}(\Gamma)$.
\end{enumerate}
\end{remark}\label{remark_Q-Aomoto_first_theorem}

Now we examine the connection between the original compact graph and the point spectrum of its universal cover.

\begin{theorem}\label{Q-Aomoto_second_theorem}
Let $\Gamma = (V, E, \ell, \mathcal{H}_{\Gamma})$ be a compact quantum graph with $\delta$-type vertex conditions and a Schr\"odinger-type differential operator $\mathcal{H}_{\Gamma}$.  Let $\mathcal{T}$ be the universal cover of $\Gamma$ with covering map $\Xi : \mathcal{T} \to \Gamma$ and suppose $\lambda \in \sigma_{\mathfrak{p}}(\mathcal{H}_{\mathcal{T}})$. Then:
\begin{enumerate}[label=\roman*.]
    \item $\lambda \in \sigma_{\mathfrak{p}}(\mathcal{H}_{\Gamma})$. Moreover, there exists a subspace of the eigenspace of $\mathcal{H}_{\Gamma}$ and $\lambda$ of dimension at least
    \[
    I_{\lambda}(\Gamma) = \mathcal{L}_{E} \cdot \mu\left(\{\lambda\}\right)
    \]
    whose elements vanish outside the Q-Aomoto set $X_{\lambda}(\Gamma)$.

    \item Let $G$ be an $n$-lift of $\Gamma$ with covering map $\xi : G \to \Gamma$. Then $\lambda \in \sigma_{\mathfrak{p}}(\mathcal{H}_{G})$. Moreover, there exists a subspace of the eigenspace of $\lambda$ in $L_{2}(G)$, of dimension at least
    \[
    n \cdot I_{\lambda}(\Gamma) = n \cdot \mathcal{L}_{E} \cdot \mu\left(\{\lambda\}\right)
    \]
    whose elements vanish outside the lifted Q-Aomoto set $\xi^{-1}(X_{\lambda}(\Gamma))$.
\end{enumerate}
\end{theorem}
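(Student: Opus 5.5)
The strategy is to build, from the one-dimensional eigenfunction spaces of Theorem~\ref{Q-Aomoto_first_theorem}(ii), explicit eigenfunctions on $\Gamma$ (and on the lift $G$) that are supported in $X_{\lambda}(\Gamma)$. The construction runs component by component, and the number of linear constraints is then counted against the index $I_\lambda(\Gamma)$. Part (ii) follows from part (i) applied to $G$: the universal cover of $G$ is again $\mathcal{T}$, and $\xi^{-1}(X_\lambda(\Gamma)) = X_\lambda(G)$. Counting vertices, components and edge lengths with multiplicity $n$ gives $I_\lambda(G)=n I_\lambda(\Gamma)$, and $\mathcal{L}_E(G)=n\mathcal{L}_E$, so it suffices to prove (i).

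\textbf{Construction for (i).} Let $T_1,\dots,T_m$ be the connected components of $X_\lambda(\Gamma)$, where $m=ccX_\lambda(\Gamma)$. By Theorem~\ref{Q-Aomoto_first_theorem}(i) each $T_j$ is acyclic, so it lifts isometrically to some $\widetilde T_j\subseteq\mathcal{T}$. By Theorem~\ref{Q-Aomoto_first_theorem}(ii) and Remark~\ref{remark_Q-Aomoto_first_theorem}, there is a unique (up to scaling) function $\phi_j$ on $T_j$ that solves the equation with eigenvalue $\lambda$ on its edges. It satisfies the $\delta$-conditions at vertices of $V_\lambda(\Gamma)$ and vanishes at the boundary vertices $\partial X_\lambda\cup\partial_\infty X_\lambda$. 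Concretely, take $\phi_j = h|_{\widetilde T_j}$ transported by $\Xi$.

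Now consider $f=\sum_j c_j\phi_j$, extended by $0$ outside $X_\lambda(\Gamma)$. We check the conditions vertex by vertex.
\begin{enumerate}[label=\roman*.]
\item Vertices of $V_\lambda(\Gamma)$ are interior to a single component, and all their incident edges lie in $E_\lambda$, so the $\delta$-condition holds there automatically.
\item Vertices outside $X_\lambda$ together with their neighbourhoods see $f\equiv 0$, so nothing needs checking.
\item At vertices of $\partial_\infty X_\lambda$ the Dirichlet condition is already satisfied.
\item At a vertex $\upsilon\in\partial X_\lambda$, continuity holds since $f(\upsilon)=0$. The remaining condition, $\sum_{e\in E_\upsilon^-} f_e'(\upsilon)=\alpha_\upsilon\cdot 0=0$, is one homogeneous linear equation in $(c_1,\dots,c_m)$.
\end{enumerate}
This gives $|\partial X_\lambda(\Gamma)|$ equations in $m$ unknowns. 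The solution space therefore has dimension at least $m-|\partial X_\lambda|=I_\lambda(\Gamma)$.

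\textbf{Injectivity and the $\mu$ identity.} The map $c\mapsto f$ is injective because the $\phi_j$ have disjoint edge supports and each is nonzero. The resulting $f$ lie in $\operatorname{Dom}(\mathcal{H}_\Gamma)$ and vanish outside $X_\lambda(\Gamma)$. The equality $I_\lambda(\Gamma)=\mathcal{L}_E\,\mu(\{\lambda\})$ is Theorem~\ref{Q-Aomoto_first_theorem}(iii).

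\textbf{Nonemptiness, the main obstacle.} It remains to show $\lambda\in\sigma_{\mathfrak p}(\mathcal H_\Gamma)$ when $I_\lambda(\Gamma)\le 0$. Since $\lambda$ is an eigenvalue of $\mathcal{T}$, we have $\mu(\{\lambda\})>0$ by the density of states being a genuine average of spectral measures, since some $g_k$ has nonzero projection onto the eigenspace. Hence $I_\lambda>0$ and the construction already yields a nonzero eigenfunction.

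Two points need care. The first is the transfer of $\phi_j$ down to $\Gamma$. One must verify that an edge of $T_j$ is not hit twice and that boundary vertices of $T_j$ are reached by possibly several edges of the same component. The acyclicity statement must be used in the form that no cycle, including those passing through a boundary vertex via two edges, obstructs the lift. If a boundary vertex is met by two edges of the same $T_j$, its equation still involves only $c_j$, and the count is unchanged. The second is the positivity of $\mu(\{\lambda\})$, which I would derive directly from the definition \eqref{Q_Den_of_States} and the invariance of the eigenspace under deck transformations.
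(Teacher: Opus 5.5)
Your proposal is correct and is essentially the paper's argument: the paper encodes your $|\partial X_\lambda(\Gamma)|$ boundary equations in the unknowns $c_j$ as the bipartite Jacobi matrix on $\Gamma'$, with weights $a'_{(t_i,\upsilon)}=(h_i)'_{(u_i,\upsilon)}(\upsilon)$ and parallel edges covering a boundary vertex met twice by one component. It restricts this matrix (lifted to the $n$-cover $G'$) to $\operatorname{Im}(P)$, notes the image lies in $\operatorname{Im}(P_\partial)$, and concludes by rank--nullity exactly as you do.

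The differences are minor. The paper proves (ii) directly on the $n$-cover of $\Gamma'$ and obtains (i) as the case $n=1$, whereas you derive (ii) from (i) applied to $G$. Your route needs $X_\lambda(G)=\xi^{-1}(X_\lambda(\Gamma))$, which follows from transitivity of deck transformations on fibres. It also needs $cc X_\lambda(G)=n\, cc X_\lambda(\Gamma)$, which follows from acyclicity of each component. Both are true. Your explicit justification of $\mu(\{\lambda\})>0$ supplies a step the paper leaves implicit.
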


\begin{definition}\label{Definition_Eligible_Q-Aomoto}
Let $\Gamma=(V,E,\ell,\mathcal{H}_{\Gamma})$ be a compact quantum graph, let $\lambda \in \mathbb{R}$, and let  $X = E_{X} \cup V_{X} \subseteq \Gamma$, where $V_{X} \subseteq V$ and $E_{X} \subseteq E$. We say that $X$ is an \emph{eligible Q-Aomoto set} of $\lambda$ if the following conditions hold:
\begin{enumerate}[label=\roman*.]
\item
If $\upsilon \in V_{X}$, then $E_{\upsilon} \subseteq E_{X}$.
\item
The vertices in $V_{X}$ induce an acyclic subgraph.
\item
For every connected component $T_{1}$ of $X$, in the sense described for Q-Aomoto sets in Remark~\ref{remark_Q-Aomoto_first_theorem}, the value $\lambda$ is an eigenvalue of the operator $\mathcal{H}_{\Gamma}$ restricted to $T_{1} \cup \partial T_{1} \cup \partial_{\infty} T_{1}$, with Dirichlet boundary conditions imposed on $\partial T_{1}$ and $\partial_{\infty} T_{1}$.
\item
\[
cc(X) - |\partial X| > 0 .
\]
\end{enumerate}
\end{definition}
We denote the family of eligible Q-Aomoto sets of $\lambda$ by $A_\lambda(\Gamma)$. For every $\lambda\in\sigma_{\mathfrak{p}}\left(\mathcal{H}_{\mathcal{T}}\right)$, the Q-Aomoto set $X_{\lambda}(\Gamma)$ is one of the eligible Q-Aomoto sets of $\lambda$. i.e., $X_{\lambda}(\Gamma) \in A_\lambda(\Gamma)$.

\begin{theorem}\label{Q-Aomoto_third_theorem}
Let $\Gamma=\left(V,E, \ell, \mathcal{H}_{\Gamma}\right)$ be a compact quantum graph with $\delta$-type
vertex conditions and with a Schr\"odinger type differential operator
$\mathcal{H}_{\Gamma}$ and let $\mathcal{T}$ be its universal cover.
For every $\lambda\in\mathbb{R}$ and every $X\in A_{\lambda}\left(\Gamma\right)$,
\begin{enumerate}[label=\roman*.]
\item $\lambda\in\sigma_{\mathfrak{p}}\left(\mathcal{H}_{\Gamma}\right)$,
with multiplicity at least $cc\left(X\right)-\left|\partial X\right|$.
\item $\lambda\in\sigma_{\mathfrak{p}}\left(\mathcal{H}_{\mathcal{T}}\right)$ and $\mu\left(\left\{ \lambda\right\} \right)\geq\frac{cc\left(X\right)-\left|\partial X\right|}{\mathcal{L}_{E}}$.
\end{enumerate}
\end{theorem}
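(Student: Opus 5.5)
Part (i) is a dimension count on $\Gamma$ itself, and part (ii) transports the construction to every finite cover and then passes to the universal cover through Theorem~\ref{ESM_limit}.

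For part (i), fix $X\in A_\lambda(\Gamma)$ with components $T_1,\dots,T_m$, so $m=cc(X)$. By condition (iii) of Definition~\ref{Definition_Eligible_Q-Aomoto}, each $T_j$ carries a nontrivial solution $\phi_j$ of $-\phi''+W\phi=\lambda\phi$. This solution satisfies the $\delta$-conditions at the vertices of $V_X\cap T_j$ and vanishes on $\partial T_j\cup\partial_\infty T_j$. Extending each $\phi_j$ by zero gives functions on $\Gamma$ that are continuous, since they vanish at the boundary vertices. Consider the span of $\phi_1,\dots,\phi_m$, which has dimension $m$ because the supports are disjoint up to vertices where the functions vanish. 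A combination $\sum c_j\phi_j$ already satisfies the vertex conditions at interior vertices of $X$, and at vertices not touching $X$. At a vertex $\upsilon\in\partial_\infty X$ the Dirichlet condition holds automatically. At a vertex $\upsilon\in\partial X$ the value is $0$, so the $\delta$-condition reduces to one linear equation,
\[
\sum_j c_j\sum_{e\in E^-_\upsilon\cap T_j}\phi_j'(\upsilon)=0 .
\]
Imposing these $|\partial X|$ linear constraints leaves a subspace of dimension at least $cc(X)-|\partial X|$. Every element of it lies in $\operatorname{Dom}(\mathcal H_\Gamma)$ and solves the eigenvalue equation, and this gives (i).

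For part (ii), apply the same construction to any finite $n$-cover $\Gamma_n$ with covering map $\xi_n$. The set $\xi_n^{-1}(X)$ is again eligible for $\lambda$ in $\Gamma_n$. To see this, note that each component of $X$ is acyclic, because $V_X$ induces an acyclic subgraph, and a component of $X$ is a tree of edges glued at vertices of $V_X$. Hence each component lifts to exactly $n$ isomorphic copies, each with its own Dirichlet eigenfunction. The boundary count scales in the same way, giving $cc(\xi_n^{-1}X)=n\,cc(X)$ and $|\partial\xi_n^{-1}X|=n|\partial X|$. So $\lambda$ has multiplicity at least $n(cc(X)-|\partial X|)$ in $\Gamma_n$, and $\nu_n(\{\lambda\})\ge (cc(X)-|\partial X|)/\mathcal L_E$, since the total length of $\Gamma_n$ is $n\mathcal L_E$.

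Take the sequence $\Gamma_n$ of Theorem~\ref{ESM_limit}. Vague convergence gives upper semicontinuity on compact sets: for any $\varepsilon>0$ choose a continuous $0\le\chi\le1$ with $\chi(\lambda)=1$ supported in $(\lambda-\varepsilon,\lambda+\varepsilon)$. Then
\[
\mu([\lambda-\varepsilon,\lambda+\varepsilon])\ge\int\chi\,d\mu=\lim\int\chi\,d\nu_n\ge \frac{cc(X)-|\partial X|}{\mathcal L_E}.
\]
Letting $\varepsilon\to0$ yields $\mu(\{\lambda\})\ge (cc(X)-|\partial X|)/\mathcal L_E>0$, using condition (iv).

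It remains to deduce $\lambda\in\sigma_{\mathfrak p}(\mathcal H_{\mathcal T})$ from $\mu(\{\lambda\})>0$. By definition $\mu_c$ is a sum of spectral measures $\mu_{g_k}$, so some $\mu_{g_k}$ has an atom at $\lambda$. This means the spectral projection $P_{\{\lambda\}}$ of $\mathcal H_{\mathcal T}$ is nonzero, which gives an eigenvalue. Alternatively, one could directly build an $\ell^2$ eigenfunction on $\mathcal T$ by solving the boundary constraints on a large finite piece of the lift with exponential decay, but the spectral-measure route avoids this.

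The main obstacle is the lifting step: verifying that $\xi_n^{-1}(X)$ has exactly $n\,cc(X)$ components and $n|\partial X|$ boundary vertices, and that the Dirichlet eigenvalue is inherited on each copy. This depends on acyclicity, including the case of a component meeting a boundary vertex through several edges, where the constraint at a lifted boundary vertex still involves only the components incident to it. The other steps are routine.
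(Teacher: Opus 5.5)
Your proposal is correct and follows essentially the paper's argument: the $|\partial X|$ linear constraints you impose at the boundary vertices are exactly the kernel equations of the bipartite Jacobi matrix $A_{\Gamma_X'}$ to which the paper applies the dimension theorem, and part (ii) is the same passage to the covers of Theorem~\ref{ESM_limit} followed by vague convergence. Several of your steps make explicit what the paper leaves implicit: the lifting count, using a single Dirichlet eigenfunction per component (which avoids the paper's appeal to Lemma~\ref{Lemma_From_Derived_Graph}(ii)), and the spectral-projection step giving $\lambda\in\sigma_{\mathfrak{p}}(\mathcal{H}_{\mathcal{T}})$; note only that the limit $\varepsilon\to0$ uses local finiteness of $\mu$ (Corollary~\ref{DOS_regular}).
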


\begin{corollary}\label{Q-Aomoto_Corollary}
Let $\Gamma=\left(V,E, \ell, \mathcal{H}_{\Gamma}\right)$ be a compact quantum graph with $\delta$-type
vertex conditions and with a Schr\"odinger type differential operator, and let $\mathcal{T}$ be its universal cover. Then:
\begin{enumerate}[label=\roman*.]
\item For every $\lambda\in\sigma_{\mathfrak{p}}\left(\mathcal{H}_{\mathcal{T}}\right)$,
\[
\mu\left(\left\{ \lambda\right\} \right)=\frac{1}{\mathcal{L}_{E}}\max_{X\in A_{\lambda}\left(\Gamma\right)}\left(cc\left(X\right)-\left|\partial X\right|\right),
\]
where this maximum is acheived by the Q-Aomoto set.
\item $\sigma_{\mathfrak{p}}\left(\mathcal{H}_{\mathcal{T}}\right)$ may
be computed from $\Gamma$ in finite time.
\end{enumerate}
\end{corollary}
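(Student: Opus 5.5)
Part (i) follows from Theorems~\ref{Q-Aomoto_first_theorem}(iii) and \ref{Q-Aomoto_third_theorem}(ii); part (ii) is a finite-time decision procedure built on part (i) and the definition of $A_\lambda(\Gamma)$.

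For part (i), fix $\lambda\in\sigma_{\mathfrak{p}}(\mathcal{H}_{\mathcal{T}})$.
\begin{enumerate}[label=\arabic*.]
\item By Theorem~\ref{Q-Aomoto_third_theorem}(ii), every $X\in A_\lambda(\Gamma)$ satisfies $\mu(\{\lambda\})\ge (cc(X)-|\partial X|)/\mathcal{L}_E$. Hence $\mu(\{\lambda\})$ bounds the supremum over $A_\lambda(\Gamma)$ from above.
\item $A_\lambda(\Gamma)$ is finite, since $X$ ranges over subsets of the finite set $V\cup E$. So the supremum is a maximum, provided $A_\lambda(\Gamma)$ is nonempty.
\item Nonemptiness and attainment both come from the Q-Aomoto set. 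By Theorem~\ref{Q-Aomoto_first_theorem}(iii), $\mu(\{\lambda\})=I_\lambda(\Gamma)/\mathcal{L}_E$ with $I_\lambda(\Gamma)=ccX_\lambda(\Gamma)-|\partial X_\lambda(\Gamma)|$. It therefore suffices to show $X_\lambda(\Gamma)\in A_\lambda(\Gamma)$, which the text asserts after Definition~\ref{Definition_Eligible_Q-Aomoto}.
\end{enumerate}

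I would verify $X_\lambda(\Gamma)\in A_\lambda(\Gamma)$ condition by condition.
\begin{itemize}
\item Condition (i) follows from continuity at vertices with $\delta$-type conditions, as noted after Definition~\ref{Q-Aomoto_def}.
\item Condition (ii) is Theorem~\ref{Q-Aomoto_first_theorem}(i).
\item For condition (iii), take a component $T_1$ and a lift $\widetilde T_1$. A nonzero eigenfunction $h$ restricted to $\widetilde T_1$ vanishes at the lifted boundary vertices, by the definition of $V_\lambda$. Since $T_1$ is acyclic, $\widetilde T_1$ projects isometrically onto $T_1$. So $h|_{\widetilde T_1}$ descends to a Dirichlet eigenfunction on $T_1\cup\partial T_1\cup\partial_\infty T_1$. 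This is the reformulation recorded in the remark after Theorem~\ref{Q-Aomoto_first_theorem}.
\item Condition (iv) is $I_\lambda(\Gamma)>0$. This holds because $\mu(\{\lambda\})>0$ for an eigenvalue of the periodic tree: an $L^2$ eigenfunction $h$ has nonzero projection onto some translate of the fundamental domain. By invariance of $\mu_c$ under the choice of fundamental set and basis, some $\mu_{g_k}$ then charges $\lambda$.
\end{itemize}
Combining these steps gives equality, attained at $X_\lambda(\Gamma)$.

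For part (ii), Theorem~\ref{Q-Aomoto_second_theorem}(i) gives $\sigma_{\mathfrak{p}}(\mathcal{H}_{\mathcal{T}})\subseteq\sigma_{\mathfrak{p}}(\mathcal{H}_\Gamma)$. Conversely, Theorem~\ref{Q-Aomoto_third_theorem}(ii) shows that $\lambda\in\sigma_{\mathfrak{p}}(\mathcal{H}_{\mathcal{T}})$ if and only if $A_\lambda(\Gamma)\neq\emptyset$.

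Whether $A_\lambda(\Gamma)$ is nonempty depends on $\lambda$ only through condition (iii). Conditions (i), (ii) and (iv) are purely combinatorial. So the procedure is as follows.
\begin{enumerate}[label=\arabic*.]
\item Enumerate the finitely many candidate sets $X\subseteq V\cup E$ satisfying (i), (ii) and (iv).
\item For each candidate $X$ and each component $T_1$, form the compact Dirichlet problem on $T_1\cup\partial T_1\cup\partial_\infty T_1$.
\item Compute the set of $\lambda$ at which every component of $X$ has $\lambda$ as a Dirichlet eigenvalue.
\item Take the union of these sets over all candidates.
\end{enumerate}

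The main obstacle is the meaning of ``finite time'' in the continuum setting: the Dirichlet spectra are infinite discrete sets. I would handle this by interpreting the claim as a finite description, namely finitely many combinatorial candidates each giving an intersection of finitely many explicit spectra. Alternatively, one can restrict to a bounded energy window, where each spectrum is finite and $\sigma_{\mathfrak{p}}(\mathcal{H}_\Gamma)\cap[a,b]$ is finite.
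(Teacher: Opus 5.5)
Your proposal is correct and follows the paper's own (very terse) argument. For (i), the paper likewise takes the lower bound of Theorem~\ref{Q-Aomoto_third_theorem}(ii) over every $X\in A_\lambda(\Gamma)$ and combines it with the identity $\mathcal{L}_E\,\mu(\{\lambda\})=I_\lambda(\Gamma)$ from Theorem~\ref{Q-Aomoto_first_theorem}(iii) and the membership $X_\lambda(\Gamma)\in A_\lambda(\Gamma)$; for (ii) it likewise rests on Theorems~\ref{Q-Aomoto_second_theorem} and~\ref{Q-Aomoto_third_theorem}. What you add is sound: the paper only asserts $X_\lambda(\Gamma)\in A_\lambda(\Gamma)$, whereas you check it condition by condition, in particular obtaining (iv) from $\mu(\{\lambda\})>0$ (so the direction $\lambda\in\sigma_{\mathfrak{p}}(\mathcal{H}_{\mathcal{T}})\Rightarrow A_\lambda(\Gamma)\neq\emptyset$ comes from this membership rather than from Theorem~\ref{Q-Aomoto_third_theorem}(ii)), and your caveat that ``finite time'' can only mean a finite combinatorial description, or a bounded energy window, is a fair reading of a claim the paper leaves equally informal.
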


From Theorem \ref{Q-Aomoto_first_theorem} and Theorem \ref{Q-Aomoto_third_theorem} we can conclude Corollary \ref{Q-Aomoto_Corollary}(i). And from Theorem \ref{Q-Aomoto_second_theorem} and Theorem \ref{Q-Aomoto_third_theorem} we get Corollary
\ref{Q-Aomoto_Corollary}(ii).

Finally, we show that the existence of non-empty point spectrum for a periodic quantum tree (equipped with the standard Laplacian) is a topologically rare event in the space of possible edge lengths.

\begin{theorem}\label{Berkolaiko_theorem}
Let $\Gamma=\left(V,E, \ell, \mathcal{H}_{\Gamma}\right)$ be a compact quantum graph with $\delta$-type
vertex conditions, where the Dirichlet condition is allowed only at
vertices of degree 1, with at least one cycle and with the standard
Schr\"odinger operator $\mathcal{H}_{\Gamma}=-d^{2}/dx^{2}$, and let
$\mathcal{T}$ be its universal cover with the covering map $\Xi:\mathcal{T}\rightarrow\Gamma$.
Then for a residual set of lengths in $\mathbb{R}_{+}^{\left|E\right|}$,
$\sigma_{\mathfrak{p}}\left(\mathcal{H}_{\mathcal{T}}\right)=\emptyset$.\\
\end{theorem}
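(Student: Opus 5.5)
Fix the combinatorial data (graph, vertex conditions $\alpha_\upsilon$) and write $\sigma_{\mathfrak{p}}(\mathcal{H}_{\mathcal{T}})(\ell)$ for the point spectrum at lengths $\ell\in\mathbb{R}_+^{|E|}$. The plan is to use Corollary~\ref{Q-Aomoto_Corollary} to reduce nonempty point spectrum to a finite list of conditions, each of which has meagre solution set. By Theorem~\ref{Q-Aomoto_first_theorem} and Corollary~\ref{Q-Aomoto_Corollary}, if $\lambda\in\sigma_{\mathfrak{p}}(\mathcal{H}_{\mathcal{T}})(\ell)$, then the Q-Aomoto set $X=X_\lambda(\Gamma)$ is an eligible Q-Aomoto set. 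There are only finitely many candidate subsets $X=V_X\cup E_X$, so it is enough to show the following for each fixed combinatorial $X$ satisfying (i), (ii), (iv) of Definition~\ref{Definition_Eligible_Q-Aomoto}: the set $B_X$ of lengths $\ell$ for which some $\lambda$ makes $X$ eligible, i.e.\ satisfies (iii), is contained in a closed nowhere dense set. A finite union of such sets is closed nowhere dense, and its complement is open dense, hence residual.

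\textbf{Step 1 (structure of an eligible $X$).} Condition (iv) says $cc(X)>|\partial X|$. Each component $T_1$ is a tree, since $V_X$ induces an acyclic subgraph and edges meet only through vertices of $V_X$. Its boundary vertices lie in $\partial X\cup\partial_\infty X$, and Dirichlet vertices have degree $1$ by hypothesis. I want to extract, for a counting reason, a component $T_1$ that is ``rigid'', meaning that $\lambda$ must be a Dirichlet eigenvalue of $T_1$ with respect to a nonempty set of Dirichlet endpoints. If every component were a whole connected component of $\Gamma$ with no boundary, then $X=\Gamma$ would be acyclic, contradicting the assumption that $\Gamma$ has a cycle. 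So at least one component $T_1$ has a nonempty boundary. The Dirichlet problem on $T_1$ then involves only the edges of $T_1$ and forces $\lambda\in\sigma(T_1^{D})$, where $T_1^D$ denotes $T_1$ with Dirichlet conditions on its boundary vertices.

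\textbf{Step 2 (independence from the cycle).} The key point is that (iv) forces several components to share the same $\lambda$ while being coupled only through boundary vertices. I would use the inequality $cc(X)\ge|\partial X|+1$ to find two components $T_1,T_2$ whose edge sets are disjoint, together with the constraint $\lambda\in\sigma(T_1^D)\cap\sigma(T_2^D)$. If instead $cc(X)=1$ and $\partial X=\emptyset$, then $X$ is a single tree whose boundary consists only of degree-one Dirichlet leaves. In that case some edge $e_0$ of a cycle of $\Gamma$ is either outside $X$ but adjacent to it, giving a boundary vertex and a contradiction, or $\Gamma$ is disconnected, which is excluded. For the two-component case, consider the map $\ell\mapsto(\ell|_{T_1},\ell|_{T_2})$. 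The set of pairs of trees with a common Dirichlet eigenvalue is a countable union of analytic hypersurfaces. These are given by $\{\lambda_j(T_1^D)(\ell)=\lambda_k(T_2^D)(\ell)\}$, and each is nontrivial because scaling the lengths of $T_1$ alone by $t$ multiplies its eigenvalues by $t^{-2}$. Each branch $\lambda_j$ is real-analytic away from crossings and strictly monotone under dilation, so each such hypersurface is closed and nowhere dense within bounded-$\lambda$ windows.

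\textbf{Step 3 (closedness and the main obstacle).} The countable union above is meagre but not closed. To get residuality, I will instead write $B_X=\bigcup_{N}B_X^{N}$, where $B_X^N$ restricts to $|\lambda|\le N$. Each $B_X^N$ is closed by continuity of Dirichlet eigenvalues in $\ell$ and local finiteness of the spectrum, and nowhere dense by the dilation argument. This makes $\bigcup_X B_X$ a meagre $F_\sigma$ set, whose complement is residual, as claimed. I expect the main obstacle to be Step 2, namely guaranteeing in every case two independent rigid constraints, or one constraint which no generic length satisfies. This requires handling carefully the components whose boundary lies entirely in $\partial_\infty X$ (degree-one Dirichlet leaves), and the degenerate situation where the shared $\lambda$ comes from two components containing lifts of the same edge of $\Gamma$, so that the dilation argument needs an edge in one component but not in the other. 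I will first try to resolve this by the counting in (iv) together with the existence of a cycle.
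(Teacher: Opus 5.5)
Your reduction to finitely many combinatorial eligible sets, and from there to a common eigenvalue of two components, is sound. The gap is in the analytic step that is supposed to make everything nowhere dense: it rests on a false claim.

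In Step 2 you use that dilating the lengths of $T_1$ by $t$ multiplies its eigenvalues by $t^{-2}$, and that branches are strictly monotone under dilation. This holds only if every $\alpha_\upsilon\in\{0,\infty\}$, whereas the theorem allows arbitrary real $\delta$-couplings.

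\begin{itemize}
\item The substitution $g(x)=\sqrt{t}\,f(tx)$ shows that the operator with lengths $t\ell$ and couplings $\alpha$ is unitarily equivalent to $t^{-2}$ times the operator with lengths $\ell$ and couplings $t\alpha$. So dilation also rescales the couplings.
\item Consequently the dilation derivative of a branch is $-2\int|f'|^2-\sum_\upsilon\alpha_\upsilon|f(\upsilon)|^2$ for a normalized eigenfunction. This has no definite sign once some $\alpha_\upsilon<0$.
\item The value $\lambda=0$ is invariant under dilation. With $\delta$-couplings it can be a Dirichlet eigenvalue of a component: a star with Dirichlet leaves has $0$ in its spectrum iff the centre coupling equals $-\sum_j 1/\ell_j$.
\end{itemize}

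Hence you have not shown that $\{\lambda_j(T_1^{D})=\lambda_k(T_2^{D})\}$ has empty interior, and Step 3 depends entirely on this.

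\textbf{The missing lemma.} What you need is: if $T$ is a finite tree with $\delta$-conditions and at least one Dirichlet vertex, then for each fixed $c\in\mathbb{R}$ the set of lengths with $c\in\sigma(T)$ has empty interior. One way to prove it is by induction on the number of edges.
\begin{itemize}
\item Take an edge $e=(\upsilon,b)$ with $b$ Dirichlet. The solution of $-y''=cy$ vanishing at $b$ has outgoing derivative-to-value ratio at $\upsilon$ equal to $m_c(\ell_e)$. This is $-\sqrt{c}\cot(\sqrt{c}\,\ell_e)$ for $c>0$, $-1/\ell_e$ for $c=0$, and $-\kappa\coth(\kappa\ell_e)$ for $c=-\kappa^2<0$. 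In all cases it is strictly increasing in $\ell_e$.
\item Green's identity shows the following. If the tree obtained by deleting $e$, with coupling $\beta$ at $\upsilon$, has $c$ as an eigenvalue for two distinct $\beta$, then $c$ is an eigenvalue of some branch $S_i$ at $\upsilon$ with Dirichlet condition at $\upsilon$.
\item By the induction hypothesis, inside any open set you can first perturb the lengths of the $S_i$ so that $c\notin\sigma(S_i)$ for all $i$.
\item After that, $c\in\sigma(T)$ forces either $c$ to be a Dirichlet eigenvalue of $e$, or $\alpha_\upsilon-m_c(\ell_e)$ to equal one fixed value. Both are destroyed by perturbing $\ell_e$.
\end{itemize}
Since $\lambda_j(T_1^{D})$ and $\lambda_k(T_2^{D})$ depend on disjoint sets of length coordinates, this lemma gives exactly the empty interior you need, and your Step 3 then goes through.

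\textbf{Your anticipated obstacles do not arise.} Suppose a component of $X$ had no vertex in $\partial X$. Then together with its boundary vertices, which are Dirichlet and hence of degree one, it would be closed under adjacency. It would therefore equal $\Gamma$ and be acyclic, contradicting the cycle hypothesis. So $|\partial X|\ge 1$, and (iv) gives $cc(X)\ge 2$ directly. Also, distinct components of $X\subseteq\Gamma$ have disjoint edge sets, so the ``same edge'' degeneracy cannot occur.

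\textbf{Comparison with the paper.} With the lemma in place, your route is genuinely different from the paper's.
\begin{itemize}
\item The paper invokes the Berkolaiko--Liu genericity theorem: generically, eigenfunctions on $\Gamma$ are either nonvanishing at all non-Dirichlet vertices or supported on a single pure cycle.
\item It rules out the first alternative using Theorem~\ref{Q-Aomoto_second_theorem}(i) together with the acyclicity of $V_\lambda(\Gamma)$.
\item It treats pure cycles separately, by making $\sigma_{\mathfrak{p}}(\mathcal{H}_{\widehat{L}})$ and $\sigma_{\mathfrak{p}}(\mathcal{H}_{\Gamma_i^L})$ generically disjoint.
\end{itemize}
You apply that last idea (edge-disjoint pieces generically have disjoint spectra) uniformly to all finitely many combinatorial eligible sets. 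This avoids both Berkolaiko--Liu and the case split, at the price of proving the elementary tree lemma above.
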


The remainder of this paper is structured as follows. In Section 2, we establish some auxiliary results and establish the promised bounds on the density of states of compact intervals.
In Section 3, we prove Theorems~\ref{Theorem_regular}, \ref{Q-Aomoto_first_theorem}, \ref{Q-Aomoto_second_theorem}, \ref{Q-Aomoto_third_theorem}, and \ref{Berkolaiko_theorem}.

\subsection*{Acknolwedgments}
The authors thank Ram Band for many useful suggestions.

\section{AUXILIARY RESULTS}

The main idea behind our analysis is to translate the continuum problem to a discrete one. Namely, given a quantum graph and an eigenvalue $\lambda$, we shall construct a corresponding discrete graph with a Jacobi matrix and a map between $\lambda$-eigenfunctions of the quantum graph and $0$-eigenfunctions of the Jacobi matrix. This idea (applied to compact graphs) goes back at least to Exner in \cite{Exner}, and has been applied in various contexts (see also \cite{AKMN,BGP,Lledo-Post,Post} for some generalizations and applications). 

We generalize the construction in \cite{Exner} in two ways. First, we consider operators with a non-zero potential. Second, we allow for eigenfunctions that vanish at both endpoints of an edge. For this we require a somewhat elaborate construction in the first step of our analysis, which we call the `derived graph' of a quantum graph. This is only necessary for the proof of a central lemma (Lemma \ref{Lemma_From_Derived_Graph}). In Section 3 the derived graph is no longer used, and a simpler correspondence to the discrete case is applied. 

In order to begin, we need to consider a local basis for solutions to the eigenvalue equation.

\begin{notation}\label{f_and_g_Definition}
Let $\Gamma = \left(V, E, \ell, \mathcal{H}_{\Gamma}\right)$ be a quantum graph with $\delta$-type vertex conditions and a Schr\"odinger-type operator $\mathcal{H}_{\Gamma}$ with a potential function $W$. For each edge $e = \left(\upsilon, u\right) \in E$, oriented from $\upsilon$ to $u$, and for each $\lambda \in \mathbb{R}$, consider the following differential equation on $I_{e}=[0,\ell_e]$:
\begin{equation}
-\frac{d^{2}h}{dx^{2}}\left(x_{e}\right)+W|_{e}\left(x_{e}\right)h\left(x_{e}\right)=\lambda h\left(x_{e}\right),\ x_{e}\in I_{e}\,.\label{Diff_Eq_of_e}
\end{equation}
Let us define a real basis for the 2-dimensional solution space $\left\{ f_{e},g_{e}\right\} $,
with the initial conditions:
\begin{align}
f_{e}\left(0\right)=1,\  & f_{e}^{\prime}\left(0\right)=0\label{f_and_g}\\
g_{e}\left(0\right)=0,\  & g_{e}^{\prime}\left(0\right)=1\nonumber 
\end{align}
\end{notation}

Their Wronskian is
\begin{equation}
f_{e}\left(\ell_{e}\right)g_{e}^{\prime}\left(\ell_{e}\right)-f_{e}^{\prime}\left(\ell_{e}\right)g_{e}\left(\ell_{e}\right)=f_{e}\left(0\right)g_{e}^{\prime}\left(0\right)-f_{e}^{\prime}\left(0\right)g_{e}\left(0\right)=1\,.\label{Wronskian}
\end{equation}
Recall Notation~\ref{E+-}. If we know the value of $h \in \ker(\lambda - \mathcal{H}_{\Gamma})$ at a vertex $\upsilon$, and its outgoing derivative in the direction of an edge $e = (\upsilon, u) \in E^{-}_{\upsilon}$, then we can determine the value of $h$ at $u$ and its outgoing derivative at the same edge using the formula:
\begin{equation}
\begin{pmatrix}h\left(u\right)\\
h_{\check{e}}^{\prime}\left(u\right)
\end{pmatrix}=\begin{pmatrix}f_{e}\left(\ell_{e}\right) & g_{e}\left(\ell_{e}\right)\\
-f_{e}^{\prime}\left(\ell_{e}\right) & -g_{e}^{\prime}\left(\ell_{e}\right)
\end{pmatrix}\begin{pmatrix}h\left(\upsilon\right)\\
h_{e}^{\prime}\left(\upsilon\right)
\end{pmatrix},\label{Matrix_f_g}
\end{equation}
where we take $\check{e} = (u, \upsilon)$ to reverse the direction of the derivative, so that it becomes the outgoing derivative from $u$. By the continuity imposed by the $\delta$-type vertex conditions, the limit of $h$ at every $\upsilon \in V$ is the same along every edge $e \in E_{\upsilon}$. Clearly, the application of the matrix from (\ref{Matrix_f_g}) for $e$, followed by the one for $\check{e}$, should give the identity matrix. That is,
\begin{equation}
\begin{pmatrix}f_{\check{e}}\left(\ell_{e}\right) & g_{\check{e}}\left(\ell_{e}\right)\\
-f_{\check{e}}^{\prime}\left(\ell_{e}\right) & -g_{\check{e}}^{\prime}\left(\ell_{e}\right)
\end{pmatrix}\begin{pmatrix}f_{e}\left(\ell_{e}\right) & g_{e}\left(\ell_{e}\right)\\
-f_{e}^{\prime}\left(\ell_{e}\right) & -g_{e}^{\prime}\left(\ell_{e}\right)
\end{pmatrix}=\begin{pmatrix}1 & 0\\
0 & 1
\end{pmatrix}\,.
\end{equation}
We can solve these equations using (\ref{Wronskian}) to obtain the following identities (which will prove useful later):
\begin{align}
 & f_{e}\left(\ell_{e}\right)=g_{\check{e}}^{\prime}\left(\ell_{e}\right)\label{Adjustment_in_the_other_direction}\\
 & g_{e}\left(\ell_{e}\right)=g_{\check{e}}\left(\ell_{e}\right)\nonumber \\
 & f_{e}^{\prime}\left(\ell_{e}\right)=f_{\check{e}}^{\prime}\left(\ell_{e}\right).\nonumber 
\end{align}



We now define the derived graph and the operator $\gamma_{\lambda}$, which establish the connection between the eigenspace of $\lambda$ on the quantum graph and the eigenspace of $0$ on the derived graph.

\begin{definition}\label{Derived_Graph}
(An example is shown in Figure~\ref{fig:derived_graph}) Let $\Gamma=\left(V,E, \ell, \mathcal{H}_{\Gamma}\right)$ be a quantum graph with a Schr\"odinger type differential operator and $\delta$-type vertex conditions. For every $\lambda\in\mathbb{R}$ we define the \textit{derived graph} of $\Gamma$ corresponding to $\lambda$ to be the discrete graph $\Gamma_{disc,\lambda}=\left(V_{disc,\lambda},E_{disc,\lambda},a_{\lambda},b_{\lambda}\right)$ such that:

\begin{align}
V_{disc,\lambda}& =\bigcup_{\underset{\alpha_{\upsilon}<\infty}{\upsilon\in V}}\bigg(\left\{ \upsilon_{p}\right\} \cup\bigcup_{e\in E_{\upsilon}^{-}}\left\{ \upsilon_{e}\right\} \bigg)\cup\bigcup_{\underset{\alpha_{\upsilon}=\infty}{\upsilon\in V}}\bigcup_{e\in E_{\upsilon}^{-}}\left\{ \upsilon_{e}\right\} \label{V_disc}\\
E_{disc,\lambda}& =\bigcup_{\underset{\alpha_{\upsilon},\alpha_{u}<\infty}{e=\left(\upsilon,u\right)\in E}}\left\{ \left(\upsilon_{p},u_{p}\right),\left(\upsilon_{p},u_{\check{e}}\right),\left(\upsilon_{p},\upsilon_{e}\right),\left(\upsilon_{e},u_{\check{e}}\right)\right\} \cup\label{E_disc}\\
 & \bigcup_{\underset{\alpha_{\upsilon}<\infty,\alpha_{u}=\infty}{e=\left(\upsilon,u\right)\in E}}\left\{ \left(\upsilon_{p},u_{\check{e}}\right),\left(\upsilon_{p},\upsilon_{e}\right),\left(\upsilon_{e},u_{\check{e}}\right)\right\} \cup\bigcup_{\underset{\alpha_{\upsilon},\alpha_{u}=\infty}{e=\left(\upsilon,u\right)\in E}}\left\{ \left(\upsilon_{e},u_{\check{e}}\right)\right\}  \nonumber 
\end{align}

Thus, for each vertex $\upsilon \in \Gamma$ with non-Dirichlet boundary conditions, the graph $\Gamma_{disc, \lambda}$ has a copy of $\upsilon$ corresponding to the original vertex, as well as copies of $\upsilon$ for each outgoing edge. In the case of Dirichlet boundary conditions, the only copies of $\upsilon$ are the ones corresponding to the emanating edges. We call $\upsilon_{p}$ the \textit{principal vertex} of $\upsilon$
and call $\left\{ \upsilon_{e}\right\} _{e\in E_{\upsilon}^{-}}$ the \textit{shadow
vertices} of $\upsilon$. 


As for the edges, these echo the structure of the graph with connections between principal vertices (when they exist), principal vertices and shadow vertices, and among shadow vertices, all replicating those in $\Gamma$. In addition, we connect between every principal edge and its shadow edges.

For every $\upsilon\in V$ and every $e\in E_{\upsilon}$ the potential satisfies
\begin{equation}
b_{\upsilon_{i}}=\begin{cases}
2\alpha_{\upsilon} & i=p\\
0 & i\neq p
\end{cases}\,,\label{Potential_in_v}
\end{equation}
and for every $e=\left(u,\upsilon\right)\in E$,
\begin{equation} \label{Weight_in_v_u}
\begin{split}
a_{\left(\upsilon_{p},\upsilon_{\check{e}}\right)}&=a_{\left(\upsilon_{\check{e}},\upsilon_{p}\right)}=-1\\
  a_{\left(u_{p},\upsilon_{p}\right)}&=f_{e}^{\prime}\left(\ell_{e}\right) \\
  a_{\left(u_{e},\upsilon_{\check{e}}\right)}&=g_{e}\left(\ell_{e}\right) \\
  a_{\left(u_{p},\upsilon_{\check{e}}\right)}&=f_{e}\left(\ell_{e}\right) \\ 
  a_{\left(u_{e},\upsilon_{p}\right)}&=g_{e}^{\prime}\left(\ell_{e}\right).
\end{split}
\end{equation}

This gives us the Jacobi matrix 
\begin{equation}
\left(A_{\Gamma}\eta\right)\left(\upsilon_{j}\right)=\begin{cases}
2\alpha_{\upsilon}\eta\left(\upsilon_{p}\right)+\underset{e=\left(u,\upsilon\right)\in E_{\upsilon}^{+}}{\sum}\left(f_{e}^{\prime}\left(\ell_{e}\right)\eta\left(u_{p}\right)+g_{e}^{\prime}\left(\ell_{e}\right)\eta\left(u_{e}\right)-\eta\left(\upsilon_{\check{e}}\right)\right) & j=p\\
f_{e}\left(\ell_{e}\right)\eta\left(u_{p}\right)+g_{e}\left(\ell_{e}\right)\eta\left(u_{e}\right)-\eta\left(\upsilon_{p}\right) & j=\check{e}:e=\left(u,\upsilon\right)\in E_{\upsilon}^{+}
\end{cases}\label{A_Gamma}
\end{equation} 
This formula holds for $\alpha_{\upsilon}, \alpha_{u} < \infty$. In the case $\alpha_{\upsilon} = \infty$ ($\alpha_{u} = \infty$), replace $\eta(\upsilon_{p})$ $(\eta(u_{p}))$ by 0.
\end{definition}

Notice that the weights are real, and by~\eqref{Adjustment_in_the_other_direction}, they do not depend on the direction of the edge.

\begin{center}
    \begin{minipage}{0.35\textwidth}
        \centering
        \begin{tikzpicture}[scale=0.6, every node/.style={circle, draw}]
            \node[circle, draw=black, fill=red!60, minimum size=1cm] (A) at (0,2) {\textbf{A}};
            \node[circle, draw=black, fill=blue!60, minimum size=1cm] (B) at (2,0) {\textbf{B}};
            \node[circle, draw=black, fill=brown!60, minimum size=1cm] (C) at (-2,0) {\textbf{C}};
            \node[circle, draw=black, fill=yellow!60, minimum size=1cm] (D) at (1,-2.3) {\textbf{D}};

            \draw[line width=2pt] (A) -- (B);
            \draw[line width=2pt] (A) -- (C);
            \draw[line width=2pt] (A) -- (D);
            \draw[line width=2pt] (C) -- (B);
        \end{tikzpicture}
        \captionof*{figure}{Graph 1}
    \end{minipage}
    \hfill
    \begin{minipage}{0.6\textwidth}
        \centering
        \begin{tikzpicture}[scale=0.4, every node/.style={circle, draw}] 
            \node[circle, draw=black, line width=1mm, fill=red!60, minimum size=1cm] (A1) at (0,9) {$A_{p}$};
            \node[circle, draw=black, line width=1mm, fill=blue!60, minimum size=1cm] (B1) at (6,5.5) {$B_{p}$};
            \node[circle, draw=black, line width=1mm, fill=brown!60, minimum size=1cm] (C1) at (-6,5.5) {$C_{p}$};
            \node[circle, draw=black, line width=1mm, fill=yellow!60, minimum size=1cm] (D1) at (3.5,-4) {$D_{p}$};

            \node[circle, scale=0.65, draw, fill=red!60, minimum size=1cm] (A2) at (0,9-2*2.5) {$A_{(A,B)}$};
            \node[circle, scale=0.65, draw, fill=red!60, minimum size=1cm] (A3) at (0,9-3*2.5) {$A_{(A,C)}$};
            \node[circle, scale=0.65, draw, fill=red!60, minimum size=1cm] (A4) at (0,9-4*2.5) {$A_{(A,D)}$};
            \node[circle, scale=0.65, draw, fill=blue!60, minimum size=1cm] (B2) at (6,5.5-2*2.5) {$B_{(B,A)}$};
            \node[circle, scale=0.65, draw, fill=blue!60, minimum size=1cm] (B3) at (6,5.5-3*2.5) {$B_{(B,C)}$};
            \node[circle, scale=0.65, draw, fill=brown!60, minimum size=1cm] (C2) at (-6,5.5-2*2.5) {$C_{(C,A)}$};
            \node[circle, scale=0.65, draw, fill=brown!60, minimum size=1cm] (C3) at (-6,5.5-3*2.5) {$C_{(C,B)}$};
            \node[circle, scale=0.65, draw, fill=yellow!60, minimum size=1cm] (D2) at (3.5,-4-2*2.5) {$D_{(D,A)}$};

            \draw[-] (A1) -- (A2); \draw[-, bend right=30] (A1) to (A3); \draw[-, bend right=45] (A1) to (A4);
            \draw[-] (B1) -- (B2); \draw[-, bend left=30] (B1) to (B3);
            \draw[-] (C1) -- (C2); \draw[-, bend right=30] (C1) to (C3);
            \draw[-] (D1) -- (D2);
            \draw[-] (A1) -- (B1); \draw[-] (A1) -- (C1); \draw[-] (A1) -- (D1); \draw[-] (C1) -- (B1);
            \draw[-] (A2) -- (B1); \draw[-] (A2) -- (B2); \draw[-] (A3) -- (C1); \draw[-] (A3) -- (C2);
            \draw[-] (A4) -- (D1); \draw[-] (A4) -- (D2); \draw[-] (B2) -- (A1); \draw[-] (B2) -- (A2);
            \draw[-] (B3) -- (C1); \draw[-] (B3) -- (C3); \draw[-] (C2) -- (A1); \draw[-] (C2) -- (A3);
            \draw[-] (C3) -- (B1); \draw[-] (C3) -- (B3); \draw[-] (D2) -- (A1); \draw[-] (D2) -- (A4);
        \end{tikzpicture}
        \captionof*{figure}{Graph 2}
    \end{minipage}

    \vspace{1.5em}

    
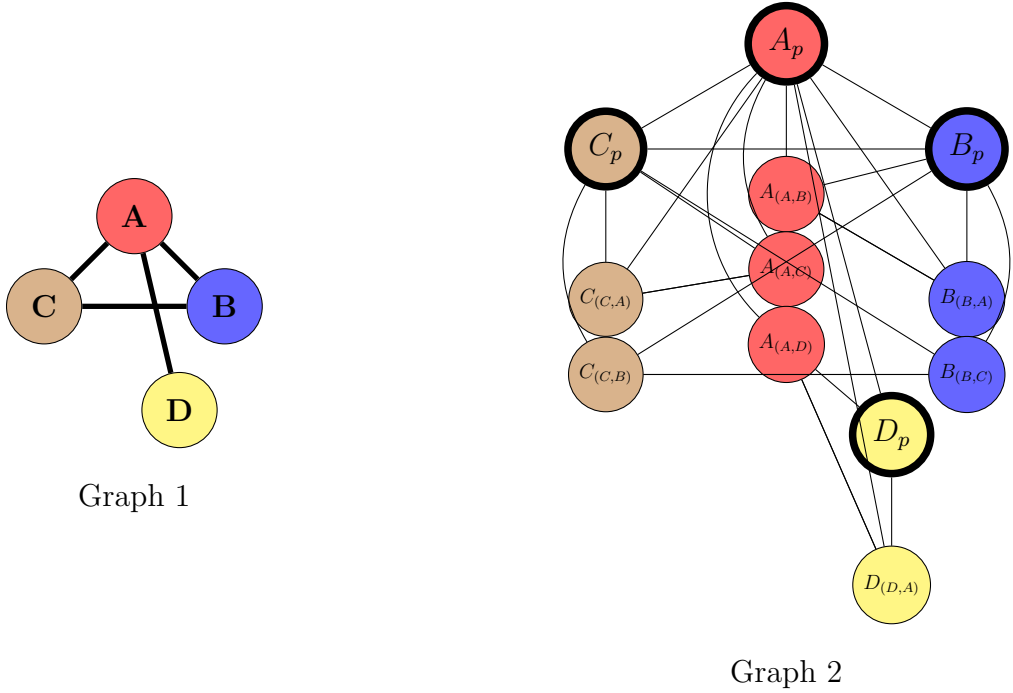
\captionof{figure}{Graph 1 is a quantum graph $\Gamma = (V, E, \ell, \mathcal{H}_{\Gamma})$ where $\alpha_{\upsilon} < \infty$ for each $\upsilon \in V$, and Graph 2 is its derived graph. For every $\upsilon \in V$, we separated between the principal vertex and the shadow vertices. Each vertex $\upsilon$ has $\deg(\upsilon)$ shadow vertices, each corresponding to a different edge incident to $\upsilon$ in $\Gamma$. Every shadow vertex is connected to the principal vertex of the neighboring vertex to which it was connected in $\Gamma$, and to the shadow vertex corresponding to the same edge with the opposite orientation. The principal vertices are also connected to one another if they were connected in $\Gamma$, and each principal vertex is connected to all of its corresponding shadow vertices.}
    \label{fig:derived_graph}
\end{center}

Now we would like to define a one-to-one linear operator $\gamma_{\lambda}:\ker\left(\lambda - \mathcal{H}\right) \hookrightarrow \ker\left(A_{\Gamma}\right)$, where for each \( h \in \ker\left(\lambda - \mathcal{H}\right) \) and each \( \upsilon \in V \),
\[
\gamma_{\lambda}(h)(\upsilon_{p}) = h(\upsilon),
\]
and for each \( e \in E_{\upsilon}^{-} \),
\[
\gamma_{\lambda}(h)(\upsilon_{e}) = h_{e}'(\upsilon),
\]
where \( h_{e}'(\upsilon) \) is the outgoing derivative of \( h \) at \( \upsilon \) in the direction of \( e \). Before proceeding, we must verify that this indeed defines an injective linear map into \( \ker(A) \), and that it preserves the finiteness of the norm.

\begin{lemma}\label{Gamma_is_Embedding}
Let $\Gamma = (V, E, \ell, \mathcal{H}_{\Gamma})$ be a quantum graph equipped with a Schr\"odinger operator and $\delta$-type vertex conditions. Assume that:
\begin{itemize}
  \item The edge lengths $\{ \ell_e \}_{e \in E}$ take values in a finite set,
  \item The potential functions $\{ W|_e \}_{e \in E}$ take values in a finite set.
\end{itemize}
Then for every eigenvalue $\lambda \in \mathbb{R}$, the map $\gamma_{\lambda}$ is an injective linear map from $\ker\left( \lambda - \mathcal{H} \right)$ to $\ker(A_{\Gamma})$.
\end{lemma}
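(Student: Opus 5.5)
The plan is to verify the three required properties of $\gamma_{\lambda}$ directly: linearity, injectivity, and that it lands in $\ker(A_{\Gamma})$ inside $\ell^2(V_{disc,\lambda})$. The last includes finiteness of the $\ell^2$ norm, which is only an issue when $\Gamma$ is infinite, such as the universal cover $\mathcal{T}$.

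\emph{Linearity and injectivity.} Linearity is immediate, since point evaluation and one-sided derivatives at vertices are linear in $h$. For injectivity, suppose $\gamma_{\lambda}(h)=0$ and fix an edge $e=(\upsilon,u)$. If $\alpha_{\upsilon}<\infty$, then $h(\upsilon)=\gamma_\lambda(h)(\upsilon_p)=0$. If $\alpha_{\upsilon}=\infty$, then $h(\upsilon)=0$ by the Dirichlet condition \eqref{Dirichlet}. In both cases $h'_e(\upsilon)=\gamma_\lambda(h)(\upsilon_e)=0$. So $h|_e$ solves \eqref{Diff_Eq_of_e} with zero initial data, hence vanishes by uniqueness for linear ODEs with continuous coefficients. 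Since $e$ was arbitrary, $h=0$.

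\emph{Membership in $\ker A_{\Gamma}$.} Set $\eta=\gamma_\lambda(h)$ and check \eqref{A_Gamma} row by row. Take a shadow vertex $\upsilon_{\check e}$ with $e=(u,\upsilon)\in E_\upsilon^+$. The first row of \eqref{Matrix_f_g} gives $h(\upsilon)=f_e(\ell_e)h(u)+g_e(\ell_e)h'_e(u)$, which is exactly the vanishing of that row. Here $h(u)$ or $h(\upsilon)$ is replaced by $0$ at Dirichlet vertices, consistent with the convention stated after \eqref{A_Gamma}.

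Now take a principal vertex $\upsilon_p$. The second row of \eqref{Matrix_f_g} gives $f'_e(\ell_e)h(u)+g'_e(\ell_e)h'_e(u)=-h'_{\check e}(\upsilon)$. Hence each summand in \eqref{A_Gamma} equals $-2h'_{\check e}(\upsilon)$. Summing over incoming edges and using the $\delta$-condition \eqref{Delta_Type}, we get $2\alpha_\upsilon h(\upsilon)-2\sum_{e\in E^-_\upsilon}h'_e(\upsilon)=0$. No row is indexed by a Dirichlet principal vertex, since such vertices do not exist in $V_{disc,\lambda}$.

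\emph{Square-summability (main obstacle).} We need $\sum_\upsilon|h(\upsilon)|^2+\sum_{e}|h'_e(\upsilon)|^2\le C\|h\|^2_{L_2}$, with $C$ independent of the (possibly infinitely many) edges.

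For a fixed edge type $(\ell_e,W|_e)$, consider the linear map sending initial data $(h(0),h'(0))\in\mathbb{C}^2$ to the corresponding solution in $L_2(I_e)$. It is injective on a $2$-dimensional space, so by equivalence of norms $|h(0)|^2+|h'(0)|^2\le C_e\|h\|^2_{L_2(e)}$ for every solution $h$ of \eqref{Diff_Eq_of_e}. The same holds at the other endpoint, via the transfer matrix \eqref{Matrix_f_g}. Because edge lengths and potentials take only finitely many values, $C=\max_e C_e<\infty$.

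We then charge each shadow vertex $\upsilon_e$ to the edge $e$, and each principal vertex $\upsilon_p$ to one chosen incident edge. Each edge is charged at most four times: two shadow vertices and at most two principal vertices. This gives $\|\gamma_\lambda(h)\|^2_{\ell^2}\le 4C\|h\|^2$. The only delicate point is ensuring this constant is uniform over the edges, which is exactly where the finiteness hypotheses of the lemma enter.
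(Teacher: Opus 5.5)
Your proposal is correct. The check that $\gamma_\lambda(h)\in\ker(A_\Gamma)$ is the same as in the paper. The shadow rows are just the first row of \eqref{Matrix_f_g}. In the principal rows, the second row of \eqref{Matrix_f_g} turns each summand into $-2h'_{\check e}(\upsilon)$, and the potential $2\alpha_\upsilon$ cancels this via \eqref{Delta_Type}.

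You depart from the paper in two places: the $\ell^2$ bound and injectivity.

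\textbf{The $\ell^2$ bound.} The paper fixes a short length $\ell_1<\tfrac12\min_e\ell_e$ on which $f_e,g_e,f_e',g_e'$ stay uniformly within $\tfrac1{10}$ of their initial values. This gives the explicit estimate $\tfrac{59\ell_1}{100}\|\gamma_\lambda(h)\|^2\le\|h\|^2_{L_2}+\|h'\|^2_{L_2}$, where each endpoint is handled with its own outgoing parametrisation. The constant is explicit, but the argument uses $h'\in L_2$ (available since $h\in\bigoplus_e H^2(e)$). Your argument uses equivalence of norms on the two-dimensional solution space of \eqref{Diff_Eq_of_e}, moved to the far endpoint by the transfer matrix, together with the charging scheme. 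It is shorter, controls everything by $\|h\|_{L_2}$ alone, and yields a non-explicit constant. The finiteness hypotheses enter in the same way in both routes: finitely many edge types for the fixed $\lambda$.

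\textbf{Injectivity.} The paper's proof never actually checks injectivity. Your ODE-uniqueness argument supplies that step: the value at $\upsilon$ comes from $\upsilon_p$ or from the Dirichlet condition, and the outgoing derivative comes from $\upsilon_e$.
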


\begin{proof}
First, we want to prove that the operator preserves the finiteness of the norm. That is, we want to prove that
\[
\sum_{\underset{\alpha_{\upsilon}<\infty}{\upsilon\in V}}\left(\left|\gamma_{\lambda}\left(h\right)\left(\upsilon_{p}\right)\right|^{2}+\sum_{e\in E_{\upsilon}^{-}}\left|\gamma_{\lambda}\left(h\right)\left(\upsilon_{e}\right)\right|^{2}\right)+\sum_{\underset{\alpha_{\upsilon}=\infty}{\upsilon\in V}}\sum_{e\in E_{\upsilon}^{-}}\left|\gamma_{\lambda}\left(h\right)\left(\upsilon_{e}\right)\right|^{2}<\infty\,.
\]
We assume that $h \in \ker(\lambda - \mathcal{H})$ satisfies $\| h \|_{L^{2}(\Gamma)} < \infty$ and $\| h' \|_{L^{2}(\Gamma)} < \infty$. According to~\eqref{f_and_g}, the function $h_e$ on each edge $e$ admits the representation
\[
h_{e}(x) = h_{e}(0) f_{e}(x) + h_{e}'(0) g_{e}(x), \qquad x \in [0, \ell_{e}]\,.
\]
Since the functions $f_e$ and $g_e$ are continuous, there exists a constant $\ell_{1} > 0$, which may be chosen such that $\ell_{1} < \frac{1}{2} \min_{e \in E} \ell_e$, and such that for all $e \in E$ and all $x \in [0, \ell_{1}]$, the following uniform estimate holds:
\[
\max \left\{ \left| f_{e}(x) - 1 \right|, \left| f_{e}'(x) \right|, \left| g_{e}(x) \right|, \left| g_{e}'(x) - 1 \right| \right\} < \frac{1}{10}\,.
\]
Using
\begin{equation*}
2\textrm{Re}\left(a\overline{b}\right)\leq\left|a\right|^{2}+\left|b\right|^{2},\ \  \forall a,b\in\mathbb{C}
\end{equation*}
we get
\begin{align*}
 & \int_{0}^{\ell_{1}}\left|h_{e}\left(x\right)\right|^{2}dx=\int_{0}^{\ell_{1}}\left|h_{e}\left(0\right)\right|^{2}\left(f_{e}\left(x\right)\right)^{2}+2\Re\left(h_{e}\left(0\right)\overline{h_{e}^{\prime}\left(0\right)}\right)f_{e}\left(x\right)g_{e}\left(x\right)+\left|h_{e}^{\prime}\left(0\right)\right|^{2}\left(g_{e}\left(x\right)\right)^{2}dx\geq\\
 & \geq\int_{0}^{\ell_{1}}\left|h_{e}\left(0\right)\right|^{2}\left(\frac{9}{10}\right)^{2}-\left(\left|h_{e}\left(0\right)\right|^{2}+\left|h_{e}^{\prime}\left(0\right)\right|^{2}\right)\frac{11}{10}\frac{1}{10}dx=\frac{\ell_{1}}{100}\left(70\left|h_{e}\left(0\right)\right|^{2}-11\left|h_{e}^{\prime}\left(0\right)\right|^{2}\right)\,,
\end{align*}
and in the same way
\[
\int_{0}^{\ell_{1}}\left|h_{e}^{\prime}\left(x\right)\right|^{2}dx\geq\frac{\ell_{1}}{100}\left(70\left|h_{e}^{\prime}\left(0\right)\right|^{2}-11\left|h_{e}\left(0\right)\right|^{2}\right)\,.
\]
So
\[
\int_{0}^{\ell_{1}}\left|h_{e}\left(x\right)\right|^{2}dx+\int_{0}^{\ell_{1}}\left|h_{e}^{\prime}\left(x\right)\right|^{2}dx\geq\frac{59\ell_{1}}{100}\left(\left|h_{e}\left(0\right)\right|^{2}+\left|h_{e}^{\prime}\left(0\right)\right|^{2}\right)\,.
\]
Summing the contributions over all relevant edge neighborhoods, we obtain the estimate
\[
\frac{59\ell_{1}}{100}\left\Vert \gamma_{\lambda}\left(h\right)\right\Vert _{\ell_{2}\left(\Gamma_{disc,\lambda}\right)}^{2}\leq\left\Vert h\right\Vert _{L_{2}\left(\Gamma\right)}^{2}+\left\Vert h^{\prime}\right\Vert _{L_{2}\left(\Gamma\right)}^{2}<\infty\,.
\]

Now it remains to prove that $\text{Im}(\gamma_{\lambda}) \subseteq \ker\left(A_\Gamma \right)$. From~\eqref{Matrix_f_g}, every eigenfunction $h \in \ker\left(\lambda - \mathcal{H}\right)$ and every pair $\upsilon \sim u$, with $e = (u, \upsilon)$, satisfy
\[
\gamma_{\lambda}(h)(\upsilon_p) = f_{e}(\ell_e) \cdot \gamma_{\lambda}(h)(u_p) + g_{e}(\ell_e) \cdot \gamma_{\lambda}(h)(u_e)\,.
\]
Therefore,
\[
\left(A_{\Gamma} \gamma_{\lambda}(h)\right)(\upsilon_{\check{e}}) = 0
\quad \text{for every } \upsilon \in V \text{ and every } e \in E_{\upsilon}^{+}\,.
\]

Moreover, from the vertex conditions~\eqref{Delta_Type} we have:
\[
\alpha_{\upsilon} \cdot \gamma_{\lambda}(h)(\upsilon_p)
= \sum_{e = (u,\upsilon) \in E_{\upsilon}^{+}} \gamma_{\lambda}(h)(\upsilon_{\check{e}})
= -\sum_{e = (u,\upsilon) \in E_{\upsilon}^{+}} \left[
f_{e}'(\ell_e) \cdot \gamma_{\lambda}(h)(u_p)
+ g_{e}'(\ell_e) \cdot \gamma_{\lambda}(h)(u_e)
\right]\,.
\]
Hence
\[
\left(A_{\Gamma} \gamma_{\lambda}(h)\right)(\upsilon_p) = 0
\quad \text{for every } \upsilon \in V\,.
\]
\end{proof}

The following Lemma plays a central role in our analysis through the proof of Theorem \ref{Q-Aomoto_first_theorem}. Its proof utilizes the map $\gamma_{\lambda}$.

\begin{lemma}\label{Lemma_From_Derived_Graph}
Let $\mathcal{T} = (V, E, \ell, \mathcal{H})$ be a connected quantum tree equipped with a Schr\"odinger-type operator $\mathcal{H}$ and $\delta$-type vertex conditions on all vertices, such that $\alpha_{\upsilon} < \infty$ for every vertex $\upsilon$ with $\deg(\upsilon) > 1$. Denote by $\partial_{\infty} \mathcal{T}$ the set of vertices $\upsilon$ for which $\alpha_{\upsilon} = \infty$ $($so $\partial_{\infty} \mathcal{T} \subseteq \left\{ \upsilon \in V \mid \deg(\upsilon) = 1 \right\})$.

Suppose $\lambda \in \sigma_{\mathfrak{p}}(\mathcal{H})$ and that there exists an eigenfunction $h \in \ker(\lambda - \mathcal{H})$ such that $h(\upsilon) \neq 0$ for all $\upsilon \in V \setminus \partial_{\infty} \mathcal{T}$. Then the following statements hold:
\begin{enumerate}[label=\textnormal{(\roman*)}]
    \item The set $\left\{ f|_{V} \mid f \in \ker(\lambda - \mathcal{H}) \right\}$ is a one-dimensional subspace of $\ell^{2}(V)$.
    \item If $\mathcal{T}$ is finite, then $\dim\left(\ker(\lambda - \mathcal{H})\right) = 1$.
\end{enumerate}
\end{lemma}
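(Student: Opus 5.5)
Given a second eigenfunction $k \in \ker(\lambda - \mathcal{H})$, I will produce a scalar $c$ with $k|_V = c\, h|_V$. The key fact is that on a tree, an eigenfunction is determined by its value and outgoing derivative at a single vertex, propagated along edges via \eqref{Matrix_f_g} and the vertex conditions \eqref{Delta_Type}, provided values do not vanish. I will work through the Wronskian-type quantity attached to each directed edge, or equivalently through the derived graph and $\gamma_\lambda$ of Lemma \ref{Gamma_is_Embedding}. Since only vertex values matter for (i), a direct transfer-matrix argument seems simpler.

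Fix an edge $e=(\upsilon,u)$ and set $w_e = h(\upsilon)k'_e(\upsilon) - k(\upsilon)h'_e(\upsilon)$. Since both functions solve \eqref{Diff_Eq_of_e} on $e$, their Wronskian is constant along the edge. Read from the other endpoint with the outgoing derivative, this gives $w_{\check e} = -w_e$. At a vertex $\upsilon$ with $\alpha_\upsilon<\infty$, multiply the delta condition for $k$ by $h(\upsilon)$ and the one for $h$ by $k(\upsilon)$, then subtract. This gives $\sum_{e\in E^-_\upsilon} w_e = 0$, so the Wronskian is a conserved current (Kirchhoff flow) on the tree. At a Dirichlet leaf, both functions vanish there, so $w_e=0$ on the leaf edge. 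Next I will show all $w_e=0$. In the finite case, take the leaves: at a non-Dirichlet leaf of degree one the current condition gives $w_e=0$. Inductively, pruning leaves shows that a conserved flow on a finite tree with zero boundary outflow vanishes.

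For an infinite tree the pruning argument fails, and this is the main obstacle. There I will use square-summability. The current through an edge equals the total current flowing into the subtree beyond it. Since the tree is locally finite with edge lengths bounded below by $\delta$, the $\ell^2$ bound from Lemma \ref{Gamma_is_Embedding} on values and derivatives gives $\sum_e |w_e| \le \sum_e (|h||k'|+|k||h'|) < \infty$ by Cauchy--Schwarz. A nonzero current on edge $e_0$ must, by conservation, pass through every sphere of the subtree beyond $e_0$ with total magnitude at least $|w_{e_0}|$. Summing over infinitely many disjoint spheres contradicts summability. (For finite subtrees, the flow terminates at leaves with zero outflow, contradicting $w_{e_0}\ne 0$ as in the finite case.) Hence $w_e=0$ for all $e$.

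With $w\equiv 0$, at each vertex $\upsilon\notin\partial_\infty\mathcal{T}$ we have $h(\upsilon)\neq0$, hence $k'_e(\upsilon) = \frac{k(\upsilon)}{h(\upsilon)}h'_e(\upsilon)$ for every outgoing $e$. So $k_e - \frac{k(\upsilon)}{h(\upsilon)}h_e$ has zero Cauchy data at $\upsilon$ and vanishes on the whole edge $e$. Thus the ratio $k/h$ is the same constant at both endpoints of every edge whose endpoints are both non-Dirichlet. The only other edges are those ending at a Dirichlet leaf, where $k = c h$ on the edge by the same argument from the interior endpoint. By connectedness $k = c\,h$ on every edge. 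This yields (i), and in fact gives $k=ch$ globally. So (ii) follows as well, even without finiteness. I would present (ii) as the finite-case corollary that needs only the pruning argument.
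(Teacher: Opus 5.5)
Your argument is correct, and it reaches the lemma by a more direct route than the paper.

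\textbf{What the paper does.} The paper never writes the Wronskian explicitly. It passes to the derived graph, maps $h,\tilde h$ to $\eta=\gamma_\lambda(h)$ and $\zeta=\gamma_\lambda(\tilde h)\in\ker A_{\mathcal T}$ via Lemma~\ref{Gamma_is_Embedding}, and uses self-adjointness of the Jacobi matrix in the form $\langle \zeta, A_{\mathcal T}(\eta|_{T_{\upsilon,e}})\rangle=\langle A_{\mathcal T}\zeta,\eta|_{T_{\upsilon,e}}\rangle=0$.

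Unwinding the boundary terms with \eqref{Matrix_f_g}, this identity is, up to a constant factor, exactly the vanishing of your current $w$ on the edge leaving $\upsilon$ toward $u$. The ``flux at infinity'' is disposed of implicitly by boundedness of $A_{\mathcal T}$ on $\ell^2$, rather than by your explicit flux-through-spheres estimate.

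The paper then propagates $\zeta(\upsilon_p)=c\,\eta(\upsilon_p)$ outward from a root, with a case split on whether $g_e(\ell_e)=0$. For (ii) it needs a separate leaf-pruning argument on the components spanned by edges with $g_e(\ell_e)=0$, and that is where finiteness enters.

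\textbf{What your route buys.} You treat the Wronskian as a conserved current, show it vanishes globally, and then conclude edge by edge from Cauchy uniqueness. This avoids the derived graph and the case split altogether. As you observe, it also yields $\dim\ker(\lambda-\mathcal H)=1$ for infinite trees, which is a genuine strengthening of (ii).

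\textbf{Two minor points.}
\begin{enumerate}
\item The bound $\sum_e|w_e|<\infty$ needs the same uniformity the paper uses tacitly. Lemma~\ref{Gamma_is_Embedding} assumes finitely many edge lengths and potentials, not merely $\ell_e>\delta$. It is cleanest to sum the per-edge trace estimate from that lemma's proof over directed edges, so that no degree bound is needed.
\item The degenerate tree consisting of a single edge with Dirichlet conditions at both ends has no interior endpoint, so your propagation step does not apply there. In that case (ii) must be checked directly: every solution is a multiple of $g_e$. Statement (i) degenerates there, since the restriction to $V$ is zero. The paper singles out this case too.
\end{enumerate}
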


\begin{proof}[Proof]
Let us take the derived graph $\mathcal{T}_{disc,\lambda}=\left(V_{disc,\lambda},E_{disc,\lambda},a_{\lambda},b_{\lambda}\right)$
with Jacobi operator $A_{\mathcal{T}}$, choose some root $r\in V\backslash\partial_\infty\mathcal{T}$ 
and denote $\eta=\gamma_{\lambda}\left(h\right)$.

Let $\tilde{h}\in\ker\left(\lambda-\mathcal{H}\right)$, denote $\zeta=\gamma_{\lambda}\left(\tilde{h}\right)$
and consider $c=\frac{\zeta(r_{p})}{\eta(r_{p})}$. We will prove
by induction on the distance from $r$, that $\zeta\left(\upsilon_{p}\right)=c\eta\left(\upsilon_{p}\right)$
for each $\upsilon\in V$, showing (i). We shall then show that if in addition $\mathcal{T}$ is finite, then
$\zeta\left(u\right)=c\eta\left(u\right)$ for each $u\in V_{disc,\lambda}$. By the injectivity of the mapping $\gamma_\lambda$, this will imply (ii).

Assume that $\zeta\left(\upsilon_{p}\right)=c\eta\left(\upsilon_{p}\right)$
and denote $e=\left(u,\upsilon\right)$ for $u\sim\upsilon$, where $\upsilon$ is the edge closer to $r$ in the graph. If $g_{e}\left(\ell_{e}\right)=0$,
then by the assumption and (\ref{Matrix_f_g}),
\[
cf_{e}\left(\ell_{e}\right)\eta\left(u_{p}\right)=c\eta\left(\upsilon_{p}\right)=\zeta\left(\upsilon_{p}\right)=f_{e}\left(\ell_{e}\right)\zeta\left(u_{p}\right)\,.
\]
In this case, by (\ref{Wronskian}) we know that $f_{e}\left(\ell_{e}\right)\neq0$,
so $\zeta\left(u_{p}\right)=c\eta\left(u_{p}\right)$.

Assume that $g_{e}\left(\ell_{e}\right)\neq0$. Consider the subtree $\Gamma$ of $\mathcal{T}$ obtained by deleting all edges emanating from $\upsilon$ except for $e$ (connecting it to $u$) and keeping the connected component that contains $\upsilon$ and $u$. Note that in the derived graph of $\Gamma$ $\upsilon$ has only one shadow vertex - $\upsilon_{\check{e}}$. Let $T_{\upsilon,e}$ be the graph obtained from this derived graph by removing $\upsilon_{p}$ (and the edges emanating from it), but keeping $\upsilon_{\check{e}}$. This is a subgraph of $\mathcal{T}_{disc,\lambda}$.

Then, because $A_{\mathcal{T}}$ is self-adjoint we have
\begin{equation}
\left\langle \zeta,A_{\mathcal{T}}\left(\eta|_{T_{\upsilon,e}}\right)\right\rangle =\left\langle A_{\mathcal{T}}\zeta,\eta|_{T_{\upsilon,e}}\right\rangle =0\,.\label{Inner_Product}
\end{equation}
Since $A_{\mathcal{T}}\eta = 0$ by Lemma \ref{Gamma_is_Embedding}, we obtain the formula
\begin{align}
& A_{\mathcal{T}}\left(\eta|_{T_{\upsilon,e}}\right)=\left[-\eta\left(\upsilon_{\check{e}}\right)+f_{e}^{\prime}\left(\ell_{e}\right)\eta\left(u_{p}\right)+g_{e}^{\prime}\left(\ell_{e}\right)\eta\left(u_{e}\right)\right]\delta_{\upsilon_{p}}+\label{A_Red_Func}\\
& +\eta\left(\upsilon_{p}\right)\delta_{\upsilon_{\check{e}}}-f_{e}^{\prime}\left(\ell_{e}\right)\eta\left(\upsilon_{p}\right)\delta_{u_{p}}-g_{e}^{\prime}\left(\ell_{e}\right)\eta\left(\upsilon_{p}\right)\delta_{u_{e}}\,, \nonumber 
\end{align}
where we used (\ref{Adjustment_in_the_other_direction}). Plug (\ref{A_Red_Func}) into (\ref{Inner_Product}), and because $\eta\left(\upsilon_{p}\right)\neq0$
we deduce that
\begin{equation}
\left[\zeta\left(\upsilon_{\check{e}}\right)-c\eta\left(\upsilon_{\check{e}}\right)\right]-f_{e}^{\prime}\left(\ell_{e}\right)\left[\zeta\left(u_{p}\right)-c\eta\left(u_{p}\right)\right]+g_{e}^{\prime}\left(\ell_{e}\right)\left[c\eta\left(u_{e}\right)-\zeta\left(u_{e}\right)\right]=0\,.\label{mid_calculation}
\end{equation}
Now, use the equations $\left[A_{\mathcal{T}}\zeta\right]\left(\upsilon_{\check{e}}\right) = \left[A_{\mathcal{T}}\zeta\right]\left(u_{e}\right) = 0$ and (\ref{Adjustment_in_the_other_direction}) to write $$\zeta\left(\upsilon_{\check{e}}\right)=\frac{1}{g_e(\ell_e)}\left(\zeta\left(u_p \right)-g_e'(\ell_e)\zeta\left(\upsilon_p \right) \right),$$ 
and 
$$\zeta\left(u_{e}\right)=\frac{1}{g_e(\ell_e)}\left(\zeta\left(\upsilon_p\right)-f_e(\ell_e)\zeta\left(u_p \right) \right).$$ 
Substitute these into (\ref{mid_calculation}), and apply (\ref{Wronskian}) to obtain:
\begin{equation}
\zeta\left(u_{p}\right) = \frac{c g_{e}(\ell_{e})}{2} \left[\frac{2 g'_{e}(\ell_{e})}{g_{e}(\ell_{e})} \eta(\upsilon_{p}) + \eta(\upsilon_{\check{e}}) - f_{e}'(\ell_{e}) \eta(u_{p}) - g'_{e}(\ell_{e}) \eta(u_{e}) \right].\label{mid_calculation_2}
\end{equation}
Next, use $\left[A_{\mathcal{T}}\eta\right]\left(u_{e}\right) = 0$ and $\left[A_{\mathcal{T}}\eta\right]\left(\upsilon_{\check{e}}\right) = 0$ to derive similar expressions for $\eta\left(\upsilon_{\check{e}}\right)$ and $\eta\left(\upsilon_{p}\right)$. Substitute these into (\ref{mid_calculation_2}) to conclude that $\zeta\left(u_{p}\right) = c\eta\left(u_{p}\right)$, thus completing the proof of (i).

For (ii), if $g_{e}\left(\ell_{e}\right)\neq0$, insert $\zeta\left(u_{p}\right) = c\eta\left(u_{p}\right)$ into the formulas we have found for $\zeta(\upsilon_{\check{e}})$ and $\eta(\upsilon_{\check{e}})$ to obtain $\zeta(\upsilon_{\check{e}}) = c \eta(\upsilon_{\check{e}})$. Finally, using (\ref{A_Gamma}) for $\left[A_{\mathcal{T}}\eta\right]\left(\upsilon_{\check{e}}\right)$ and the formula for $\zeta(u_{e})$, we deduce that $\zeta(u_{e}) = c \eta(u_{e})$.

We have thus completed the proof for every edge $e \in E$ for which $g_{e}(\ell_{e}) \neq 0$. Note that, by (\ref{Adjustment_in_the_other_direction}), this property does not depend on the direction of the edge.

If $\mathcal{T}$ is finite, consider the subgraph $G$ spanned by the edges $e \in E$ such that $g_{e}(\ell_{e}) = 0$ and let $T$ be a connected component of $G$. Suppose one of the leaves (i.e.\ vertices of degree one), $w$, of this subgraph has Dirichlet conditions (so, in particular, it is also a leaf of $\mathcal{T}$) and suppose that $w$ is connected to the rest of the subgraph $G$ through the vertex $u$. Denote $e = \left(w,u\right)$ (where $\deg\left(w\right) = 1$ and $u$ is the only vertex connected to $w$). From (\ref{A_Gamma}), because  $g_{e}(\ell_{e}) = 0$, $\eta(w_{p}) \neq 0$ if and only if $\eta(u_{p}) \neq 0$. Therefore $\eta(u_{p}) = 0$, and if $\alpha_{u} < \infty$ we have a contradiction to our assumption about $\eta$. If $\alpha_{u} = \infty$, then there are only two vertices and one edge in $\mathcal{T}$, and the claim is automatically true since $\mathcal{T}$ is an interval in this case.

Thus, we may assume that $\alpha_{u} < \infty$ for every vertex $u$ of $T$. Now, again for a leaf, $w$, of $T$,  let $e = \left(w,u\right)$ be the unique edge connecting $w$ to the rest of $T$. Note that for any edge $e'\neq e$ emanating from $w$ in $\mathcal{T}$ (if such an edge exists) we have that $\zeta\left(w_{e'}\right)=c\eta\left(w_{e'}\right)$. In addition $\zeta\left(w_p\right)=c\eta\left(w_p\right)$.
It now follows from the vertex condition given by $\alpha_w<\infty$ that  $\zeta\left(w_{e}\right)=c\eta\left(w_{e}\right)$. By following $h$ and $\tilde{h}$ along the edge $\check{e}$, we get that $\tilde{h}'(u)=ch'(u)$, namely $\zeta\left(u_{\check{e}}\right)=c\eta\left(u_{\check{e}}\right)$. We can repeat this procedure starting from all leaves and then remove the leaves from $T$. This will give us a new finite tree, so it has leaves. Repeating this procedure will eventually exhaust the finite graph. Thus we are done.
\end{proof}

The following theorem, which is of independent interest, presents bounds on the DOS. We shall use its corollary below.

\begin{theorem}\label{DOS_bounds}
Let $\Gamma = \left(V,E, \ell, \mathcal{H}_{\Gamma}\right)$ be a compact quantum graph with a Schr\"odinger-type differential operator and $\delta$-type vertex conditions, and let $\mathcal{T} = (\mathcal{V}, \mathcal{E}, \ell_{\mathcal{T}}, \mathcal{H}_{\mathcal{T}})$ be its universal cover. Let $\mu$ be the density of states associated with $\mathcal{T}$ and denote
\[
W_{\max} = \max_{x \in \Gamma} W(x), \quad 
W_{e,\max} = \max_{x \in I_{e}} W(x), \quad 
W_{e,\min} = \min_{x \in I_{e}} W(x), \quad 
V^{\circ} = \left\{ \upsilon \in V \mid \alpha_{\upsilon} < \infty \right\}\,,
\]
where we refer to $V^{\circ}$ as the \textit{interior} of $\Gamma$. Then:

\begin{enumerate}[label=\roman*.]
\item For every $x > W_{\max}$,
\begin{align}
&  \max\left\{ \frac{\sum_{e\in E}\ell_{e}\sqrt{x-W_{e,\max}}}{\pi\mathcal{L}_{E}}-\frac{\left|E\right|}{\mathcal{L}_{E}},0\right\}\leq\mu\bigl(\left(-\infty,x\right)\bigl)\label{Bound_commulative}\\
 & 
 \leq\mu\bigl(\left(-\infty,x\right]\bigl)\leq\frac{\sum_{e\in E}\ell_{e}\sqrt{x-W_{e,\min}}}{\pi\mathcal{L}_{E}}+\frac{\left|V^{\circ}\right|}{\mathcal{L}_{E}}\,.\nonumber
\end{align}
In particular, the following Weyl-type asymptotics hold
\begin{equation}\label{Asymptote_commulative}
\mu\bigl((-\infty, x]\bigl) = \frac{\sqrt{x}}{\pi}+O\left(1\right)\qquad x \to \infty.
\end{equation}

\item For every $W_{\max} < a \leq b < \infty$,
\begin{align}
 & \max\left\{ \frac{\sum_{e\in E}\ell_{e}\left(\sqrt{b-W_{e,\max}}-\sqrt{a-W_{e,\min}}\right)}{\pi}-\frac{\left|E\right|+\left|V^{\circ}\right|}{\mathcal{L}_{E}},0\right\} \leq\mu\bigl(\left(a,b\right)\bigl)\label{Bound_Segments}\\
 & \leq\mu\bigl(\left[a,b\right]\bigl)\leq\frac{\sum_{e\in E}\ell_{e}\left(\sqrt{b-W_{e,\min}}-\sqrt{a-W_{e,\max}}\right)}{\pi\mathcal{L}_{E}}+\frac{\left|E\right|+\left|V^{\circ}\right|}{\mathcal{L}_{E}}\,.\nonumber 
\end{align}
\end{enumerate}
\end{theorem}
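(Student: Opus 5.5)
The plan is to pass to the finite covers $\Gamma_n$ of Theorem~\ref{ESM_limit} and bound their eigenvalue counting functions by Dirichlet--Neumann bracketing combined with Sturm comparison on each edge. Vague convergence $\nu_n\to\mu$ then transfers the bounds to $\mu$. Here $\nu_n = \frac{1}{n\mathcal{L}_E}\sum\delta_\lambda$ for an $n$-cover $\Gamma_n$, which has $n|E|$ edges, $n|V^{\circ}|$ interior vertices and total length $n\mathcal{L}_E$. So it suffices to prove, for any finite cover $G$ with counting function $N_G(x)=\#\{\lambda\le x\}$ (or $<x$), estimates of the form
\[
\sum_{e\in E(G)}\Bigl(\tfrac{\ell_e\sqrt{x-W_{e,\max}}}{\pi}-1\Bigr)\le N_G(x)\le \sum_{e\in E(G)}\tfrac{\ell_e\sqrt{x-W_{e,\min}}}{\pi}+|V^{\circ}(G)|.
\]
Dividing by $n\mathcal{L}_E$, all per-edge and per-vertex terms scale by $n$, which yields exactly the constants in \eqref{Bound_commulative}.

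\textbf{Lower bound.} Imposing Dirichlet conditions at every vertex is a form restriction: the form domain shrinks to $\bigoplus_e H^1_0(e)$, and the form $\int|f'|^2+W|f|^2$ (plus $\sum\alpha_\upsilon|f(\upsilon)|^2$) agrees with the original on this subspace. By min--max, $N_G(x)\ge N_{D}(x)=\sum_e N_{D,e}(x)$. On a single edge, Sturm comparison of $-f''+Wf$ with the constant potential $W_{e,\max}$ gives at least $\lfloor \ell_e\sqrt{x-W_{e,\max}}/\pi\rfloor \ge \ell_e\sqrt{x-W_{e,\max}}/\pi-1$ Dirichlet eigenvalues strictly below $x$. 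Together with $N\ge0$, this gives the left side of \eqref{Bound_commulative}.

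\textbf{Upper bound.} Here I impose Dirichlet conditions only at the vertices in $V^{\circ}$. The domain change is the kernel of the $|V^{\circ}(G)|$ point evaluations, a subspace of codimension at most $|V^{\circ}(G)|$ in the form domain. Min--max then gives $N_G(x)\le N_D(x)+|V^{\circ}(G)|$. Vertices that are already Dirichlet stay decoupled and cost nothing. Per edge, comparison with $W_{e,\min}$ bounds the Dirichlet count by $\#\{k\ge1: k^2\pi^2/\ell_e^2+W_{e,\min}\le x\}\le \ell_e\sqrt{x-W_{e,\min}}/\pi$. This avoids any Neumann decoupling, which is why no $|E|$ term appears on the right.

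The Weyl asymptotics \eqref{Asymptote_commulative} follow because $\sqrt{x-c}=\sqrt{x}+O(1/\sqrt x)$, so both sides equal $\sqrt x/\pi+O(1)$.

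For part (ii), write $\mu((a,b))=\mu((-\infty,b))-\mu((-\infty,a])$ and $\mu([a,b])=\mu((-\infty,b])-\mu((-\infty,a))$, then combine the two bounds from (i). This produces the error $(|E|+|V^{\circ}|)/\mathcal{L}_E$. The lower bound in \eqref{Bound_Segments} as printed lacks the factor $1/\mathcal{L}_E$ on the main term; the argument naturally yields the version with $\mathcal{L}_E$ in the denominator, which is what I will prove.

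The main obstacle is the passage to the limit, since vague convergence only controls integrals of $C_c$ functions and not half-lines. I will handle it with two devices.
\begin{itemize}
\item Spectra are bounded below uniformly, since the form is bounded below by a constant depending only on $\Gamma$. The half-line can therefore be replaced by a compact interval $[c_0,x]$.
\item I sandwich indicators of intervals between continuous compactly supported functions, e.g. $\mathbf 1_{[c_0,x-\epsilon]}\le\phi\le\mathbf 1_{[c_0-1,x)}$. Applying the finite-cover bounds at $x$ and at $x\pm\epsilon$ and letting $\epsilon\to0$ then uses continuity of the right-hand sides in $x$.
\end{itemize}
Strict versus non-strict inequalities are handled by approximating the open endpoint from inside and the closed endpoint from outside. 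A secondary check is that the $\delta$-coupling terms $\alpha_\upsilon|f(\upsilon)|^2$ vanish on the Dirichlet-restricted domain, so they do not affect either bracketing step.
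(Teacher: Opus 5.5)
Your proposal is correct and follows the paper's proof essentially step for step. You impose Dirichlet conditions at all vertices of the covers, count eigenvalues per edge by Sturm--Liouville comparison, restore the original vertex conditions at a cost of at most $|V^{\circ}|$ per copy of $\Gamma$, use a uniform lower bound on the spectra, pass to the vague limit by a sandwich argument, and get (ii) by subtraction. The one difference is in the restoring step: you derive it directly from min--max and codimension, whereas the paper cites the one-vertex-at-a-time interlacing Lemma~\ref{Moves_of_Eigenvalues}, which amounts to the same thing. Your remark that the main term in the lower bound of \eqref{Bound_Segments} needs the factor $1/\mathcal{L}_E$ is also right: as printed it is a typo, and the printed version would even be false for $\mathcal{L}_E>1$ and large $b$.
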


\begin{corollary}\label{DOS_regular}
The density of states $\mu$ in the case of $\delta$-type vertex conditions is a locally finite and regular measure.
\end{corollary}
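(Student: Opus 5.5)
We need to prove Corollary \ref{DOS_regular}: that $\mu$ is locally finite and regular. The plan is to read local finiteness directly off Theorem~\ref{DOS_bounds}, and then get regularity for free from general measure theory on $\mathbb{R}$.

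First, $\mu$ is a positive Borel measure on $\mathbb{R}$. By Definition~\ref{The_measures}(iv) it is a countable sum of spectral measures $\mu_{g_k}$, each a finite positive Borel measure, scaled by $1/\mathcal{L}_E$. Countable sums of positive measures are countably additive, so $\mu$ is a Borel measure with values in $[0,\infty]$.

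Next, local finiteness. It suffices to show $\mu(K)<\infty$ for every compact $K\subset\mathbb{R}$. Any such $K$ lies in $(-\infty,x]$ for some $x>W_{\max}$. The upper bound in \eqref{Bound_commulative} gives
\[
\mu\bigl((-\infty,x]\bigr)\le \frac{\sum_{e\in E}\ell_e\sqrt{x-W_{e,\min}}}{\pi\mathcal{L}_E}+\frac{|V^\circ|}{\mathcal{L}_E}<\infty ,
\]
since $E$ and $V^\circ$ are finite. Hence $\mu(K)<\infty$. In fact this shows more: $\mu$ of every half-line $(-\infty,x]$ is finite. The point to stress is that the lower part of the spectrum causes no trouble, because the half-line bound already covers it. One should check this bound is genuinely valid for all $x>W_{\max}$ and not merely asymptotic; the theorem states it that way.

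Finally, regularity. A locally finite Borel measure on $\mathbb{R}$ (more generally, on a locally compact, separable metric space, where every open set is $\sigma$-compact) is automatically both outer regular and inner regular, i.e.\ Radon. This is a standard result, for instance Rudin's \emph{Real and Complex Analysis}, Thm.~2.18. A concrete route is also available: $F(x)=\mu((-\infty,x])$ is finite, nondecreasing and right-continuous, so $\mu$ coincides with the Lebesgue--Stieltjes measure of $F$, and that measure is regular. There is no real obstacle here; the only substantive input is the finiteness bound from Theorem~\ref{DOS_bounds}.
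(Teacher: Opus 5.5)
Your proposal is correct and is essentially the paper's argument. The paper gives no separate proof: it notes that the upper bound \eqref{Bound_commulative} of Theorem~\ref{DOS_bounds} makes $\mu$ locally finite, and regularity then follows from the standard fact that locally finite Borel measures on $\mathbb{R}$ are regular, which you justify correctly via Rudin or the Lebesgue--Stieltjes description.

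Your reliance on that bound is also not circular. The upper bound only uses the open-set half of the portmanteau inequality for the vague limit in Theorem~\ref{ESM_limit}(ii), and that half holds for any Borel measure without a priori regularity.
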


To prove Theorem \ref{DOS_bounds}, we use two results from \cite{Introduction_to_Quantum_Graphs}, a standard result on Sturm-Liouville operators, and a lemma we prove here.

\begin{lemma}\label{Eigenvelues_to_Infinity}\cite[Thm.\ 3.1.1]{Introduction_to_Quantum_Graphs}
Let $\Gamma$ be a compact quantum graph equipped with a self-adjoint Schr\"odinger operator $\mathcal{H}_{\Gamma}$. Then the resolvent operator $\left(\mathcal{H}_{\Gamma} - i\right)^{-1}$ is compact on $L^{2}(\Gamma)$. Consequently, $\mathcal{H}_{\Gamma}$ admits a complete orthonormal system of eigenfunctions, and its spectrum $\sigma(\mathcal{H}_{\Gamma})$ consists solely of isolated eigenvalues $\lambda_n$ of finite multiplicity, with $\lambda_n \to \infty$ as $n \to \infty$.
\end{lemma}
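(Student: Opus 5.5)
The statement just before the cut is Lemma~\ref{Eigenvelues_to_Infinity}, cited from \cite[Thm.\ 3.1.1]{Introduction_to_Quantum_Graphs}. My plan is the standard compact-resolvent argument, which splits the operator into a free part and a bounded perturbation.

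\textbf{Step 1: reduce to $W=0$.} Since $\Gamma$ is compact and $W$ is continuous on each closed edge, $W$ is bounded. Multiplication by $W$ is therefore a bounded self-adjoint operator. By the second resolvent identity,
\[
(\mathcal{H}_\Gamma-i)^{-1}=(\mathcal{H}_0-i)^{-1}-(\mathcal{H}_0-i)^{-1}W(\mathcal{H}_\Gamma-i)^{-1},
\]
where $\mathcal{H}_0$ is $-d^2/dx^2$ with the same vertex conditions. Compactness of the resolvent of $\mathcal{H}_\Gamma$ thus follows from compactness of the resolvent of $\mathcal{H}_0$. Self-adjointness of $\mathcal{H}_\Gamma$ with domain $\operatorname{Dom}(\mathcal{H}_0)$ follows from Kato--Rellich.

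\textbf{Step 2: compactness for $\mathcal{H}_0$.} The resolvent $(\mathcal{H}_0-i)^{-1}$ maps $L^2(\Gamma)$ boundedly into $\operatorname{Dom}(\mathcal{H}_0)$ with the graph norm. That domain lies in $\bigoplus_{e\in E}H^2(e)$, a finite direct sum since $E$ is finite. The graph norm controls the $H^2$ norm on each edge: we have $\|f''\|$ directly from the graph norm, and $\|f'\|$ follows by interpolation on a bounded interval. By Rellich's theorem on each of the finitely many bounded intervals, $H^2(e)\hookrightarrow L^2(e)$ is compact. Hence the resolvent is compact.

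\textbf{Step 3: conclude.} Apply the spectral theorem for self-adjoint operators with compact resolvent. The operator $(\mathcal{H}_\Gamma-i)^{-1}$ is a compact normal operator, so it has an orthonormal eigenbasis with eigenvalues $(\lambda_n-i)^{-1}\to 0$. This gives isolated eigenvalues $\lambda_n$ of finite multiplicity with $|\lambda_n|\to\infty$.

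It remains to rule out $\lambda_n\to-\infty$, which requires semiboundedness from below. I expect this to be the main obstacle, particularly for general self-adjoint vertex conditions. For $\delta$-type conditions I would use the quadratic form
\[
\int_\Gamma\bigl(|f'|^2+W|f|^2\bigr)+\sum_{\upsilon}\alpha_\upsilon|f(\upsilon)|^2 .
\]
The vertex terms are controlled via the trace inequality $|f(\upsilon)|^2\le \epsilon\|f'\|^2+C_\epsilon\|f\|^2$ on an edge. Dirichlet vertices only restrict the form domain and contribute no vertex term. This gives a lower bound on the form, so the operator is bounded below and $\lambda_n\to+\infty$.
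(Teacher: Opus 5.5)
Your argument is correct and is essentially the standard proof behind \cite[Thm.~3.1.1]{Introduction_to_Quantum_Graphs}; the paper gives no proof of its own, only this citation. The ingredients are the compactness of $\bigoplus_{e}H^2(e)\hookrightarrow L^2(\Gamma)$ on finitely many bounded edges, together with semiboundedness obtained from the quadratic form and the trace inequality.

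One caveat: your lower bound is carried out only for $\delta$-type (and Dirichlet) conditions, which is all the paper actually uses, whereas the lemma is stated for general self-adjoint conditions. For those, the same trace-inequality argument goes through once the conditions are written in the normal form $P_{D}F(\upsilon)=0$, $P_{N}F'(\upsilon)=0$, $P_{R}F'(\upsilon)=\Lambda_\upsilon P_{R}F(\upsilon)$, with $\Lambda_\upsilon$ a finite Hermitian matrix. The vertex term in the form is then bounded by $C\sum_\upsilon|F(\upsilon)|^2$.
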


We denote by $\lambda_n(\Gamma)$ the (non-decreasing) sequence of eigenvalues of $\Gamma$, counted with multiplicities.

\begin{lemma}\label{Moves_of_Eigenvalues} \cite[Thm.\ 3.1.8]{Introduction_to_Quantum_Graphs}
Let $\Gamma_{\alpha'}$ be a compact quantum graph with $\delta$-type vertex conditions, obtained from the graph $\Gamma_{\alpha}$ by changing the coefficient of the condition at vertex $\upsilon$ from $\alpha$ to $\alpha'$. If $-\infty < \alpha < \alpha' \leq \infty$, then
\[
\lambda_n(\Gamma_{\alpha}) \leq \lambda_n(\Gamma_{\alpha'}) \leq \lambda_{n+1}(\Gamma_{\alpha})\,,
\]
where the eigenvalues are counted with multiplicities, and the notation follows that of Lemma~\ref{Eigenvelues_to_Infinity}.
\end{lemma}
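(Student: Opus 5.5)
The plan is to argue variationally through the quadratic form of $\mathcal{H}_\Gamma$ and the min-max principle. By Lemma~\ref{Eigenvelues_to_Infinity}, both $\Gamma_\alpha$ and $\Gamma_{\alpha'}$ have purely discrete spectrum bounded below. Hence
\[
\lambda_n = \min_{\dim S = n}\ \max_{0\neq f\in S} \frac{q(f)}{\|f\|^2},
\]
with $S$ ranging over subspaces of the form domain.

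First I identify the forms. For $f$ in the operator domain, integrate by parts on each edge. At each endpoint the boundary term is minus the outgoing derivative times $\overline{f(\upsilon)}$. Continuity at vertices and the condition \eqref{Delta_Type} then give
\[
\langle \mathcal{H}f,f\rangle = \sum_{e}\int_{I_e}\bigl(|f_e'|^2 + W|f_e|^2\bigr)\,dx_e + \sum_{u:\ \alpha_u<\infty}\alpha_u|f(u)|^2 .
\]
For finite $\alpha$ at the vertex $\upsilon$, the form domain $D$ consists of the continuous functions in $\bigoplus_e H^1(e)$ that vanish at all other Dirichlet vertices. This domain does not depend on the value of $\alpha$. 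For $\alpha=\infty$ at $\upsilon$, the form domain is $D_\infty=\{f\in D: f(\upsilon)=0\}$, a subspace of codimension one, and there the $\alpha_\upsilon$ term simply drops out.

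The one technical point, which I expect to be the main obstacle, is justifying that these forms are closed and semibounded, so that min-max legitimately applies with these domains. For this I use the standard trace estimate on an edge of length $\ell_e$:
\[
|f(\upsilon)|^2 \le \varepsilon\|f_e'\|^2 + C_\varepsilon\|f_e\|^2 .
\]
With it, the vertex terms are form-bounded with relative bound $0$, and $W$ is bounded because it is continuous on each compact edge. The operator associated with each form is then exactly the one with the $\delta$-type (respectively Dirichlet) condition at $\upsilon$, which is the standard form-to-operator correspondence.

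With the forms in place, the two inequalities follow from min-max.

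\emph{Lower bound.} Suppose $\alpha<\alpha'<\infty$. Then both forms live on the same domain $D$ and $q_\alpha\le q_{\alpha'}$ pointwise, because they differ only by $(\alpha'-\alpha)|f(\upsilon)|^2\ge 0$. Min-max gives $\lambda_n(\Gamma_\alpha)\le\lambda_n(\Gamma_{\alpha'})$. If instead $\alpha'=\infty$, the form $q_\infty$ is the restriction of $q_\alpha$ to $D_\infty\subset D$. Minimizing over fewer subspaces can only increase the min-max values, so again $\lambda_n(\Gamma_\alpha)\le\lambda_n(\Gamma_\infty)$.

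\emph{Upper bound.} By the previous step applied with $\alpha'$ in place of $\alpha$, we have $\lambda_n(\Gamma_{\alpha'})\le\lambda_n(\Gamma_\infty)$. It therefore suffices to show $\lambda_n(\Gamma_\infty)\le\lambda_{n+1}(\Gamma_\alpha)$.
\begin{enumerate}[label=(\alph*)]
\item Let $S$ be the span of the first $n+1$ eigenfunctions of $\Gamma_\alpha$. Every $f\in S$ satisfies $q_\alpha(f)\le\lambda_{n+1}(\Gamma_\alpha)\|f\|^2$.
\item Since $D_\infty$ has codimension one in $D$, the subspace $S\cap D_\infty$ has dimension at least $n$.
\item On $S\cap D_\infty$ we have $q_\infty=q_\alpha$, because the term $\alpha|f(\upsilon)|^2$ vanishes there.
\item Using an $n$-dimensional subspace of $S\cap D_\infty$ as a trial space in the min-max formula for $\Gamma_\infty$ gives $\lambda_n(\Gamma_\infty)\le\lambda_{n+1}(\Gamma_\alpha)$.
\end{enumerate}
Combining this with the lower bound yields the stated interlacing.
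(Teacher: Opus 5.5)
Your proposal is correct and is essentially the standard min--max argument behind the cited result \cite[Thm.\ 3.1.8]{Introduction_to_Quantum_Graphs}; the paper gives no proof of its own, only the citation. The argument compares the forms on the common domain for the lower bound, and uses restriction to the codimension-one subspace $\{f(\upsilon)=0\}$, via $\lambda_n(\Gamma_{\alpha'})\le\lambda_n(\Gamma_\infty)\le\lambda_{n+1}(\Gamma_\alpha)$, for the upper bound. One small slip: after integrating by parts, the boundary contribution at a vertex is \emph{plus} (not minus) the outgoing derivative times $\overline{f(\upsilon)}$, which is what gives the $+\alpha_u|f(u)|^2$ term you correctly display.
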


The following is a standard result in the theory of Sturm-Liouville operators.

\begin{lemma}\label{Bound_Sturm_Liouville} (see, e.g., \cite{zettl} or \cite[Prop.\ 7.14]{Sturm_Liouville} for the precise statement)
Consider the system
\[
-\frac{d^{2}}{dx^{2}}u(t)+q(t)u(t)=\lambda u(t), \quad u(0)=u(L)=0
\]
on the interval $\left[0,L\right]$, with $L > 0$ and $q \in C\left([0,L]\right)$.  
Then the $n$-th eigenvalue $\lambda_n$ of this system satisfies the bounds
\begin{equation}
\frac{\pi^{2}n^{2}}{L^{2}}+q_{\min} \leq \lambda_n \leq \frac{\pi^{2}n^{2}}{L^{2}}+q_{\max},
\end{equation}
where
\begin{equation}
q_{\min} = \min_{t \in [0,L]} q(t), \qquad q_{\max} = \max_{t \in [0,L]} q(t).
\end{equation}
\end{lemma}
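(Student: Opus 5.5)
The plan is to prove this by comparing quadratic forms and applying the Courant--Fischer min-max principle. Write $H_q = -\frac{d^2}{dx^2} + q$ on $L^2([0,L])$ with Dirichlet conditions $u(0)=u(L)=0$. Its closed quadratic form is
\[
Q_q[u] = \int_0^L \left(|u'(t)|^2 + q(t)|u(t)|^2\right)dt, \qquad u \in H_0^1([0,L]).
\]
The form domain $H_0^1([0,L])$ does not depend on $q$, since $q$ is continuous and hence bounded on $[0,L]$. The operator $H_q$ is self-adjoint with compact resolvent, so its spectrum is a nondecreasing sequence of eigenvalues $\lambda_1 \le \lambda_2 \le \dots$ tending to $\infty$. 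For regular Sturm--Liouville problems each eigenvalue is simple, but this is not needed for the argument.

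First we record the unperturbed case $q \equiv c$ for a constant $c$. The Dirichlet eigenfunctions of $-\frac{d^2}{dx^2}$ on $[0,L]$ are $\sin(\pi n t/L)$, with eigenvalues $\pi^2 n^2/L^2$ for $n \in \mathbb{N}$. Adding the constant $c$ shifts every eigenvalue by exactly $c$, so the $n$-th eigenvalue of $H_c$ is $\pi^2 n^2/L^2 + c$.

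Next, the pointwise inequalities $q_{\min} \le q(t) \le q_{\max}$ give, for every $u \in H_0^1([0,L])$,
\[
Q_{q_{\min}}[u] \le Q_q[u] \le Q_{q_{\max}}[u].
\]
By min-max, each eigenvalue is
\[
\lambda_n(H) = \min_{\substack{F \subset H_0^1 \\ \dim F = n}} \ \max_{\substack{u \in F \\ \|u\|_{L^2} = 1}} Q[u].
\]
The minimum runs over the same family of subspaces for all three forms, so the form inequalities pass directly to the eigenvalues:
\[
\lambda_n(H_{q_{\min}}) \le \lambda_n(H_q) \le \lambda_n(H_{q_{\max}}).
\]
Combining this with the explicit constant-potential eigenvalues yields the claimed bounds.

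The only point requiring care is justifying min-max in this setting: that $H_q$ is the self-adjoint operator associated with the closed semibounded form $Q_q$ on $H_0^1$, and that it has purely discrete spectrum. Both facts are standard, since $q$ is a bounded perturbation of the Dirichlet Laplacian, which has compact resolvent. With those in hand, the rest is the routine computation above.
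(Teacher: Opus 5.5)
Your proof is correct. The form inequality $Q_{q_{\min}}\le Q_q\le Q_{q_{\max}}$ holds on the common form domain $H_0^1([0,L])$. Combined with the min-max characterization and the explicit constant-potential spectrum $\pi^2 n^2/L^2 + c$, it gives exactly the stated two-sided bound. Your indexing from $n=1$, counted with multiplicity, matches how the lemma is used in the proof of Theorem~\ref{DOS_bounds}.

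The paper does not prove this lemma; it cites standard Sturm--Liouville references. So there is no internal argument to compare against, and your variational route is the standard self-contained proof.

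A classical alternative is the Sturm comparison theorem. The $n$-th Dirichlet eigenfunction $u_n$ has exactly $n-1$ interior zeros, so its $n$-th positive zero is $L$. Compare $u_n''+(\lambda_n-q)u_n=0$ with $\sin\bigl(\sqrt{\lambda_n-q_{\min}}\,t\bigr)$ and $\sin\bigl(\sqrt{\lambda_n-q_{\max}}\,t\bigr)$, all vanishing at $0$. This gives $n\pi/\sqrt{\lambda_n-q_{\min}}\le L\le n\pi/\sqrt{\lambda_n-q_{\max}}$; the upper estimate is trivial when $\lambda_n\le q_{\max}$. That is the same inequality.

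The two routes trade off differently. The comparison route needs oscillation theory but no operator-theoretic setup. Yours needs the closed-form and compact-resolvent framework, but it is shorter. It also uses the same min-max mechanism as the interlacing in Lemma~\ref{Moves_of_Eigenvalues}, which is applied alongside this lemma in that proof.
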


\begin{lemma}\label{Bound_Spectrum_From_Below}
Let $\Gamma=(V,E)$ be a compact quantum graph equipped with a Schr\"odinger operator $\mathcal{H}_{\Gamma}$. Then $\mathcal{H}_{\Gamma}$ is bounded from below, that is, its spectrum is bounded from below. Moreover, this lower bound can be chosen uniformly for $\Gamma$ and for all of its covering graphs.
\end{lemma}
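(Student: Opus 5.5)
The statement to prove is Lemma~\ref{Bound_Spectrum_From_Below}. The plan is to bound the quadratic form of $\mathcal{H}$ from below, using only edge-local quantities: a lower bound on the potential, the vertex coefficients, a lower bound on edge lengths, and the maximal degree. All of these are the same on $\Gamma$ and on every cover, because a covering map preserves edge lengths, the potential, the coefficients $\alpha_\upsilon$ and vertex degrees. The bound will therefore automatically be uniform over all covers.

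\textbf{Step 1: the quadratic form.} For $f \in \operatorname{Dom}(\mathcal{H})$ with $\delta$-type conditions, integrate by parts on each edge and use the vertex condition \eqref{Delta_Type}:
\[
\langle \mathcal{H}f,f\rangle=\sum_{e}\int_{I_e}\left(|f_e'|^2+W|f_e|^2\right)dx-\sum_{\upsilon:\,\alpha_\upsilon<\infty}\alpha_\upsilon|f(\upsilon)|^2 .
\]
Dirichlet vertices contribute nothing, since $f(\upsilon)=0$ there. The sign of the vertex term depends on the outgoing-derivative convention; in any case it is bounded in absolute value by $A|f(\upsilon)|^2$, where
\[
A=\max\{|\alpha_\upsilon| : \upsilon\in V,\ \alpha_\upsilon<\infty\}.
\]
We also have $W\ge W_{\min}=\min_\Gamma W$, which is finite because $\Gamma$ is compact and $W$ is continuous on each closed edge.

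\textbf{Step 2: local Sobolev estimate.} This is the key step. For each vertex $\upsilon$ with $\alpha_\upsilon<\infty$, choose one incident edge $e$ and use the one-dimensional trace inequality on an interval of length $\ell\ge\delta$: for every $\varepsilon\in(0,\delta]$,
\[
|f_e(0)|^2\le \varepsilon\int_0^{\ell}|f_e'|^2+\frac{2}{\varepsilon}\int_0^{\ell}|f_e|^2 .
\]
This follows from $|f(0)|^2\le 2|f(x)|^2+2\varepsilon\int_0^\varepsilon|f'|^2$ for $x\in[0,\varepsilon]$, averaged over $x\in[0,\varepsilon]$.

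Each edge has two endpoints, so each edge is charged at most twice. Choose $\varepsilon=\min\{\delta, 1/(4A+1)\}$. Summing over vertices then gives
\[
\sum_\upsilon |\alpha_\upsilon||f(\upsilon)|^2\le \tfrac12\|f'\|^2+C\|f\|^2,
\]
with $C$ depending only on $A$ and $\delta$. It is essential here to charge each vertex to a single edge, so that the bound does not depend on the (possibly infinite) number of vertices.

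\textbf{Step 3: conclude.} Combining the two steps,
\[
\langle\mathcal{H}f,f\rangle\ge \tfrac12\|f'\|^2+(W_{\min}-C)\|f\|^2\ge (W_{\min}-C)\|f\|^2 .
\]
Hence $\sigma(\mathcal{H})\subseteq[W_{\min}-C,\infty)$.

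For any cover, including the universal cover $\mathcal{T}$, the same computation applies verbatim. The data $W_{\min}$, $A$ and $\delta$ are inherited through the covering map, and every edge still has only two endpoints. This gives the same constant, which establishes uniformity.

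The only delicate points are the following. On infinite covers, the integration by parts must be justified; this is done by working on a core of compactly supported domain functions and using that the operator is self-adjoint. One must also ensure that the $\varepsilon$ in the trace inequality is chosen uniformly, which is exactly why the assumption $\ell_e>\delta$ matters.
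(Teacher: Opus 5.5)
Your proof is correct, and it takes a genuinely different route from the paper's. The paper does not prove a form estimate itself. It quotes Corollary~10 of \cite{Basic_inf} for a bound $\mathcal{H}_0\ge -C'$ on the potential-free operator, and asserts that $C'$ depends only on the finite set of edge lengths and the vertex conditions. It then adds $W$ as a bounded perturbation via \cite{Perturbation}, and justifies uniformity over covers in one sentence by noting that covers preserve these data.

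You instead derive the bound from scratch. Your ingredients are the quadratic form, a one-dimensional trace inequality with $\varepsilon\le\delta$ chosen uniformly, and the device of charging each vertex to one incident edge, so that every edge is charged at most twice. This gives an explicit constant $W_{\min}-C(A,\delta)$. It also gives a transparent reason why the bound does not depend on the number of vertices, which is the actual content of the uniformity claim and is only asserted in the paper. The perturbation step becomes unnecessary too, since $W\ge W_{\min}$ enters the form directly.

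The citation route is shorter and, through the cited result, covers general self-adjoint vertex conditions. Your computation is written for $\delta$-type conditions, which is all the paper needs: covers are defined via $\alpha_\upsilon=\alpha_{\Xi(\upsilon)}$, and the lemma is only used for $\delta$-type graphs. For general conditions your argument still goes through if you charge each edge-end to its own edge instead of each vertex.

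There are two harmless slips. First, with the convention \eqref{Delta_Type} the vertex term enters the form as $+\sum_\upsilon\alpha_\upsilon|f(\upsilon)|^2$, which is consistent with the monotonicity in Lemma~\ref{Moves_of_Eigenvalues}. Second, the averaging argument you sketch yields $2\varepsilon$ rather than $\varepsilon$ in front of $\int|f'|^2$. You can either obtain the stated inequality from $|f(0)|^2=|f(x)|^2-2\,\mathrm{Re}\int_0^x f'\bar f$, or keep $2\varepsilon$ and accept a coefficient $4A\varepsilon<1$ in place of $\tfrac12$, which still suffices.
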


\begin{proof}[Proof]
Let $\mathcal{H}_{0} = -\frac{d^{2}}{dx^{2}}$ denote the standard Schr\"odinger operator on $\Gamma$. As shown in Corollary~10 of~\cite{Basic_inf}, the operator $\mathcal{H}_{0}$ is bounded from below; more precisely, there exists a constant $C' \geq 0$ such that
\[
\mathcal{H}_{0} \geq -C' \,\mathrm{Id}.
\]
The constant $C'$ depends only on the finite set of edge lengths and on the vertex conditions of the graph.

Since the multiplication operator by the potential $W$ satisfies $\|W\|_{\mathrm{op}} = \|W\|_{\infty}$, Theorem~9.1 of \cite{Perturbation} guarantees, via standard perturbation theory, the corresponding result for the perturbed operator.

Finally, the lower bound is uniform over all covering graphs of $\Gamma$,
because the potential values, the edge lengths, and the vertex conditions are
preserved under coverings.
\end{proof}

\medskip

We can now prove Theorem \ref{DOS_bounds}.

\begin{proof}[Proof of Theorem~\ref{DOS_bounds}]
Let $\Gamma_{n}=\left(V_{n},E_{n}, \ell_{n}, \mathcal{H}_{\Gamma_{n}}\right)$ be a sequence of $m_{n}$-covers of $\Gamma=\left(V,E, \ell, \mathcal{H}_{\Gamma}\right)$ with the covering maps $\Xi_{n}:\Gamma_{n}\rightarrow\Gamma$, where $m_{n}\rightarrow\infty$ as $n\rightarrow\infty$, as in Theorem \ref{ESM_limit}(i), such that the girth tends to infinity. Let $x > W_{\max}$.
For each $n$, let $\Gamma_{n,D}$ be the graph obtained from $\Gamma_n$ by imposing Dirichlet conditions at all vertices $\upsilon \in V_n$. Let $\{\lambda_{n, \ell}\}_{\ell \in \mathbb{N}}$ be the eigenvalues of $\mathcal{H}_{\Gamma_n}$, and let $\{\lambda_{\ell}(\Gamma_{n,D})\}_{\ell \in \mathbb{N}}$ be the eigenvalues of $\mathcal{H}_{\Gamma_{n,D}}$.

For every $e \in E_n$, let $\mathfrak{a}_{n,e}$ be the largest non-negative integer such that
\[
\frac{\mathfrak{a}_{n,e}^2 \pi^2}{\ell_e^2} + W_{e,\min} \leq x,
\]
and define $\mathfrak{b}_{n,e}$ similarly using $W_{e,\max}$ instead. Then
\[
\frac{\ell_{e}}{\pi}\sqrt{x-W_{e,\min}}-1<\mathfrak{a}_{n,e}\leq\frac{\ell_{e}}{\pi}\sqrt{x-W_{e,\min}}
\]
\[
\frac{\ell_{e}}{\pi}\sqrt{x-W_{e,\max}}-1<\mathfrak{b}_{n,e}\leq\frac{\ell_{e}}{\pi}\sqrt{x-W_{e,\max}}\,.
\]
From Lemma~\ref{Bound_Sturm_Liouville} it follows that
\begin{align*}
& \sum_{e\in E_{n}}\frac{\ell_{e}}{\pi}\sqrt{x-W_{e,\max}}-\left|E_{n}\right|<\sum_{e\in E_{n}}\mathfrak{b}_{n,e}\leq\#\left\{ \ell\in\mathbb{N}\mid\lambda_{\ell}\left(\Gamma_{n,D}\right)\leq x\right\} \leq\\
& \leq\sum_{e\in E_{n}}\mathfrak{a}_{n,e}\leq\sum_{e\in E_{n}}\frac{\ell_{e}}{\pi}\sqrt{x-W_{e,\min}}\,.
\end{align*}
By Lemma~\ref{Moves_of_Eigenvalues}, after restoring the original vertex conditions at the $m_n |V^\circ|$ interior vertices, the cumulative distribution function $\nu_{n}\bigl((-\infty, x]\bigl)$ of the eigenvalue counting measure satisfies
\[
\frac{\sum_{e\in E}\ell_{e}\sqrt{x-W_{e,\max}}}{\pi\mathcal{L}_{E}}-\frac{\left|E\right|}{\mathcal{L}_{E}}<\nu_{n}\bigl(\left(-\infty,x\right]\bigl)\leq\frac{\sum_{e\in E}\ell_{e}\sqrt{x-W_{e,\min}}}{\pi\mathcal{L}_{E}}+\frac{\left|V^{\circ}\right|}{\mathcal{L}_{E}}\,.
\]
Similarly, for the open interval:
\[
\frac{\sum_{e\in E}\ell_{e}\sqrt{x-W_{e,\max}}}{\pi\mathcal{L}_{E}}-\frac{\left|E\right|}{\mathcal{L}_{E}}\leq\nu_{n}\bigl(\left(-\infty,x\right)\bigl)<\frac{\sum_{e\in E}\ell_{e}\sqrt{x-W_{e,\min}}}{\pi\mathcal{L}_{E}}+\frac{\left|V^{\circ}\right|}{\mathcal{L}_{E}}\,.
\]

By Lemma~\ref{Bound_Spectrum_From_Below}, there exists a constant $C \geq 0$
such that for every $n$ the spectrum of $\mathcal{H}_{n}$ is contained in
$[-C,\infty)$. Consequently,
\begin{align*}
\nu_{n}\bigl((-\infty,x)\bigr) &= \nu_{n}\bigl((-C,x)\bigr),\\
\nu_{n}\bigl((-\infty,x]\bigr) &= \nu_{n}\bigl([-C,x]\bigr),
\end{align*}
and similarly,
\begin{align*}
\mu\bigl((-\infty,x)\bigr) &= \mu\bigl((-C,x)\bigr),\\
\mu\bigl((-\infty,x]\bigr) &= \mu\bigl([-C,x]\bigr).
\end{align*}

By the vague convergence established in Theorem~\ref{ESM_limit}(ii), we have,
for every $n$,
\[
\mu\bigl((-C,x)\bigr)
\;\leq\;
\nu_{n}\bigl((-C,x)\bigr)
\;\leq\;
\nu_{n}\bigl([-C,x]\bigr)
\;\leq\;
\mu\bigl([-C,x]\bigr).
\]

Therefore, for every $\varepsilon>0$,
\begin{align*}
&\limsup_{n\rightarrow\infty}\nu_{n}\bigl([-C+\varepsilon,x-\varepsilon]\bigr)
\leq
\mu\bigl([-C+\varepsilon,x-\varepsilon]\bigr)
\leq
\mu\bigl((-C,x)\bigr)\\
&\leq
\mu\bigl([-C,x]\bigr)
\leq
\mu\bigl((-C-\varepsilon,x+\varepsilon)\bigr)
\leq
\liminf_{n\rightarrow\infty}\nu_{n}\bigl((-C-\varepsilon,x+\varepsilon)\bigr).
\end{align*}

Finally, letting $\varepsilon \downarrow 0$ and using the monotone continuity
of measures yields inequality~\eqref{Bound_commulative}.

Now~\eqref{Asymptote_commulative} follows immediately by expanding $\sqrt{1-\frac{W_{e,\max}}{x}}$ and $\sqrt{1-\frac{W_{e,\min}}{x}}$ into a Taylor series.

For any $W_{\max} \leq a \leq b < \infty$, we obtain
\begin{align*}
 & \frac{\sum_{e\in E}\ell_{e}\left(\sqrt{b-W_{e,\max}}-\sqrt{a-W_{e,\min}}\right)}{\pi}-\frac{\left|E\right|+\left|V^{\circ}\right|}{\mathcal{L}_{E}}\leq\mu\bigl(\left(a,b\right)\bigl)=\\
 & =\mu\bigl(\left(-\infty,b\right)\bigl)-\mu\bigl(\left(-\infty,a\right]\bigl)\leq\frac{\sum_{e\in E}\ell_{e}\left(\sqrt{b-W_{e,\min}}-\sqrt{a-W_{e,\max}}\right)}{\pi\mathcal{L}_{E}}+\frac{\left|E\right|+\left|V^{\circ}\right|}{\mathcal{L}_{E}}\,.
\end{align*}
The same proof applies to closed intervals $[a,b]$, establishing~\eqref{Bound_Segments}.
\end{proof}

\section{PROOF OF THE MAIN THEOREMS}

We begin with an important observation.

\begin{observation}\label{Observation_in_Q-Aomoto}

In the Q-Aomoto set there are two types of connected components:
\begin{enumerate}[label=\roman*.]
\item A connected set of vertices together with the edges emanating from them.
\item Edges $e=\left(\upsilon,u\right)\in E_{\lambda}\left(\Gamma\right)$
such that $g_{e}\left(\ell_{e}\right)=0$ (recall \eqref{f_and_g}), where $\upsilon,u\notin X_{\lambda}\left(\Gamma\right)$. Note that by (\ref{Adjustment_in_the_other_direction}), $g_{e}\left(\ell_{e}\right)=0$
if and only if $g_{\check{e}}\left(\ell_{e}\right)=0$.
\end{enumerate}
\end{observation}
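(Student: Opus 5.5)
The plan is to unwind the definition of connectedness in $X_{\lambda}(\Gamma)$ and split according to whether a connected component contains a vertex of $V_{\lambda}(\Gamma)$. The only non-definitional input is the local basis $\{f_e,g_e\}$ of Notation~\ref{f_and_g_Definition}, applied to a lifted edge. Recall that two edges of $X_{\lambda}(\Gamma)$ are connected only through a shared vertex lying in $V_{\lambda}(\Gamma)$. Recall also that, by continuity in the $\delta$-type conditions, $\upsilon\in V_{\lambda}(\Gamma)$ implies $E_{\upsilon}\subseteq E_{\lambda}(\Gamma)$.

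\emph{Case 1: the component $T_1$ contains a vertex of $V_{\lambda}(\Gamma)$.} Every edge of $T_1$ is reached from such a vertex through a chain of edges, and consecutive edges in the chain share a vertex of $V_{\lambda}(\Gamma)$. So every edge of $T_1$ is incident to some vertex of $V_{\lambda}(\Gamma)\cap T_1$. Conversely, every edge incident to such a vertex lies in $E_{\lambda}(\Gamma)$ and hence in $T_1$. Moreover, the vertices of $V_{\lambda}(\Gamma)\cap T_1$ are connected to one another through edges of $\Gamma$ joining them. Therefore $T_1$ is exactly a connected set of vertices of $V_{\lambda}(\Gamma)$ together with all edges emanating from them. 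This is type (i).

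\emph{Case 2: $T_1$ contains no vertex of $V_{\lambda}(\Gamma)$.} Then no edge of $T_1$ can be connected to any other edge, so $T_1$ is a single edge $e=(\upsilon,u)\in E_{\lambda}(\Gamma)$ with $\upsilon,u\notin V_{\lambda}(\Gamma)$. By the definition of $E_{\lambda}(\Gamma)$ there is a lift $\tilde e=(\tilde\upsilon,\tilde u)$ and some $h\in\ker(\lambda-\mathcal{H}_{\mathcal{T}})$ that is not identically zero on $\tilde e$.

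Since $\upsilon,u\notin V_{\lambda}(\Gamma)$, every eigenfunction vanishes at every lift of $\upsilon$ and $u$; in particular $h(\tilde\upsilon)=h(\tilde u)=0$. (If $\alpha_\upsilon=\infty$ this vanishing is automatic from the Dirichlet condition.) The covering map preserves edge lengths and the potential, so the basis $f_e,g_e$ on $\tilde e$ is the same as on $e$. Writing $h|_{\tilde e}=h(\tilde\upsilon)f_e+h'_{\tilde e}(\tilde\upsilon)g_e$, the vanishing at $\tilde\upsilon$ gives $h|_{\tilde e}=h'_{\tilde e}(\tilde\upsilon)\,g_e$, and the nontriviality of $h|_{\tilde e}$ forces $h'_{\tilde e}(\tilde\upsilon)\neq0$. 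Evaluating at $\ell_e$ and using $h(\tilde u)=0$ then yields $g_e(\ell_e)=0$. This is type (ii). The symmetry $g_e(\ell_e)=0\iff g_{\check e}(\ell_e)=0$ is immediate from \eqref{Adjustment_in_the_other_direction}.

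The main point needing care is Case 1: one must check that the component contains \emph{all} edges at its $V_{\lambda}$-vertices and \emph{only} those. Both facts follow from $E_\upsilon\subseteq E_\lambda(\Gamma)$ for $\upsilon\in V_{\lambda}(\Gamma)$ together with the definition of connection. Beyond that, the argument is routine bookkeeping.
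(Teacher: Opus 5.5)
Your proof is correct and matches the paper's approach. The paper states this as an observation without proof; it relies on the preceding remark that continuity forces $E_{\upsilon}\subseteq E_{\lambda}(\Gamma)$ for $\upsilon\in V_{\lambda}(\Gamma)$, on the definition of connectedness through vertices of $V_{\lambda}(\Gamma)$, and on the fact that an eigenfunction vanishing at both endpoints of a lifted edge is a nonzero multiple of $g_e$ there, which forces $g_e(\ell_e)=0$ — exactly the points you spell out.
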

An example for Q-Aomoto set with these two types of connected components is shown in Figure~\ref{fig:Q-Aomoto_Example}.

\begin{minipage}{0.9\textwidth}
\begin{center}
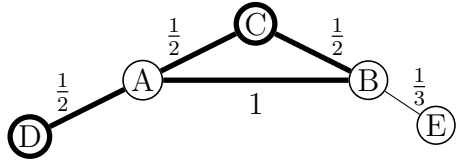

\begin{tikzpicture}[scale=1.5]

\begin{scope}[shift={(0,2.5)}]
\tikzset{
  vertex/.style={circle, draw, line width=0.6pt, minimum size=4mm, inner sep=1pt},
  thickvertex/.style={circle, draw, line width=2pt, minimum size=4mm, inner sep=1pt}
}

\node[vertex] (A) at (0,0) {A};
\node[vertex] (B) at (2,0) {B};
\node[thickvertex] (C) at (1,0.5) {C}; 
\node[thickvertex] (D) at (-1, -0.5) {D};
\node[vertex] (E) at (2.6, -0.4) {E};

\draw[line width=2pt] (A) -- node[below] {1} (B);
\draw[line width=2pt] (A) -- node[left, xshift=-1 pt, yshift=6 pt] {$\frac{1}{2}$} (C);
\draw[line width=2pt] (B) -- node[right, xshift=1 pt, yshift=6 pt] {$\frac{1}{2}$} (C);
\draw[line width=2pt] (D) -- node[left, xshift=-1 pt, yshift=6 pt] {$\frac{1}{2}$} (A);

\draw[line width=0.4pt] (E) -- node[right, xshift=-1 pt, yshift=6 pt] {$\frac{1}{3}$} (B);
\end{scope}
\end{tikzpicture}

\vspace{1em}
\captionof{figure}{A quantum graph $\Gamma$, in which the Q-Aomoto set corresponding to $\lambda=\pi^{2}$ is highlighted. The graph is equipped with the standard Schr\"odinger operator and Kirchhoff boundary conditions at all vertices. Edge lengths are indicated along the edges. The vertex $D$ together with its incident edge, and the vertex $C$ together with its incident edges, form two connected components of the first type described in Observation~\ref{Observation_in_Q-Aomoto}. The edge $(A,B)$ forms a connected component of the second type. In this example, $\partial X_{\pi^{2}}(\Gamma)=\{A,B\}$.}
\label{fig:Q-Aomoto_Example}
\end{center}
\end{minipage}

\begin{proof}[Proof of Theorem~\ref{Q-Aomoto_first_theorem}(i)-(ii)]
The results are immediate for the second type of Q-Aomoto components (according to Observation~\ref{Observation_in_Q-Aomoto}).
For the first type, consider a connected component $X \subseteq \Xi^{-1}(X_{\lambda}(\Gamma))$. Using a countable linear combination of eigenfunctions, we can construct a function $h$ such that $h(\upsilon) \neq 0$ for every $\upsilon \in V(X)$ and such that $h|_{e}$ is not identically zero for every $e \in E(X)$. Impose Dirichlet boundary conditions on the boundary $\partial X$ of the subgraph $X \cup \partial X \cup \partial_{\infty}X$. Then, by Lemma \ref{Lemma_From_Derived_Graph}(i), the restrictions of the eigenfunctions to $V(X)$ lie in a one-dimensional subspace of $\ell^2(V(X))$. If $\Xi(X)$ contained a cycle, then $X$ would be infinite and periodic, contradicting the symmetry of the subset and the invariance of $h$ under translations.

We have thus proved Theorem \ref{Q-Aomoto_first_theorem}(i), and in particular, shown that $X$ is finite.
Finally, Lemma \ref{Lemma_From_Derived_Graph}(ii) yields Theorem \ref{Q-Aomoto_first_theorem}(ii).
\end{proof}

Recall that a \emph{pure cycle} in a graph is one in which every vertex is of degree $2$ except perhaps one through which the cycle is connected to the rest of the graph. The next lemma says that if an eigenfunction's support includes some vertices in the lift of a pure cycle, then it must vanish on the vertex connecting it to the graph.

\begin{lemma}\label{Remark_Pure_Cycle}
Let $L\subseteq\Gamma$ be a pure cycle and let $u$ be the vertex through which $L$ is connected to the rest of the graph $\Gamma$. Suppose that for every vertex $\upsilon \in V(L)$, $\alpha_{\upsilon}<\infty$. Let $\lambda \in \sigma_{\mathfrak{p}}(\mathcal{H}_{\mathcal{T}})$ and assume that $X_{\lambda}(\Gamma) \cap L \neq \emptyset$. Then necessarily, $u \in \partial X_{\lambda}(\Gamma)$.
\end{lemma}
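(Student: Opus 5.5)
The plan is to argue by contradiction, using the one-dimensionality statement of Theorem~\ref{Q-Aomoto_first_theorem}(ii) together with the deck transformation that translates along the lift of $L$. Write $L=(u=\upsilon_1,\upsilon_2,\dots,\upsilon_n)$, where $\upsilon_2,\dots,\upsilon_n$ have degree $2$.

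Since all $\alpha_{\upsilon}<\infty$ on $L$, and in particular $\alpha_u<\infty$, showing $u\in\partial X_\lambda(\Gamma)$ amounts to two claims: (a) some edge at $u$ lies in $E_\lambda(\Gamma)$, and (b) $u\notin V_\lambda(\Gamma)$.

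For (a), I would argue as follows. If $X_\lambda(\Gamma)\cap L\neq\emptyset$, then some edge of $L$ lies in $E_\lambda(\Gamma)$, because every $\upsilon\in V_\lambda(\Gamma)$ drags its incident edges into $E_\lambda(\Gamma)$. The interior vertices of $L$ have degree $2$, so a nonzero eigenfunction on one edge of a lifted arc propagates along the lifted arc by the transfer relation \eqref{Matrix_f_g} and the vertex condition. It continues until it reaches either a lift of $u$ or a vertex at which it vanishes. In the latter case the continuity and $\delta$-condition at a degree-$2$ vertex with $h(\upsilon)=0$ force $h'$ to be continuous across that vertex. So a nonzero solution cannot simply die at a degree-$2$ vertex, and it reaches a lift of $u$ with one of the two incident $L$-edges nonzero. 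This gives (a).

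For (b), suppose $u\in V_\lambda(\Gamma)$. By Theorem~\ref{Q-Aomoto_first_theorem}(i) the vertices of $L$ cannot all lie in $V_\lambda(\Gamma)$, so there is a smallest $k\ge2$ with $\upsilon_k\notin V_\lambda(\Gamma)$ and a largest $j\le n$ with $\upsilon_j\notin V_\lambda(\Gamma)$. Fix a lift $\tilde u$ and let $\tau$ be the deck transformation translating along the lifted cycle, so that the lift of $L$ through $\tilde u$ is the bi-infinite path $\dots,\tau^{-1}\tilde u,\dots,\tilde u,\tilde\upsilon_2,\dots,\tilde\upsilon_n,\tau\tilde u,\dots$. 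The lifted component $\widetilde T_1\ni\tilde u$ of $X_\lambda(\Gamma)$ contains the arc from $\tau^{-1}\tilde\upsilon_j$ to $\tilde\upsilon_k$, with Dirichlet conditions at its two ends. Every eigenfunction vanishes at lifts of $\upsilon_j,\upsilon_k$, and all interior vertices of these arcs other than $\tilde u$ have degree $2$.

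The key step, which I expect to be the main obstacle, is to produce a second, linearly independent eigenfunction on $\widetilde T_1$, contradicting Theorem~\ref{Q-Aomoto_first_theorem}(ii). The idea is to exploit the fact that the cycle arc closes up. Consider the closed arc $C$ of $\mathcal T$ from $\tilde\upsilon_j$ through $\tau\tilde u$ to $\tau\tilde\upsilon_k$. It contains only degree-$2$ vertices apart from $\tau\tilde u$, and it is a translate of the two pieces on either side of $\tilde u$ glued together. I would first try to show that $\lambda$ is a Dirichlet eigenvalue of each of the two sub-arcs ending at the lifts of $u$, using $h$ and $h\circ\tau^{-1}$. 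The sub-arcs are one-dimensional, so $h$ restricted to the forward piece from $\tilde u$ to $\tilde\upsilon_k$, and $h\circ\tau^{-1}$ restricted to the backward piece, are solutions of the same scalar ODE chain with prescribed data. If this works, one can then build an eigenfunction of $\mathcal T$ supported on the single arc $[\tau^{-1}\tilde\upsilon_j,\tilde\upsilon_k]$ through $\tilde u$. It is chosen with derivatives balancing at $\tilde u$ via the $\delta$-condition, so that it vanishes on all other branches at $\tilde u$ and is nonzero at $\tilde u$. This yields an eigenfunction whose restriction to $\widetilde T_1$ is not proportional to $h$ whenever $\widetilde T_1$ contains a branch at $\tilde u$ off $L$, which it does, since $u$ is the connecting vertex and $u\in V_\lambda(\Gamma)$. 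If $\widetilde T_1$ contained no other branches, one would reach the same contradiction via Lemma~\ref{Lemma_From_Derived_Graph}(ii) applied to the finite tree $\widetilde T_1$. The delicate point is verifying the derivative balance at $\tilde u$ with the $\alpha_u$ term. If it fails, I would instead compare $h$ with $h\circ\tau$ on the overlap of $\widetilde T_1$ and $\tau\widetilde T_1$ to force the ratio of $h$ at successive lifts of $u$ along $L$ to be consistent only with $h(\tilde u)=0$.
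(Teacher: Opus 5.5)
Your part (a) is correct and coincides with the paper's first step: nonvanishing propagates through degree-$2$ vertices, so every edge of $L$, including both edges at $u$, lies in $E_\lambda(\Gamma)$. Your set-up for (b) with $k$, $j$, $\tau$ and $\widetilde T_1$ is also sound.

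The key step, however, fails. The eigenfunction you propose to build cannot exist. You require it to be nonzero at $\tilde u$ and yet to vanish identically on every other branch at $\tilde u$. Continuity at $\tilde u$, which is part of the $\delta$-condition, forces its value on each incident edge to tend to $f(\tilde u)\neq 0$, and $\deg\tilde u\ge 3$.

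Independently, no eigenfunction of $\mathcal{H}_{\mathcal T}$ can be supported on $[\tau^{-1}\tilde\upsilon_j,\tilde\upsilon_k]$. At the degree-$2$ vertex $\tilde\upsilon_k$ it would vanish and be zero on the outer edge. The $\delta$-condition then makes its derivative on the inner edge zero, so it is identically zero by the very propagation you used in (a).

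The preliminary claim that $\lambda$ is a Dirichlet eigenvalue of the sub-arcs ending at lifts of $u$ is also unsupported. Nothing built from $h$ and $h\circ\tau^{-1}$ vanishes at a lift of $u$, and you are assuming $h(\tilde u)\neq0$.

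Your fallback refers to an overlap that does not exist. If $\tau\tilde u\in\widetilde T_1$, the whole path $\tilde u,\tilde\upsilon_2,\dots,\tau\tilde u$ would lie in $\widetilde T_1$, putting all vertices of $L$ in $V_\lambda(\Gamma)$ and contradicting Theorem~\ref{Q-Aomoto_first_theorem}(i). Moreover, a ratio between consecutive lifts of $u$ is locally consistent with any nonzero value, so a ratio comparison alone forces nothing.

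The missing idea, which is what the paper's terse argument does, is to link $\widetilde T_1$ to $\tau\widetilde T_1$ through the degree-$2$ arc $\tilde\upsilon_k,\dots,\tilde\upsilon_j$ and then invoke square-integrability. At each degree-$2$ vertex the $\delta$-condition determines the outgoing derivative on one edge from the data on the other; in particular $h'_{e_1}=-h'_{e_2}$ where $h$ vanishes. Hence $h$ on $[\tilde\upsilon_k,\tau\tilde u]$ is determined by $h$ on the forward arc of $\widetilde T_1$.

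Write $h|_{\widetilde T_1}=c\,h_1$ by Theorem~\ref{Q-Aomoto_first_theorem}(ii). Then $h=c\,P$ on $[\tilde\upsilon_j,\tau\tilde u]$ for a fixed nonzero solution $P$. On the same arc, $h=c'\,h_1\circ\tau^{-1}$, since this arc lies in $\tau\widetilde T_1$. So either $c=0$ for every eigenfunction, or $c'=\rho c$ with $\rho\neq0$ independent of $h$.

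Iterating, $h|_{\tau^m\widetilde T_1}=\rho^m c\,h_1\circ\tau^{-m}$ for all $m\in\mathbb Z$. These translates are pairwise disjoint, so $\|h\|^2\ge |c|^2\|h_1\|^2\sum_{m\in\mathbb Z}|\rho|^{2m}=\infty$ unless $c=0$. Thus $h(\tilde u)=0$ for every eigenfunction and every lift, that is, $u\notin V_\lambda(\Gamma)$. Your proposal never uses that $h\in L^2$, and without this step no contradiction is reached.
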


\begin{proof}[Proof]
From the $\delta$-type conditions~\eqref{Delta_Type}, it follows that all edges of $L$ lie in $E_{\lambda}(\Gamma)$. Suppose for the sake of contradiction that $u \in V_{\lambda}(\Gamma)$. Then there exists an eigenfunction $h \in \ker(\lambda - \mathcal{H}_{\mathcal{T}})$ which does not vanish on an infinite connected component $X \subseteq \mathcal{T}$, possibly omitting some vertices of degree 2 from $\Xi^{-1}\left(L\right)$ in the middle. Then the inductive argument from the proof of Lemma~\ref{Lemma_From_Derived_Graph} can be applied here.

Indeed, for every $\upsilon \in V(L) \setminus \{u\}$ and each $\tilde{\upsilon} \in \Xi^{-1}(\upsilon)$ with $h(\tilde{\upsilon}) = 0$, let $e_{1}, e_{2}=\left(\tilde{\upsilon},w\right) \in E_{\tilde{\upsilon}}^-$. From (\ref{Delta_Type}) we have
\[
h_{e_{1}}^{\prime}\left(\tilde{\upsilon}\right)=-h_{e_{2}}^{\prime}\left(\tilde{\upsilon}\right).
\]
This allows us to determine the value $\gamma(h)(\tilde{\upsilon}_{e_2})$ from $\gamma(h)(\tilde{\upsilon}_{e_1})$, where $\gamma(h)(\tilde{\upsilon}_{p}) = 0$ (recall the derived graph from the previous section). Subsequently, applying the propagation relations from~\eqref{Matrix_f_g}, we can determine $\gamma(h)(w_p)$ and $\gamma(h)(w_{e_2})$, and continue inductively along the cycle.

By initiating the ``induction'' from a lift $\tilde{u} \in \Xi^{-1}(u) \cap X$, we conclude that the restriction of eigenfunctions to $X$ must lie in a one-dimensional subspace of $L^2(X)$. However, the inherent symmetry of the cycle and translation invariance within $L$ contradict this conclusion, implying that $u \in \partial X_{\lambda}(\Gamma)$ as claimed.
\end{proof}

Based on Theorem~\ref{Q-Aomoto_first_theorem}\,(i)--(ii), we now introduce notation that will be useful in the proofs of the subsequent theorems.

\begin{notation}\label{Notation_h_i_s}
Fix $\lambda \in \sigma_{\mathfrak{p}}(\mathcal{H}_{\mathcal{T}})$. Let $T_{1}, \ldots, T_{m}$ be the Q-Aomoto connected components of $\Gamma = (V, E, \ell, \mathcal{H}_{\Gamma})$ associated with $\lambda$ (so $m=ccX_\lambda(\Gamma)$ - the number of connected components of the Q-Aomoto set). Since $X_\lambda(\Gamma)$ is acyclic, its connected components are trees so they can be identified with the connected components of the lift of $X_\lambda(\Gamma)$ to $\mathcal{T}$. By Theorem~\ref{Q-Aomoto_first_theorem}(ii) the restriction of the space of $\lambda$-eigenfunctions to each such component is one-dimensional, so we may define $h_{1}, \ldots, h_{m}$ to be the normalized functions obtained from the restrictions of the eigenfunctions onto the respective components. These functions are also normalized eigenfunctions of the subgraph
\[
X_{\lambda}(\Gamma) \cup \partial X_{\lambda}(\Gamma) \cup \partial_{\infty}X_{\lambda}(\Gamma)\,,
\]
where now Dirichlet boundary conditions are imposed on $\partial X_{\lambda}(\Gamma)$. Because the potential is real valued, we may assume that the functions $h_1, \ldots, h_m$ are real valued and normalized. Let $S_{i}$ denote the set of disjoint copies of the subgraph $T_{i}$ in the universal cover $\mathcal{T} = (\mathcal{V}, \mathcal{E}, \ell_{\mathcal{T}}, \mathcal{H}_{\mathcal{T}})$. For each $s \in S_{i}$, we define a function $h_{i,s} \in H^{2}(\mathcal{T})$ by
\begin{equation}
h_{i,s}\left(x\right)=\begin{cases}
h_{i}\left(\Xi\left(x\right)\right) & x\in s\\
0 & \textrm{otherwise}.
\end{cases}\,.\label{h_i_s}
\end{equation}
Then every $h\in\ker\left(\lambda-\mathcal{H}_{\mathcal{T}}\right)$ can be decomposed as
\begin{equation}
h=\sum_{i=1}^{m}\sum_{s\in S_{i}}\beta_{s}h_{s},\ \beta_{s}\in\mathbb{C}\,.\label{h_with_alpha}
\end{equation}
Note that the functions $h_{s}$ do not necessarily belong to $\text{Dom}\left(\mathcal{H}_{\mathcal{T}}\right)$, as they are not required to satisfy the vertex conditions at the boundary $\partial X_{\lambda}\left(\Gamma\right)$.\\
\end{notation}

We now construct a bipartite discrete graph $\Gamma^{\prime}=\left(V^{\prime},E^{\prime},a^{\prime},b^{\prime}\right)$ (which is not the derived graph from Definition \ref{Derived_Graph}). The construction follows a procedure similar to that introduced by Banks, Garza-Vargas, and Mukherjee in~\cite[Section 5.1]{Garza-Vargas} for discrete graphs. We define the vertex set as $V^{\prime} = U \cup \partial X_{\lambda}\left(\Gamma\right)$, where $U = \left\{ t_{1}, \dots, t_{m} \right\}$ and each $t_{i}$ corresponds to the Q-Aomoto component $T_{i}$. For each $i \in \left[m\right]$, the vertex $t_{i}$ is referred to as the \textit{representative vertex} of $T_{i}$. The edge set $E^{\prime}$ consists of edges connecting each $t_{i}$ to any vertex $\upsilon \in \partial X_{\lambda}\left(\Gamma\right)$ that is adjacent to $T_{i}$ in the original graph $\Gamma$. Note that a connected component $T_i$ might be a single edge (without its connected vertices). Such an edge is represented by a vertex in $\Gamma^{\prime}$.

For each $i \in [m]$ and each $\upsilon \in \partial X_{\lambda}(\Gamma)$ that is adjacent to the subtree $T_{i}$ at the vertex $u_{i} \in V(T_{i})$, we define the weight
$a_{(t_{i},\upsilon)}^{\prime} := \left(h_{i}\right)_{(u_{i},\upsilon)}^{\prime}(\upsilon)$.

We set $b^{\prime} \equiv 0$. If a vertex is connected to a Q-Aomoto component via multiple edges in the original graph, then multiple corresponding edges will appear in $\Gamma^{\prime}$, resulting in possible edge multiplicities. That is, the bipartite graph $\Gamma^{\prime}$ may include parallel edges between a vertex in $\partial X_{\lambda}(\Gamma)$ and a representative vertex $t_i$ of a Q-Aomoto component. An illustrative example of such a corresponding graph is presented in Figure~\ref{fig:corresponding_graph}.

\begin{minipage}{0.9\textwidth}
\begin{center}
\begin{tikzpicture}[scale=1.5]

\begin{scope}[shift={(0,2.5)}]
\tikzset{
  vertex/.style={circle, draw, line width=0.6pt, minimum size=4mm, inner sep=1pt},
  thickvertex/.style={circle, draw, line width=2pt, minimum size=4mm, inner sep=1pt}
}

\node[vertex] (A) at (0,0) {A};
\node[vertex] (B) at (2,0) {B};
\node[thickvertex] (C) at (1,0.5) {C}; 
\node[thickvertex] (D) at (-1, -0.5) {D};
\node[vertex] (E) at (2.6, -0.4) {E};

\draw[line width=2pt] (A) -- node[below] {1} (B);
\draw[line width=2pt] (A) -- node[left, xshift=-1 pt, yshift=6 pt] {$\frac{1}{2}$} (C);
\draw[line width=2pt] (B) -- node[right, xshift=1 pt, yshift=6 pt] {$\frac{1}{2}$} (C);
\draw[line width=2pt] (D) -- node[left, xshift=-1 pt, yshift=6 pt] {$\frac{1}{2}$} (A);

\draw[line width=0.4pt] (E) -- node[right, xshift=-1 pt, yshift=6 pt] {$\frac{1}{3}$} (B);
\end{scope}

\begin{scope}[shift={(0,0)}]
\node[circle, draw, minimum size=3mm, inner sep=1pt] (A) at (0-0.1,0) {};
\node[circle, draw, minimum size=3mm, inner sep=1pt] (B) at (3*0.75-0.1,0) {};
\node[circle, draw, minimum size=3mm, inner sep=1pt,  line width=2pt] (C) at (1.5*0.75-0.1,0.75) {};
\node[circle, draw, minimum size=3mm, inner sep=1pt,  line width=2pt] (D) at (-1.5*0.75-0.1, -0.75) {};
\node[circle, draw, minimum size=3mm, inner sep=1pt,  line width=2pt] (M) at (1.5*0.75-0.1,-0.2) {}; 

\draw[line width=1pt] (A) -- (D);
\draw[line width=1pt] (A) -- (C);
\draw[line width=1pt] (C) -- (B);
\draw[line width=1pt] (A) -- (M);
\draw[line width=1pt] (B) -- (M);
\end{scope}

\draw[->, thick, line width=1.5pt, shorten >=5pt] (1,1.9) -- (1,0.9);

\end{tikzpicture}

\vspace{1em}

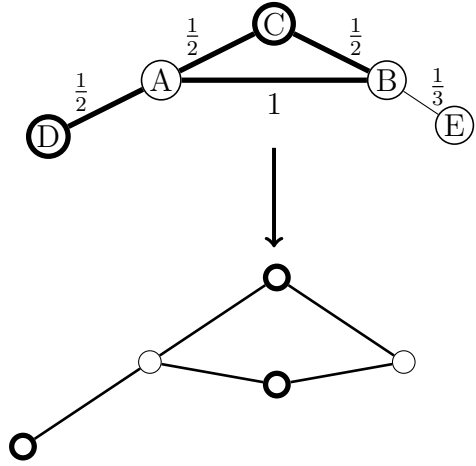
\captionof{figure}{The quantum graph from Figure~\ref{fig:Q-Aomoto_Example} and its associated discrete graph corresponding to the eigenvalue $\lambda = \pi^2$. The Q-Aomoto set for $\lambda$ is highlighted. In the discrete graph below, the vertices corresponding to the connected components of the Q-Aomoto set are also highlighted. Note that the edge $\left(A, B\right)$, representing a Q-Aomoto component of the second type described in Observation~\ref{Observation_in_Q-Aomoto}, is represented as a vertex in the discrete model.}
\label{fig:corresponding_graph}
\end{center}
\end{minipage}

\vspace{1em}
Let $\mathcal{T}^{\prime}$ denote the universal cover of $\Gamma^{\prime}$, let $\Xi^{\prime} : \mathcal{T}^{\prime} \rightarrow \Gamma^{\prime}$ be the associated covering map, and let $A_{\mathcal{T}^{\prime}}$ be the Jacobi operator defined by the parameters $\left\{a',b'\right\}$. We define a map 
\[
\varphi : \ker\left(\lambda - \mathcal{H}_{\mathcal{T}}\right) \overset{\sim}{\longrightarrow} \ker\left(A_{\mathcal{T}^{\prime}}\right)
\]
by
\begin{equation}
\varphi(h) := \varphi\left( \sum_{i=1}^{m} \sum_{s \in S_{i}} \beta_{s} h_{s} \right) := \sum_{i=1}^{m} \sum_{s \in S_{i}} \beta_{s} \delta_{u_{s}} =: \tilde{h}\,, \label{Phi}
\end{equation}
for every $h \in \ker\left(\lambda - \mathcal{H}_{\mathcal{T}}\right)$, where $u_{s}$ denotes the vertex in $\mathcal{T}^{\prime}$ corresponding to the Q-Aomoto connected component $s$, and $\delta_{u_{s}}$ is the indicator function at $u_{s}$.

Following the strategy in \cite{Garza-Vargas} we will first show that $\varphi$ defines an isometric embedding, and subsequently establish that it is an isomorphism. 

\begin{proposition}\label{phi_Isometric_Inclusion}
The map $\varphi$ defined in~\eqref{Phi} is an isometric inclusion of $\ker\left(\lambda - \mathcal{H}_{\mathcal{T}}\right)$ into $\ker\left(A_{\mathcal{T}^{\prime}}\right)$.
\end{proposition}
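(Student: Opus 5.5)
The plan is to check three things in order: that $\varphi$ is well defined, that it preserves norms, and that its image lies in $\ker(A_{\mathcal{T}'})$. Before any of these, I need to fix a concrete model of $\mathcal{T}'$ adapted to $\mathcal{T}$.

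\textbf{A concrete model of $\mathcal{T}'$.} Build an auxiliary discrete graph $\widetilde{\mathcal{T}}$ as follows.
\begin{itemize}
\item Its vertices are the connected components $s\in\bigcup_i S_i$ of $\Xi^{-1}(X_\lambda(\Gamma))$, together with the lifts $\tilde w\in\Xi^{-1}(\partial X_\lambda(\Gamma))$.
\item We join $s$ to $\tilde w$ once for every edge of $\mathcal{T}$ that runs from a vertex of $s$ (or, for a component of the second type in Observation~\ref{Observation_in_Q-Aomoto}, from an endpoint of the edge $s$) to $\tilde w$.
\end{itemize}
The map $s\mapsto t_i$ for $s\in S_i$, together with $\tilde w\mapsto\Xi(\tilde w)$, is a local isomorphism $\widetilde{\mathcal{T}}\to\Gamma'$. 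This holds because $\Xi$ is a local isomorphism, and each $s$ is an isomorphic copy of the tree $T_i$ by Theorem~\ref{Q-Aomoto_first_theorem}(i); so the edges leaving $s$ toward boundary lifts biject with the edges $(t_i,\upsilon)$ of $\Gamma'$, parallel edges included.

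Next, $\widetilde{\mathcal{T}}$ is a tree. Any cycle in $\widetilde{\mathcal{T}}$ would lift to a cycle in $\mathcal{T}$: connect the entry and exit points inside each (connected) $s$ by a path in $s$. Since $\widetilde{\mathcal{T}}$ is a connected tree covering $\Gamma'$, it is the universal cover $\mathcal{T}'$, and $u_s$ in \eqref{Phi} is the vertex $s$. The weights on $\mathcal{T}'$ are pulled back from $a'$. Because $h_i$ is real, these weights are real, so $a'_{\check e}=\overline{a'_e}$ holds once we set $a'_{(\upsilon,t_i)}=a'_{(t_i,\upsilon)}$.

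\textbf{Well-definedness and isometry.} By \eqref{h_with_alpha}, every $h\in\ker(\lambda-\mathcal{H}_{\mathcal{T}})$ is a sum $\sum\beta_s h_s$. The supports of distinct $h_s$ are disjoint up to the finite vertex sets of the boundary, which have measure zero. Hence the coefficients are unique: $\beta_s=\langle h,h_s\rangle$, since each $h_s$ is normalized. So $\varphi$ is well defined and linear. By orthogonality,
\[
\|h\|^2=\sum_s|\beta_s|^2=\|\tilde h\|^2_{\ell^2(\mathcal{T}')},
\]
which gives isometry, and in particular injectivity.

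\textbf{The image lies in $\ker A_{\mathcal{T}'}$.} Representative vertices have no neighbours among representative vertices, and $b'\equiv0$. So $(A_{\mathcal{T}'}\tilde h)(u_s)$ is a combination of the values of $\tilde h$ on boundary lifts, which are zero; hence it vanishes.

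At a boundary lift $\tilde w$ we get
\[
(A_{\mathcal{T}'}\tilde h)(\tilde w)=\sum_{e} \beta_{s(e)}\,(h_{i(e)})'_{e}(w),
\]
the sum running over edges $e$ of $\mathcal{T}$ at $\tilde w$ that enter Q-Aomoto components. This is exactly $\sum_{e\in E^-_{\tilde w}}h'_e(\tilde w)$ (up to one global sign fixed by the orientation convention), for the following reasons:
\begin{itemize}
\item Near $\tilde w$, $h=\beta_s h_s$ on each edge belonging to $s$.
\item $h\equiv0$ on edges at $\tilde w$ that lie in no Q-Aomoto component, by definition of $E_\lambda(\Gamma)$.
\item $\tilde w\notin V_\lambda$ forces $h(\tilde w)=0$.
\end{itemize}
Since $\alpha_w<\infty$ on $\partial X_\lambda(\Gamma)$, the $\delta$-condition \eqref{Delta_Type} gives $\sum h'_e(\tilde w)=\alpha_w h(\tilde w)=0$. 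Finally, $\tilde h\in\ell^2$ by the isometry.

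\textbf{Main obstacle.} I expect the main difficulty to be the identification of $\widetilde{\mathcal{T}}$ with $\mathcal{T}'$, where some bookkeeping needs care:
\begin{itemize}
\item parallel edges of $\Gamma'$;
\item second-type components, where a single edge of $\Gamma$ becomes a vertex of $\Gamma'$ with two boundary neighbours (possibly the same vertex twice);
\item checking that no edge of $\mathcal{T}$ joins two boundary lifts in a way that matters. Such an edge carries $h\equiv0$, so it contributes nothing; it simply is not represented in $\Gamma'$.
\end{itemize}
I also have to confirm the sign convention of the weights $a'$, so that the vertex sum matches \eqref{Delta_Type} exactly.
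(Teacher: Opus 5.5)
Your proposal is correct and follows the paper's argument. The isometry comes from the orthonormality of the disjointly supported $h_s$. Bipartiteness of $\mathcal{T}'$ together with $b'\equiv 0$ reduces the kernel check to boundary lifts $\tilde w$, and there the $\delta$-condition gives $\sum_{e\in E^-_{\tilde w}}h'_e(\tilde w)=\alpha_w h(\tilde w)=0$; your extra bookkeeping (uniqueness of $\beta_s=\langle h,h_s\rangle$, the sign convention for $a'$, the explicit model $\widetilde{\mathcal{T}}$) only fills in what the paper leaves implicit.

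One correction: $\widetilde{\mathcal{T}}$ is a forest but need not be connected. It cannot be connected if $\Gamma'$ is disconnected, and it falls apart into infinitely many pieces whenever $\Gamma$ has a cycle not contained in $X_\lambda(\Gamma)\cup\partial X_\lambda(\Gamma)$. So you should take $\mathcal{T}'$ to be this forest of universal covers of components of $\Gamma'$, which is what the definition of $u_s$ in \eqref{Phi} tacitly requires anyway; since all your checks are local, nothing else changes.
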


\begin{proof}[Proof]
The isometry follows immediately from the normalization of each function $h_{s}$. It remains to verify that $\varphi(h) \in \ker\left(A_{\mathcal{T}^{\prime}}\right)$. Since $\mathcal{T}^{\prime}$ is a bipartite graph and $b^{\prime} \equiv 0$, it suffices to show that
\[
\left(A_{\mathcal{T}^{\prime}} \varphi(h)\right)(\upsilon) = 0
\]
for every $\upsilon \in \Xi^{\prime -1} \left( \partial X_{\lambda}(\Gamma) \right)$. Indeed, in this case we have
\[
\left(A_{\mathcal{T}^{\prime}} \varphi(h)\right)(\upsilon) = \sum_{e \in E^{-}_{\upsilon}} h_{e}^{\prime}(\upsilon) = \alpha_{\upsilon} h(\upsilon) = 0
\]
as required.
\end{proof}

\begin{proposition}\label{Aomoto_Preservation}
Recall Definition~\ref{Q-Aomoto_def} of the descrete Aomoto set. Let $G_{1}^{\prime},...,G_{n}^{\prime}$ denote the connected components of $\Gamma^{\prime}$, and define $U_{j} := G_{j}^{\prime} \cap U$ for each $j = 1, \ldots, n$. Then the Aomoto set of $G_j$ corresponding to the eigenvalue $0$ satisfies 
\[
X_{0}\left(G_{j}^{\prime}\right)=U_{j}\,.
\]
\end{proposition}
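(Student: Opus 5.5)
Both inclusions $U_j\subseteq X_0(G_j')$ and $X_0(G_j')\subseteq U_j$ have to be proved.

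\textbf{The inclusion $U_j\subseteq X_0(G_j')$.} Let $t_i\in U_j$. By definition $T_i$ is a connected component of the Q-Aomoto set $X_\lambda(\Gamma)$. Hence there is an eigenfunction $h\in\ker(\lambda-\mathcal{H}_{\mathcal{T}})$ whose restriction to some lift $s\in S_i$ is not identically zero. In the decomposition \eqref{h_with_alpha} this means $\beta_s\neq 0$. By Proposition~\ref{phi_Isometric_Inclusion}, $\varphi(h)\in\ker(A_{\mathcal{T}'})$ and $\varphi(h)(u_s)=\beta_s\neq0$, with $\Xi'(u_s)=t_i$.

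The plan is to note that $\mathcal{T}'$ is a disjoint union of copies of the universal covers of the components $G_j'$. Restricting $\varphi(h)$ to the copy containing $u_s$ gives a $0$-eigenfunction of the universal cover of $G_j'$ that does not vanish at a lift of $t_i$. Hence $t_i\in X_0(G_j')$.

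\textbf{The inclusion $X_0(G_j')\subseteq U_j$.} It suffices to show that no vertex $\upsilon\in\partial X_\lambda(\Gamma)$ lies in $X_0(G_j')$. I would use the bipartite structure together with $b'\equiv0$. Take any $\eta\in\ker(A_{\mathcal{T}'})$ and split it as $\eta=\eta_U+\eta_\partial$, according to whether the vertices lie over $U$ or over $\partial X_\lambda(\Gamma)$. Since $A_{\mathcal{T}'}$ maps functions on one side of the bipartition to functions on the other side, each part is separately in the kernel. So the goal is to prove that $\eta_\partial=0$.

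To do this, I would pull $\eta_\partial$ back to the quantum tree. Each vertex $\tilde\upsilon$ of $\mathcal{T}'$ over $\partial X_\lambda(\Gamma)$ corresponds to a vertex of $\mathcal{T}$ with $\alpha_{\tilde\upsilon}<\infty$ and $\tilde\upsilon\notin V_\lambda$. Its neighbors in $\mathcal{T}'$ are lifts of components $s$ that are attached to $\tilde\upsilon$ along the edges $(u_s,\tilde\upsilon)$, and the weights are $h_s'(\tilde\upsilon)$.

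The condition $(A_{\mathcal{T}'}\eta_\partial)(u_s)=0$ reads
\[
\sum_{\tilde\upsilon\sim u_s}h_s'(\tilde\upsilon)\,\eta(\tilde\upsilon)=0 .
\]
I would interpret this as a Green/Wronskian identity on $s$. Let $k_s$ be the solution of the eigenvalue equation on $s$, satisfying the vertex conditions in the interior of $s$, with Dirichlet values $k_s=\eta(\tilde\upsilon)$ at the boundary vertices. Integrating by parts against the Dirichlet eigenfunction $h_s$, the boundary term is exactly the displayed sum. Since $\lambda$ is an eigenvalue of the Dirichlet problem on $s$, the solution $k_s$ exists precisely under this solvability condition.

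Gluing the functions $k_s$, together with suitable functions on edges outside the lifted Q-Aomoto set, would then produce an $L^2$ eigenfunction of $\mathcal{H}_{\mathcal{T}}$ that is nonzero at $\tilde\upsilon$. That contradicts $\upsilon\notin V_\lambda(\Gamma)$.

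\textbf{Main obstacle.} The gluing step is the hard part, for three reasons:
\begin{itemize}
\item making the vertex conditions at $\tilde\upsilon$ compatible requires adding multiples of the $h_s$ to adjust derivatives;
\item solutions must be extended across edges not in $X_\lambda$;
\item square-integrability must be controlled.
\end{itemize}

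A cleaner alternative that I would try first is a counting or symmetry argument, as in \cite{Garza-Vargas}. Suppose some $\upsilon$ lay in $X_0(G_j')$. By the discrete Aomoto theory \cite{Garza-Vargas}, the $0$-eigenspace, restricted to a connected component of the lift of $X_0(G_j')$, is one-dimensional and supported on an acyclic set. Combined with the local relations above, this would force a nonzero value at a lift of $\upsilon$. Translating back via the edge-by-edge propagation \eqref{Matrix_f_g} would then yield a quantum eigenfunction that does not vanish at a lift of $\upsilon$, again contradicting the definition of $\partial X_\lambda(\Gamma)$.
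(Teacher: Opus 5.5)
Your first inclusion $U_j\subseteq X_0(G_j')$ is correct and is the paper's argument: an $h$ not vanishing on a lift $s$ of $T_i$ gives $\beta_s\neq0$, and $\varphi(h)\in\ker(A_{\mathcal{T}'})$. Reducing the second inclusion to showing that no $\upsilon\in\partial X_\lambda(\Gamma)$ lies in $X_0(G_j')$ is also fine, and so is the splitting $\eta=\eta_U+\eta_\partial$ with both parts in the kernel.

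The gap is that neither of your routes proves $\eta_\partial=0$. The Fredholm/gluing route needs an $L^2$ eigenfunction of $\mathcal{H}_{\mathcal{T}}$ on the whole infinite tree. At each lifted boundary vertex, the vertex condition also involves edges outside $E_\lambda(\Gamma)$. Any solution placed on those edges must then be continued through the rest of $\mathcal{T}$, about which you know nothing, while meeting every vertex condition and staying square integrable. Your argument has no mechanism for this, and you leave exactly these obstacles open. The ``alternative'' stops at the same step (``translating back \dots would then yield a quantum eigenfunction''), so it inherits the gap and is not yet an argument.

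The missing idea is that you never need to return to the quantum tree. The contradiction lives entirely in the discrete graph and uses only $b'\equiv0$ and acyclicity.

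By \cite[Theorem 3.1(i)]{Garza-Vargas}, $X_0(G_j')$ is acyclic, so every component of its lift to $\mathcal{T}'$ is a finite tree. Suppose some $\upsilon\in\partial X_\lambda(\Gamma)$ lay in $X_0(G_j')$. Since $E_\upsilon\cap E_\lambda(\Gamma)\neq\emptyset$, $\upsilon$ is adjacent in $\Gamma'$ to some $t_i$, and $t_i\in X_0(G_j')$ by the first inclusion. So the component of $X_0(G_j')$ containing $\upsilon$ has at least two vertices.

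Take $\eta\in\ker(A_{\mathcal{T}'})$ nonvanishing on a lift $S$ of this component (a generic finite linear combination). Let $\tilde w$ be a leaf of $S$ with unique neighbour $w\in S$. Every other neighbour of $\tilde w$ lies outside the lifted Aomoto set, where $\eta=0$, and there is no potential term. Hence $(A_{\mathcal{T}'}\eta)(\tilde w)$ is the weight of the edge $\{w,\tilde w\}$ times $\eta(w)$. That weight has the form $(h_i)'_{(u_i,\upsilon)}(\upsilon)$ with $h_i(\upsilon)=0$ and $h_i\not\equiv0$ on that edge, so it is nonzero, and therefore $(A_{\mathcal{T}'}\eta)(\tilde w)\neq0$. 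This contradiction shows that $X_0(G_j')$ is an independent set.

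Finally, $X_0(G_j')$ contains $U_j$. Since every boundary vertex is adjacent to some $t_i$, $U_j$ is a maximal independent set in $G_j'$, so $X_0(G_j')=U_j$. This is precisely the paper's proof.
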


\begin{proof}[Proof]
Denote by $T_1, \ldots, T_m$ the Aomoto components of $\Gamma$ such that their corresponding representative vertices $U_j = \{t_1, \ldots, t_m\}$ are in $G_j'$. For each $i$, there exists a function $h \in \ker(\lambda - \mathcal{H}_{\mathcal{T}})$ such that $\varphi(h)(s_i) \neq 0$ for some $s_i \in \Xi^{\prime -1}(t_i)$. By Proposition~\ref{phi_Isometric_Inclusion}, we have $\varphi(h) \in \ker(A_{\mathcal{T}'})$, and thus $U_j \subseteq X_0(G_j')$.

According to~\cite[Theorem 3.1(i)]{Garza-Vargas}, the Aomoto components of a discrete graph are acyclic, and therefore $X_0(G_j')$ forms a forest in $G_j'$. Suppose, for contradiction, that some vertices in $X_0(G_j')$ are connected. Then there exists a tree $T_i \subseteq X_0(G_j')$ with more than one vertex, and a function $\varphi(h) \in \ker(A_{\mathcal{T}'})$ such that $\varphi(h)(u) \neq 0$ for every $u$ in some lift $S \subseteq \Xi^{\prime -1}(T_i)$ of $T_i$.

However, since $A_{\mathcal{T}'}$ has zero potential, we must have $\left(A_{\mathcal{T}'}\varphi(h)\right)(\tilde{\upsilon}) \neq 0$ for a leaf $\tilde{\upsilon}$ in $S$, yielding a contradiction. Therefore, $X_0(G_j')$ must be an independent set. From the construction of $G_j'$, it follows that $U_j$ is a maximal independent set in $G_j'$, and thus $X_0(G_j') = U_j$.
\end{proof}

\begin{proposition}\label{Isomorthism_Proposition}
The map $h\mapsto\tilde{h}$ is an isometric isomorphism between $\ker\left(\lambda-\mathcal{H}_{\mathcal{T}}\right)$
and $\ker\left(A_{\mathcal{T}^{\prime}}\right)$.
\end{proposition}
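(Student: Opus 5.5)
Since Proposition~\ref{phi_Isometric_Inclusion} already shows that $\varphi$ is an isometric linear embedding of $\ker(\lambda-\mathcal{H}_{\mathcal{T}})$ into $\ker(A_{\mathcal{T}'})$, it remains to prove surjectivity. Given $\eta\in\ker(A_{\mathcal{T}'})$, I will construct the preimage explicitly. By Proposition~\ref{Aomoto_Preservation}, applied to each connected component $G_j'$ of $\Gamma'$, the Aomoto set $X_0(G_j')$ equals $U_j$. Hence every element of $\ker(A_{\mathcal{T}'})$ vanishes on all lifts of vertices of $\partial X_\lambda(\Gamma)$. I will need to check that the universal cover of $\Gamma'$ splits as a disjoint union of the universal covers of the $G_j'$, so that the component-wise Aomoto statement applies to $\mathcal{T}'$. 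Consequently $\eta=\sum_{u}\eta(u)\delta_u$, where the sum runs only over lifts $u$ of representative vertices $t_i$.

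The second step is to identify lifts of representative vertices in $\mathcal{T}'$ with the copies $s\in S_i$ of Q-Aomoto components in $\mathcal{T}$, and lifts of boundary vertices in $\mathcal{T}'$ with lifts of $\partial X_\lambda(\Gamma)$ in $\mathcal{T}$. Concretely, I will contract each copy $s$ in the subforest $\Xi^{-1}(X_\lambda(\Gamma))\cup\Xi^{-1}(\partial X_\lambda(\Gamma))$ of $\mathcal{T}$ to a single point. The claim is that the resulting graph, restricted to the relevant component structure, is a tree that covers $\Gamma'$ and is isomorphic to $\mathcal{T}'$, since it is a tree covering $\Gamma'$ with matching local structure. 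This is the step I expect to be the main obstacle. The difficulty is making this identification rigorous, including the multi-edges of $\Gamma'$ and the case where parts of $\mathcal{T}$ are not reached. If a full isomorphism is awkward, I will instead build an injective graph map from $\mathcal{T}'$ into the contracted tree that is a local isomorphism; by uniqueness of the universal cover this is an isomorphism onto a connected component, which suffices.

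With the identification in place, I define $h:=\sum_{i}\sum_{s\in S_i}\eta(u_s)h_s$. Three things must be verified.
\begin{itemize}
\item \emph{$h\in L^2$.} The $h_s$ have disjoint supports (up to boundary vertices of measure zero) and norm $1$, so $\|h\|^2=\|\eta\|^2$.
\item \emph{$h\in\bigoplus H^2$ with $(\mathcal{H}_{\mathcal{T}}-\lambda)h=0$ on each edge.} Each $h_s$ solves the eigenvalue equation on its support, and $h$ vanishes identically on edges outside $\Xi^{-1}(X_\lambda)$. The $H^2$ norms are controlled uniformly because there are only finitely many $h_i$.
\item \emph{Vertex conditions.} Inside a copy $s$ they hold because $h_i$ satisfies them there. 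At lifts of $\partial_\infty X_\lambda$ they hold by the Dirichlet condition. At a lift $\tilde v$ of $v\in\partial X_\lambda(\Gamma)$, continuity holds since all contributing $h_s$ vanish there. The $\delta$-condition at $\tilde v$ reads $\sum_e h'_e(\tilde v)=\alpha_v\cdot0=0$. The left-hand side equals $\sum_{s\ni\tilde v}\eta(u_s)a'_{(t_i,v)}=(A_{\mathcal{T}'}\eta)(\tilde v)=0$ by the choice of weights $a'$.
\end{itemize}
Hence $h\in\ker(\lambda-\mathcal{H}_{\mathcal{T}})$ and $\varphi(h)=\eta$, which gives surjectivity.

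Finally, the decomposition~\eqref{h_with_alpha} shows that $\varphi$ is well defined, because the coefficients $\beta_s$ are determined by restriction to each $s$. Together with the isometry, this yields the isometric isomorphism.
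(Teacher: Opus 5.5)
Your argument is the paper's: Proposition~\ref{Aomoto_Preservation} forces every $\eta\in\ker(A_{\mathcal{T}'})$ to vanish on the lifts of $\partial X_\lambda(\Gamma)$, so $\eta=\sum_s\beta_s\delta_{u_s}$ and $\sum_s\beta_s h_s$ is the preimage; you also check explicitly that this function lies in $\operatorname{Dom}(\mathcal{H}_{\mathcal{T}})$ (the $\delta$-condition at a lift $\tilde v$ of a boundary vertex is exactly $(A_{\mathcal{T}'}\eta)(\tilde v)=0$), a step the paper leaves implicit. One caution about your second step, which really belongs to the definition of $\varphi$ rather than to this proof: contracting the copies $s$ in $\mathcal{T}$ generally produces a forest, not a tree, with one copy of the universal cover of $G_j'$ for each connected component of the preimage in $\mathcal{T}$ of $Y_j$ (the union of the $T_i$ with $t_i\in G_j'$ and their boundary vertices), and there are infinitely many such components whenever some cycle of $\Gamma$ is not contained in $Y_j$; so the bijection $s\leftrightarrow u_s$ exists only if $\mathcal{T}'$ is read as this whole forest, and mapping a single universal cover onto one component, as in your fallback, would not suffice.
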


\begin{proof}[Proof]
We want to prove that $\varphi$ is onto. Indeed, for every $\eta\in\ker\left(A_{\mathcal{T}^{\prime}}\right)$,
from Proposition \ref{Aomoto_Preservation}, there are $\beta_{s}$ such that
\[
\eta=\sum_{i=1}^{m}\sum_{s\in S_{i}}\beta_{s}\delta_{u_{s}}
\]
with the notations of Notation \ref{Notation_h_i_s}. Then
\[
\varphi\left(\sum_{i=1}^{m}\sum_{s\in S_{i}}\beta_{s}h_{s}\right)=\eta\,.
\]
\end{proof}

\begin{lemma}\label{From_Eigenfunctions_to_mu_1}
\begin{enumerate}[label=\roman*.]
\item Let $G = (V, E, a, b)$ be a finite weighted discrete graph, $\mathcal{T}$ its universal cover with covering map $\Xi: \mathcal{T} \to G$, and let $\lambda \in \sigma_{\mathfrak{p}}(A_{\mathcal{T}})$. Let $\mathcal{B}$ be an orthonormal basis of $\ker(\lambda - A_{\mathcal{T}})$. Then for every $u \in V$ and every $\tilde{u} \in \Xi^{-1}(u)$, the spectral measure associated with $\delta_{\tilde{u}}$, $\mu_u$, is independent of the choice of $\tilde{u} \in \Xi^{-1}(u)$ and satisfies
\[
\mu_u(\{\lambda\}) = \sum_{\eta \in \mathcal{B}} \left| \eta(\tilde{u}) \right|^2\,.
\]

\item Let $\Gamma = (V, E, \ell, \mathcal{H}_{\Gamma})$ be a compact quantum graph with $\delta$-type vertex conditions and a Schr\"odinger-type differential operator $\mathcal{H}_{\Gamma}$. Let $\mathcal{T}$ be its universal cover with covering map $\Xi: \mathcal{T} \to \Gamma$, and let $\lambda \in \sigma_{\mathfrak{p}}(\mathcal{H}_{\mathcal{T}})$. Let $\mathcal{B}$ be an orthonormal basis of $\ker(\mathcal{H}_{\mathcal{T}} - \lambda)$. Then, for every $g \in L^2(\mathcal{T})$ with spectral measure $\mu_g$, we have
\[
\mu_g(\{\lambda\}) = \sum_{h \in \mathcal{B}} \left| \langle g, h \rangle \right|^2\,.
\]
\end{enumerate}
\end{lemma}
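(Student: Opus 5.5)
Both parts reduce to the spectral theorem, with the spectral projection onto the eigenspace doing the work. For a self-adjoint operator $A$ (here $A_{\mathcal{T}}$ or $\mathcal{H}_{\mathcal{T}}$) with projection-valued measure $P$, the spectral measure of a vector $g$ is $\mu_g(B)=\langle P(B)g,g\rangle$. Hence
\[
\mu_g(\{\lambda\})=\langle P(\{\lambda\})g,g\rangle=\|P(\{\lambda\})g\|^2 .
\]
Since $P(\{\lambda\})$ is the orthogonal projection onto $\ker(A-\lambda)$, expanding $P(\{\lambda\})g$ in the orthonormal basis $\mathcal{B}$ gives $P(\{\lambda\})g=\sum_{h\in\mathcal{B}}\langle g,h\rangle h$. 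Parseval then yields $\|P(\{\lambda\})g\|^2=\sum_{h\in\mathcal{B}}|\langle g,h\rangle|^2$. If $\mathcal{B}$ is infinite, the sum converges by Bessel's inequality. This proves (ii) directly, for any $g\in L^2(\mathcal{T})$.

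For (i), apply the same identity with $g=\delta_{\tilde u}\in\ell^2(\mathcal{V})$. Because $\langle\delta_{\tilde u},\eta\rangle=\overline{\eta(\tilde u)}$, we get $\mu_{\delta_{\tilde u}}(\{\lambda\})=\sum_{\eta\in\mathcal{B}}|\eta(\tilde u)|^2$.

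It remains to show that $\mu_{\delta_{\tilde u}}$ depends only on $u=\Xi(\tilde u)$, not on the chosen lift. Take two lifts $\tilde u,\tilde u'\in\Xi^{-1}(u)$. By the lifting property of universal covers, there is a deck transformation $\tau$ of $\mathcal{T}$ with $\tau(\tilde u)=\tilde u'$, and $\tau$ preserves adjacency, the weights $\mathbf a$ and the potential $\mathbf b$, since these are pulled back from $G$. The induced map $U_\tau\eta=\eta\circ\tau^{-1}$ is unitary on $\ell^2(\mathcal{V})$, commutes with $A_{\mathcal{T}}$ by \eqref{Discrete_Laplace}, and sends $\delta_{\tilde u}$ to $\delta_{\tilde u'}$. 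Therefore
\[
\mu_{\delta_{\tilde u'}}(B)=\langle P(B)U_\tau\delta_{\tilde u},U_\tau\delta_{\tilde u}\rangle=\langle U_\tau P(B)\delta_{\tilde u},U_\tau\delta_{\tilde u}\rangle=\mu_{\delta_{\tilde u}}(B)
\]
for every Borel set $B$. As a consequence, the sum $\sum_{\eta\in\mathcal{B}}|\eta(\tilde u)|^2$ is also independent of the lift, and of the choice of basis $\mathcal{B}$.

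The only non-routine step is the existence of deck transformations acting transitively on each fibre $\Xi^{-1}(u)$. This is standard: the universal cover is a regular (Galois) cover whose deck group is $\pi_1(G)$. Beyond that, the argument is a direct application of the functional calculus.
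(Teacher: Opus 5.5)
Your proposal is correct and follows the paper's proof: write $\mu_g(\{\lambda\})=\|P_{\{\lambda\}}g\|^2$, expand in the orthonormal basis $\mathcal{B}$ of the eigenspace, and take $g=\delta_{\tilde u}$ for the discrete case. Your deck-transformation argument (a unitary commuting with $A_{\mathcal{T}}$ that sends $\delta_{\tilde u}$ to $\delta_{\tilde u'}$) simply makes rigorous the paper's one-line appeal to ``symmetry'' for independence of the lift.
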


\begin{proof}[Proof]
Let $P_{\left\{ \lambda\right\} }$ denote the spectral projection onto the eigenspace corresponding to the eigenvalue \( \lambda \). Then, for the discrete case, we have:
\begin{align*}
& \mu_{u}\left(\left\{ \lambda\right\} \right)=\left\langle P_{\left\{ \lambda\right\} }\delta_{\tilde{u}},\delta_{\tilde{u}}\right\rangle =\left\Vert P_{\left\{ \lambda\right\} }\delta_{\tilde{u}}\right\Vert ^{2}=\sum_{\eta\in\mathcal{B}}\left|\left\langle P_{\left\{ \lambda\right\} }\delta_{\tilde{u}},\eta\right\rangle \right|^{2}=\\
& =\sum_{h\in\mathcal{B}}\left|\left\langle \delta_{\tilde{u}},P_{\left\{ \lambda\right\} }\eta\right\rangle \right|^{2}=\sum_{h\in\mathcal{B}}\left|\left\langle \delta_{\tilde{u}},\eta\right\rangle \right|^{2}=\sum_{h\in\mathcal{B}}\left|\eta\left(\tilde{u}\right)\right|^{2}\,.
\end{align*}
The independence from the choice of $\tilde{u}$ follows from the symmetry. 

For the quantum (continuum) case, one similarly obtains:
\[
\mu_{g}\left(\left\{ \lambda\right\} \right)=\sum_{h\in\mathcal{B}}\left|\left\langle P_{\left\{ \lambda\right\} }g,h\right\rangle \right|^{2}=\sum_{h\in\mathcal{B}}\left|\left\langle g,P_{\left\{ \lambda\right\} }h\right\rangle \right|^{2}=\sum_{h\in\mathcal{B}}\left|\left\langle g,h\right\rangle \right|^{2}\,.
\]
\end{proof}

\begin{lemma}\label{From_Eigenfunctions_to_mu_2}
Let \( \Gamma = (V, E, \ell, \mathcal{H}_{\Gamma}) \) be a compact quantum graph with \(\delta\)-type vertex conditions and a Schr\"odinger-type differential operator \( \mathcal{H}_{\Gamma} \). Let \( \mathcal{T} \) be its universal cover with covering map \( \Xi : \mathcal{T} \to \Gamma \), and let \( \lambda \in \sigma_{\mathfrak{p}}(\mathcal{H}_{\mathcal{T}}) \). 

Let \( \Gamma' = (V', E', a', b') \) be the corresponding bipartite discrete graph, and let \( T \subseteq \Gamma \) be a Q-Aomoto component with corresponding vertex \( t \in V(\Gamma') \). Denote by \( \mu_t' \) the spectral measure of the indicator function \( \delta_t \).

Let \( \{g_k\}_{k \in \mathbb{N}} \) be an orthonormal basis of \( L_{2}(T) = \bigoplus_{e \in T} L_{2}(e) \), and let \( \mu_{g_k} \) be the spectral measure associated with the function \( g_k \circ \Xi|_{\widetilde{T}} \), where \( \widetilde{T} \subseteq \mathcal{T} \) is a copy of \( T \). Then
\[
\sum_{k\in\mathbb{N}}\mu_{g_{k}}\left(\left\{ \lambda\right\} \right)=\mu_{t}^{\prime}\left(\left\{ 0\right\} \right)\,.
\]
\end{lemma}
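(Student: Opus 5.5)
The plan is to compute both sides explicitly in terms of an orthonormal basis of the eigenspace and match them through the isometric isomorphism $\varphi$ of Proposition~\ref{Isomorthism_Proposition}.

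\emph{The left-hand side.} Fix an orthonormal basis $\mathcal{B}$ of $\ker(\lambda-\mathcal{H}_{\mathcal{T}})$. By Lemma~\ref{From_Eigenfunctions_to_mu_1}(ii),
\[
\sum_k \mu_{g_k}(\{\lambda\})=\sum_k\sum_{h\in\mathcal{B}}\bigl|\langle g_k\circ\Xi|_{\widetilde T},h\rangle\bigr|^2 .
\]
Every term is nonnegative, so Tonelli lets us swap the two sums. The functions $\{g_k\circ\Xi|_{\widetilde T}\}_k$ form an orthonormal basis of $L_2(\widetilde T)$, viewed as a subspace of $L_2(\mathcal{T})$. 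Parseval in $L_2(\widetilde T)$ therefore gives
\[
\sum_k\bigl|\langle g_k\circ\Xi|_{\widetilde T},h\rangle\bigr|^2=\|h|_{\widetilde T}\|^2 .
\]
Hence the left-hand side equals $\sum_{h\in\mathcal{B}}\|h|_{\widetilde T}\|^2$. This also shows the sum is independent of the choice of $\{g_k\}$.

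\emph{Rewriting $\|h|_{\widetilde T}\|^2$.} Use the decomposition \eqref{h_with_alpha} from Notation~\ref{Notation_h_i_s}. The pieces $h_s$ have pairwise disjoint supports up to boundary vertices, which have measure zero, since distinct copies of Q-Aomoto components overlap only at such vertices. The copy $\widetilde T$ is exactly one of the $s\in S_i$, say $s_0$, where $T=T_i$. Each $h_{i}$ is normalized. Therefore
\[
\|h|_{\widetilde T}\|^2=|\beta_{s_0}|^2=|\varphi(h)(u_{s_0})|^2 .
\]

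\emph{The right-hand side and matching.} By Proposition~\ref{Isomorthism_Proposition}, $\varphi$ is an isometric isomorphism, so $\varphi(\mathcal{B})$ is an orthonormal basis of $\ker(A_{\mathcal{T}'})$. Here $u_{s_0}$ is a lift of $t$ in $\mathcal{T}'$, and the correspondence $s\mapsto u_s$ is compatible with the two covering structures. Lemma~\ref{From_Eigenfunctions_to_mu_1}(i), applied to $\Gamma'$ at eigenvalue $0$, then gives
\[
\mu'_t(\{0\})=\sum_{\eta\in\varphi(\mathcal{B})}|\eta(u_{s_0})|^2=\sum_{h\in\mathcal{B}}|\beta_{s_0}(h)|^2 .
\]
This equals the left-hand side, which proves the lemma.

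\emph{Expected obstacle.} The delicate point is bookkeeping rather than analysis. One must check that each copy $\widetilde T$ corresponds to a single vertex $u_{s_0}$ lying over $t$. One must also check that the universal cover $\mathcal{T}'$ is indexed compatibly with the components $S_i$ in $\mathcal{T}$. This is needed so that Lemma~\ref{From_Eigenfunctions_to_mu_1}(i) applies, including its independence of the choice of lift. I would handle this by recalling that $\varphi$ was built precisely via the identification $s\leftrightarrow u_s$, and by using translation invariance on both sides.
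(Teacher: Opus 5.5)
Your proposal is correct and follows essentially the same route as the paper. Lemma~\ref{From_Eigenfunctions_to_mu_1}(ii), a Tonelli swap and Parseval on $L_2(\widetilde T)$ reduce the left side to $\sum_{h\in\mathcal{B}}\|h\|^2_{L^2(\widetilde T)}$, which the isometric isomorphism $\varphi$ and Lemma~\ref{From_Eigenfunctions_to_mu_1}(i) turn into $\sum_{h\in\mathcal{B}}|\varphi(h)(\tilde t)|^2=\mu'_t(\{0\})$; your use of the decomposition \eqref{h_with_alpha} and the normalization of the $h_i$ to get $\|h|_{\widetilde T}\|^2=|\beta_{s_0}|^2$ just spells out the step the paper attributes to $\varphi$ being an isometry.
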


\begin{proof}[Proof]
Let \( \mathcal{T}' \) be the universal cover of \( \Gamma' \) and let \( \tilde{t} \in \mathcal{T}' \) be the vertex corresponding to \( \widetilde{T} \). Then, since $\varphi$ is an isometry, we have that for every \( h \in \ker(\mathcal{H}_{\mathcal{T}} - \lambda) \) 
\[
|\varphi\left(h\right)\left(\tilde{t}\right)|^2 = \sum_{\tilde{e} \in E(\widetilde{T})} \int_{\tilde{e}} |h|^2 \, dx = \sum_{k \in \mathbb{N}} \left| \langle h, \tilde{g}_k \rangle \right|^2\,,
\]
where \( \tilde{g}_k := g_k \circ \Xi|_{\widetilde{T}} \) for every \( k \in \mathbb{N} \).

Using Lemma~\ref{From_Eigenfunctions_to_mu_1} we obtain
\[
\sum_{k \in \mathbb{N}} \mu_{g_k}(\{\lambda\}) 
= \sum_{k \in \mathbb{N}} \sum_{h \in \mathcal{B}} \left| \langle h, \tilde{g}_k \rangle \right|^2 
= \sum_{h \in \mathcal{B}} \sum_{k \in \mathbb{N}} \left| \langle h, \tilde{g}_k \rangle \right|^2 
= \sum_{h \in \mathcal{B}} \|h\|_{L^2(\widetilde{T})}^2 
= \sum_{h \in \mathcal{B}} |\varphi\left(h\right)(\tilde{t})|^2 
= \mu_t'(\{0\})\, ,
\]
since $\{\varphi(h)\}_{h \in \mathcal{B}}$ is a basis of $\ker\left( A_{\mathcal{T}} \right)$.
\end{proof}

Now we can prove Theorem \ref{Q-Aomoto_first_theorem}(iii).

\begin{proof}[Proof of Theorem~\ref{Q-Aomoto_first_theorem}(iii)]
In~\cite[Observation 5.5]{Garza-Vargas}, Banks, Garza-Vargas, and Mukherjee considered a similar setting and proved that for the spectral measure $\mu_t'$ of a vertex $t \in U$ (where $U$ is the set of vertices corresponding to Q-Aomoto components), the index of the corresponding graph satisfies:
\[
I_{0}\left(\Gamma^{\prime}\right)=cc\left(X_{0}\left(\Gamma^{\prime} \right)\right)-\left|\partial X_{0}\left(\Gamma^{\prime}\right)\right|=\sum_{t\in U}\mu_{t}^{\prime}\left(\left\{ 0\right\} \right)\,.
\]
However, note that $cc\left(X_{0}\left(\Gamma^{\prime}\right)\right)=cc\left(X_{\lambda}\left(\Gamma\right)\right)$
and $\left|\partial X_{0}\left(\Gamma^{\prime}\right)\right|=\left|\partial X_{\lambda}\left(\Gamma\right)\right|$.

We want to compute the density of states (recall Definition \ref{The_measures}). Thus, let $X$ be a connected fundamental set of $\mathcal{T}$. Let $\{g_l'\}_{l \in \mathbb{N}}$ be an orthonormal basis of
$L_{2}\left(\Gamma\backslash E_{\lambda}\left(\Gamma\right)\right)$, and for each $\ell \in \mathbb{N}$, let $\mu_{g_\ell'}$ denote the spectral measure of its lift $\tilde{g}_\ell'$ to $X$ (this is naturally defined since there is an obvious identification between $E(\Gamma)$ and $E(X)$). Then, by Lemma~\ref{From_Eigenfunctions_to_mu_1}(ii), we have:
\[
\mu_{g_{\ell}^{\prime}}\left(\left\{ \lambda\right\} \right)=\sum_{h\in\mathcal{B}}\left|\left\langle \tilde{g}_{\ell}^{\prime},h\right\rangle \right|^{2}=0\,,
\]
since $\operatorname{supp}(h) \subseteq E_\lambda(\Gamma)$ for all $h \in \mathcal{B}$, and therefore $\langle \tilde{g}_\ell', h \rangle = 0$ for every $\ell$.

Now denote by $\mathcal{U}$ the collection of all Q-Aomoto connected components. For each $T \in \mathcal{U}$, let $\{g_k^T\}_{k \in \mathbb{N}}$ be an orthonormal basis of $L_{2}(T)$, and denote by $\mu_{g_k^T}$ the spectral measure of the lift of $g_k^T$ to $X$. Then, applying \eqref{Q_Den_of_States} together with Lemma~\ref{From_Eigenfunctions_to_mu_2}, we obtain
\begin{align*}
& \mathcal{L}_{E}\mu\left(\left\{ \lambda\right\} \right)=\sum_{T\in\mathcal{U}}\sum_{k\in\mathbb{N}}\mu_{g_{k}^{T}}\left(\left\{ \lambda\right\} \right)+\sum_{\ell\in\mathbb{N}}\mu_{g_{\ell}^{\prime}}\left(\left\{ \lambda\right\} \right)=\\
& =\sum_{T\in\mathcal{U}}\sum_{k\in\mathbb{N}}\mu_{g_{k}^{T}}\left(\left\{ \lambda\right\} \right)=\sum_{t\in U}\mu_{t}^{\prime}\left(\left\{ 0\right\} \right)=I_{0}\left(\Gamma^{\prime}\right)=I_{\lambda}\left(\Gamma\right)\,.
\end{align*}
as required.
\end{proof}

Now we can prove Theorem \ref{Theorem_regular}.

\begin{proof}[Proof of Theorem~\ref{Theorem_regular}]
Let $\lambda \in \sigma_{\mathfrak{p}}(\mathcal{H}_{\mathcal{T}})$ and assume, towards a contradiction, that all Q-Aomoto components are of the first type described in Observation~\ref{Observation_in_Q-Aomoto}, namely, finite trees consisting of vertices and their edges. Each such tree is connected to $\partial X_{\lambda}(\Gamma)$ via at least $d$ edges. However, all vertices in $\partial X_{\lambda}(\Gamma)$ also have degree $d$, and thus can absorb at most $d$ edges each from $X_{\lambda}\left(\Gamma\right)$. It follows that
\[
\left|\partial X_{\lambda}\left(\Gamma\right)\right|\geq\frac{d\cdot ccX_{\lambda}\left(\Gamma\right)}{d}=ccX_{\lambda}\left(\Gamma\right)
\]
which contradicts Theorem~\ref{Q-Aomoto_first_theorem}(iii).

In the case of the standard Schr\"odinger operator $\mathcal{H}=-\frac{d^{2}}{dx^{2}}$, this implies that
\[
\sigma_{\mathfrak{p}}\left(\mathcal{H}_{\mathcal{T}}\right)\subseteq\left\{ \frac{n^{2}\pi^{2}}{\ell_{e}^{2}}\mid e\in E,\ n\in\mathbb{N}\right\}.
\]
Indeed, for any $\lambda > 0$ to admit compactly supported eigenfunctions, it must satisfy $\sin(\sqrt{\lambda} \ell_e) = 0$ for some $e \in E$, i.e., $\sqrt{\lambda} \ell_e = n\pi$ for some $n \in \mathbb{N}$.
On the other hand, if $\lambda \leq 0$, then the corresponding solutions are either linear or exponential, and thus cannot vanish on both end points of any edge, implying the absence of such eigenfunctions.

\end{proof}

\begin{remark}\label{Remark_2_Regular}
In the case of a compact $2$-regular quantum graph---that is, a pure cycle---with $\alpha_{\upsilon} < \infty$ for every $\upsilon \in V$, each Q-Aomoto component (of either type) is connected to $\partial X_{\lambda}(\Gamma)$ via exactly two edges. In such a setting, an analogous argument to the one above yields that the universal cover has no eigenvalues and $\sigma_{\mathfrak{p}}\left(\mathcal{H}_{\mathcal{T}}\right)=\emptyset$.
This result is already known also from considerations of multiplicity
of eigenspaces on a line.
\end{remark}

\begin{proof}[Proof of Theorem~\ref{Q-Aomoto_second_theorem}]
This is a proof of part (ii), part (i) is obtained by setting $n=1$. Consider the corresponding discrete graph $\Gamma^{\prime}$ and let $G'$ be the discrete $n$-cover of $\Gamma'$ corresponding to $G$. Further let $\xi':G' \rightarrow \Gamma'$ be the covering map between the discrete graphs. Denote by $P$ and $P_{\partial}$ the orthogonal projections onto the subspaces of functions supported on $\xi'^{-1}\left(X_{0}\left(\Gamma^{\prime}\right)\right)$ and $\xi'^{-1}\left(\partial X_{0}\left(\Gamma^{\prime}\right)\right)$, respectively. We can restrict the Jacobi operator to $\operatorname{Im}(P)$, and obtain
\[
\operatorname{Im} \left(A_{G^{\prime}}|_{\operatorname{Im}(P)} \right) \subseteq \operatorname{Im}(P_{\partial})\,.
\]
Therefore, by the dimension theorem,
\begin{equation}
\dim\left(\ker\left(A_{G^{\prime}}|_{\operatorname{Im}(P)}\right)\right) \geq \dim\left(\mathbb{C}^{n \cdot |X_{0}(\Gamma^{\prime})|}\right) - \dim\left(\mathbb{C}^{n \cdot |\partial X_{0}(\Gamma^{\prime})|}\right) = n \cdot \left(ccX_{\lambda}(\Gamma) - |\partial X_{\lambda}(\Gamma)|\right) > 0\,. \label{dim_theorem}
\end{equation}

As in Notation~\ref{Notation_h_i_s}, let $T_{1}, \dots, T_{m}$ be the Q-Aomoto components of $G$, with corresponding vertices $t_{1}, \dots, t_{m} \in V(G^{\prime})$, and $h_{1}, \dots, h_{m}$ the normalized functions obtained from the projection of the eigenfunctions on these components. Then the mapping
\[
\ker\left(A_{G^{\prime}}|_{\operatorname{Im}(P)}\right) \hookrightarrow \ker\left(\mathcal{H}_{G} - \lambda\right)
\]
given by $\sum_{i=1}^{m} \alpha_{i} \delta_{t_{i}} \mapsto \sum_{i=1}^{m} \alpha_{i} h_{i}$ provides the desired subspace and completes the proof. 
\end{proof}
The inequality in this theorem may be strict. For example, in the compact regular graph of Example~\ref{example_regular}, one may impose the function $\cos(x)$ on each edge (the orientation is irrelevant). This function is indeed an eigenfunction of the Schr\"odinger operator on the compact graph; however, it cannot be generated by the above construction, since it does not vanish at every vertex. By Theorem~\ref{Q-Aomoto_first_theorem}(i), this implies that the function does not vanish outside the lifted Q-Aomoto set.

\begin{proof}[Proof of Theorem~\ref{Q-Aomoto_third_theorem}]
We construct the corresponding discrete graph $\Gamma_{X}^{\prime}$ in the same manner as for the Q-Aomoto set, but using the connected components of $X$ instead of $X_{\lambda}(\Gamma)$. The vertices $t_{1}, \dots, t_{n}$ correspond to the components $T_{1}, \dots, T_{n} \subseteq X$.

From Lemma~\ref{Lemma_From_Derived_Graph}(ii) (with Dirichlet boundary conditions imposed on $\partial X$), the eigenspaces associated with each connected component of $X$ are one-dimensional. We denote by $h_{1}, \dots, h_{n}$ the corresponding normalized real eigenfunctions on these components.

As in the proof of Theorem~\ref{Q-Aomoto_second_theorem}, the dimension theorem implies that
\[
\dim\left(\ker\left(A_{\Gamma_{X}^{\prime}}\right)\right) \geq cc(X) - |\partial X|\,.
\]
This yields Theorem~\ref{Q-Aomoto_third_theorem}(i), via the isometric inclusion
\[
\sum_{i=1}^{m} \alpha_{i} \delta_{t_{i}} \mapsto \sum_{i=1}^{m} \sum_{s \in S_{i}} \alpha_{i} h_{i}\,.
\]

For Theorem \ref{Q-Aomoto_third_theorem}(ii), let $\Gamma_{n}$ be a sequence of $m_{n}$-covers of $\Gamma$
satisfying the assumptions of Theorem~\ref{ESM_limit}(i), and let $\nu_{n}$ denote their normalized eigenvalue counting measures. Let $\mu$ be the density of states of $\Gamma$.

From Theorem~\ref{Q-Aomoto_third_theorem}(i), we obtain the lower bound:
\[
\nu_{n}\left(\left\{ \lambda\right\} \right)=\frac{\dim\left(\ker\left(\mathcal{H}_{\Gamma_{n}}-\lambda\right)\right)}{\mathcal{L}_{E_{n}}}\geq\frac{m_{n}\left(cc\left(X\right)-\left|\partial X\right|\right)}{m_{n}\mathcal{L}_{E}}=\frac{cc\left(X\right)-\left|\partial X\right|}{\mathcal{L}_{E}}\,.
\]
Therefore, by Theorem~\ref{ESM_limit}(ii) and the regularity of $\mu$ (established in Corrolary~\ref{DOS_regular}), it follows that
\[
\mu\left(\left\{ \lambda\right\} \right)\geq\frac{cc\left(X\right)-\left|\partial X\right|}{\mathcal{L}_{E}}>0\,.
\]
\end{proof}

The proof of Theorem~\ref{Berkolaiko_theorem} is based on the following result of Berkolaiko and Liu~\cite{Berkolaiko}.

\begin{lemma}\label{Lemma_Berkolaiko} \cite[Theorem 3.6]{Berkolaiko}
Let \( \Gamma \) be a connected compact quantum graph equipped with the standard Schr\"odinger operator \( \mathcal{H} = -\frac{d^{2}}{dx^{2}} \), and let all vertices satisfy \(\delta\)-type conditions, where Dirichlet conditions are permitted only at vertices of degree one. Suppose \( \Gamma \) is not a pure cycle. Then, for a residual set of edge lengths in \( \mathbb{R}_{+}^{|E|} \), every eigenfunction \( f \) of \( \mathcal{H} \) satisfies one of the following:
\begin{enumerate}[label=\roman*.]
\item \( f(\upsilon) \neq 0 \) for every vertex \( \upsilon \) with \( \alpha_{\upsilon} < \infty \).
\item \( \mathrm{supp}(f) = L \), where \( L \) is a single pure cycle in \( \Gamma \).
\end{enumerate}
\end{lemma}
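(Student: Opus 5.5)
The plan is to write the residual set as a countable intersection of open dense sets. For $N\in\mathbb{N}$, let $G_N\subseteq\mathbb{R}_{+}^{|E|}$ be the set of length vectors $\ell$ with the following property: every eigenvalue $\lambda\le N$ of $\mathcal{H}_{\Gamma(\ell)}$ is simple, and its eigenfunction $f$ satisfies either (i) $f(\upsilon)\neq 0$ at every $\upsilon$ with $\alpha_\upsilon<\infty$, or (ii) $\operatorname{supp}f$ is a single pure cycle. The set $\bigcap_N G_N$ is exactly the set described in the statement (together with simplicity, which only helps), so it suffices to show that each $G_N$ contains an open dense set. Baire's theorem then gives residuality in $\mathbb{R}_{+}^{|E|}$.

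\textbf{Openness.} For fixed $N$, the eigenvalues below $N$ of $\mathcal{H}_{\Gamma(\ell)}$ depend continuously on $\ell$. One sees this by rescaling each edge to $[0,1]$: this gives an analytic family of operators with $\ell$-independent form domain, and by Lemma~\ref{Eigenvelues_to_Infinity} only finitely many eigenvalues lie below $N$. Simple eigenvalues therefore stay simple under small perturbation, and their normalized eigenfunctions (after rescaling) vary continuously in $H^{2}$, so their vertex values vary continuously. Condition (i) is an open condition. Condition (ii) is a closed one, so near a length vector of type (ii) I will argue directly. A pure cycle $L$ carries an eigenfunction vanishing at its attaching vertex $u$ exactly when $\sqrt{\lambda}\,\ell_L\in 2\pi\mathbb{N}$, where $\ell_L$ is the total length of $L$. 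Under perturbation, that eigenvalue moves along the curve $\lambda=(2\pi n/\ell_L)^2$ and keeps its cycle-supported eigenfunction, provided it stays simple. Eigenvalues near $N$ itself need care, so I will replace $N$ by a non-eigenvalue threshold chosen locally.

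\textbf{Density.} This is the main step. I first invoke Friedlander's generic simplicity of the spectrum for graphs that are not pure cycles, so I may assume all eigenvalues below $N$ are simple. Then I take a simple eigenvalue $\lambda(\ell)$ whose eigenfunction $f$ vanishes at some non-Dirichlet vertex $\upsilon$ and is not of type (ii), and show that an arbitrarily small change of lengths destroys the vanishing.
\begin{itemize}
\item \emph{Hadamard-type formula.} The derivative $\partial\lambda/\partial\ell_e$ equals, up to normalization, minus the conserved ``energy'' $|f_e'|^2+\lambda|f_e|^2$ on edge $e$.
\item \emph{Locally analytic equations.} The eigenvalue condition $\det\bigl(\text{secular matrix}(\sqrt\lambda,\ell)\bigr)=0$ and the condition $f(\upsilon)=0$ are both real-analytic in $(\sqrt\lambda,\ell)$ near the point.
\item \emph{Reduction to a dichotomy.} Either the vanishing locus is a proper analytic subvariety, which is nowhere dense and gives density, or $f(\upsilon)\equiv0$ on an open set of lengths.
\item \emph{Ruling out persistent vanishing.} In the second case, I vary the lengths of edges incident to $\upsilon$ independently. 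If an edge $e$ at $\upsilon$ has $f_e\not\equiv0$ and $f_e'(\upsilon)\neq 0$, then moving $\ell_e$ slides the zero of $f_e$ away from $\upsilon$ to first order, unless it is compensated. I expect the compensation to force $f$ to vanish at both ends of every edge in its support, with $\sqrt{\lambda}\,\ell_e\in\pi\mathbb{N}$ on all those edges simultaneously and persistently. On an open set of length vectors (in particular on rationally independent ones), the only way to satisfy this together with the $\delta$-conditions is a chain of edges whose outgoing derivatives cancel at every vertex. Using simplicity of $\lambda$ and the fact that $\Gamma$ is not a pure cycle, such a chain must be a single pure cycle, which is case (ii).
\end{itemize}

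The hardest part is this last combinatorial–analytic step: proving that persistent vanishing at a vertex under all small length perturbations forces the support to be a pure cycle. In particular, a tree-like support must be excluded, by showing that a leaf edge of the support with Dirichlet-like zero values cannot satisfy the $\delta$-condition for generic lengths, and a support that is a cycle meeting a vertex of degree at least $3$ in its interior must also be excluded. I would attempt this by the leaf-peeling induction used in the proof of Lemma~\ref{Lemma_From_Derived_Graph}, combined with rational independence of the lengths.
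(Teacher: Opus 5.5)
The paper gives no proof of this lemma; it is quoted from Berkolaiko--Liu \cite{Berkolaiko}, so your argument has to stand on its own, and it does not. The routine reductions are all fine: the countable intersection over energy windows, generic simplicity, real-analytic dependence of a simple eigenpair on $\ell$, and the dichotomy that $\ell\mapsto f_\ell(\upsilon)$ either has nowhere dense zero set or vanishes on an open set. But the real content sits in your last bullet, where you only state an expectation. Worse, the mechanism you expect is false.

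Let $L$ be a pure cycle attached at $u$. In a simple graph $L$ has at least two degree-$2$ vertices. If these carry Kirchhoff conditions, then $\sin(2\pi n s/\ell_L)$ on $L$ (with $s$ the arclength from $u$), extended by zero, is an eigenfunction for \emph{every} length vector and vanishes at $u$. This is exactly the persistent type (ii) case. Yet for generic lengths it does not vanish at the degree-$2$ vertices of $L$, and $\sqrt{\lambda}\,\ell_e\notin\pi\mathbb{N}$ for each individual edge $e$ of $L$.

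Conversely, suppose persistent vanishing did force $f$ to vanish at both ends of every support edge with $\sqrt{\lambda}\,\ell_e\in\pi\mathbb{N}$. Since the $\ell_e$ vary independently, no two support edges could keep a fixed rational length ratio on an open set. So the support would be a single edge with distinct endpoints, and the $\delta$-condition at a non-Dirichlet endpoint would give $f_e'=0$ there, hence $f\equiv 0$. Your route would therefore ``prove'' that no eigenfunction vanishes persistently at a vertex, contradicting the exception the statement must allow. Neither rational independence nor the leaf-peeling of Lemma~\ref{Lemma_From_Derived_Graph} can repair this, because the obstruction is the example above, not a missing case distinction.

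What is missing is a genuine proof that $f_\ell(\upsilon)\equiv0$ on an open set forces $\operatorname{supp}f$ to be a pure cycle. One possible line is as follows. Choose $\upsilon$ in the closure of the support; the $\delta$-condition then gives at least two support edges at $\upsilon$. Regard $f$ as an eigenfunction of the graph in which $\upsilon$ is split into $\deg\upsilon$ Dirichlet endpoints, with the extra constraint that its outgoing derivatives there sum to zero. The components of the split graph depend on disjoint sets of lengths. Their eigenvalue branches have nonzero length derivatives, since by your Hadamard formula the energy on a support edge ending at a zero of the eigenfunction is $|f_e'|^2>0$. So after shrinking the open set, $f$ lives on a single component containing at least two of the new endpoints.

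The genuinely hard step, which your proposal does not touch, is to show that the derivative constraint cannot hold identically along an analytic branch unless that component is $L$ cut open at $u$, with jump-free interior vertices and $\lambda=(2\pi n/\ell_L)^2$.

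Two smaller points:
\begin{enumerate}
\item Your openness argument near type (ii) points, and your claim that $L$ carries such an eigenfunction exactly when $\sqrt{\lambda}\,\ell_L\in2\pi\mathbb{N}$, tacitly assume $\alpha=0$ at the degree-$2$ vertices of $L$. The statement does not assume this. With $\alpha\neq0$ there, type (ii) is not persistent and has to be handled through the analytic dichotomy instead.
\item Friedlander's simplicity theorem concerns Kirchhoff conditions. For $\delta$-type conditions, the generic simplicity you invoke is itself part of what \cite{Berkolaiko} establishes.
\end{enumerate}
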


\begin{proof}[Proof of Theorem~\ref{Berkolaiko_theorem}]
If $\Gamma$ is a pure cycle, then it is a compact $2$-regular quantum graph. As established in Remark~\ref{Remark_2_Regular}, in such a case the universal cover does not admit eigenvalues.

Assume now that $\Gamma$ is not a pure cycle and assume first that $\Gamma$ does not contain a pure cycle. By Theorem~\ref{Q-Aomoto_second_theorem}(i), $\sigma_{\mathfrak{p}}\left(\mathcal{H}_{\mathcal{T}}\right) \subseteq  \sigma_{\mathfrak{p}}\left(\mathcal{H}_{\Gamma}\right)$ and for each $\lambda$-eigenfunction, $\tilde{f}$, on $\mathcal{T}$ there exists a $\lambda$-eigenfunction, $f$, on $\Gamma$, supported on \( \Xi\left(\mathrm{supp}(\tilde{f})\right) \). Furthermore, for any \( \upsilon \in V \) with \( f(\upsilon) \neq 0 \), there exists \( \tilde{\upsilon} \in \Xi^{-1}(\upsilon) \) such that \( \tilde{f}(\tilde{\upsilon}) \neq 0 \). By Lemma \ref{Lemma_Berkolaiko}, for a residual set of edge-lengths these are all vertices $\upsilon \in V(\Gamma)$ with $\alpha_\upsilon<\infty$, which is a contradiction to the fact that $\Gamma$ contains a cycle. Thus, for a residual set of edge-lengths, $\sigma_{\mathfrak{p}}\left(\mathcal{H}_{\mathcal{T}}\right)$ is empty.

We may therefore restrict our attention to finite graphs that are not a pure cycle but contain a pure cycle. So let $L\subseteq\Gamma$ be a pure cycle that is connected to the rest of the graph via a vertex $u \in V$. We construct a modified graph by replacing $u$ with $\left|E_{u}\right|$ distinct vertices, where each new vertex is connected to a single edge from $E_{u}$, and we impose Dirichlet boundary conditions at each of these new vertices.

As a result, for every such cycle $L$, we obtain a finite collection of connected subgraphs $\Gamma_{1}^{L}, \dots, \Gamma_{m_{L}}^{L}, \widehat{L}$, where $\widehat{L}$ is the modification of $L$ after this process. An illustration of this decomposition is provided in Figure~\ref{fig:Sepetetion_Berkolaiko}.

\begin{minipage}{0.9\textwidth}
\begin{center}
\begin{tikzpicture}[scale=1.5]

\begin{scope}[shift={(0,4)}]

\node[circle, draw, thick, minimum size=4mm, label=below:$u$] (u) at (0,0) {};

\node[circle, draw, minimum size=3mm] (x1) at (-1,0.4) {};
\node[circle, draw, minimum size=3mm] (x2) at (-1,-0.6) {};
\node[circle, draw, minimum size=3mm] (x3) at (0.9,-0.8) {};

\draw (u) -- (x1);
\draw (u) -- (x2);
\draw (u) -- (x3);

\node[circle, draw, minimum size=3mm] (y1) at (-1.4,-0.28) {};
\node[circle, draw, minimum size=3mm] (z1) at (-2,0.7-0.2) {};
\draw (x1) -- (y1);
\draw (x1) -- (z1);

\node[circle, draw, minimum size=3mm] (y2) at (0.2,-1.2) {};
\draw (x2) -- (y2) -- (x3);

\node[circle, draw, minimum size=3mm] (a) at (0.7,0.3) {};
\node[circle, draw, minimum size=3mm] (b) at (1.0,0.9) {};
\node[circle, draw, minimum size=3mm] (c) at (0.5,1.4) {};
\node[circle, draw, minimum size=3mm] (d) at (-0.2,1.3) {};
\node[circle, draw, minimum size=3mm] (e) at (-0.5,0.9) {};

\draw (u) -- (a) -- (b) -- (c) -- (d) -- (e) -- (u);


\node at (0.3,0.9) {\fontsize{18}{21}\selectfont \textbf{L}};
\end{scope}

\begin{scope}[shift={(0,0.2)}]
\node[circle, draw, minimum size=3mm] (u1) at (-1*0.4,0.4*0.4) {};
\node[circle, draw, minimum size=3mm] (u2) at (-1*0.3,-0.6*0.3) {};
\node[circle, draw, minimum size=3mm] (u3) at (0.9*0.3,-0.8*0.3) {};
\node[circle, draw, minimum size=3mm] (u4) at (0.7*0.4,0.3*0.4) {};
\node[circle, draw, minimum size=3mm] (u5) at (-0.5*0.4,0.9*0.4) {};

\node[circle, draw, minimum size=3mm] (x1) at (-1,0.6-0.2) {};
\node[circle, draw, minimum size=3mm] (x2) at (-1,-0.6) {};
\node[circle, draw, minimum size=3mm] (x3) at (0.9,-0.8) {};

\draw (u1) -- (x1);
\draw (u2) -- (x2);
\draw (u3) -- (x3);

\node[circle, draw, minimum size=3mm] (y1) at (-1.4,-0.28) {};
\node[circle, draw, minimum size=3mm] (z1) at (-2,0.7-0.2) {};
\draw (x1) -- (y1);
\draw (x1) -- (z1);

\node[circle, draw, minimum size=3mm] (y2) at (0.2,-1.2) {};
\draw (x2) -- (y2) -- (x3);

\node[circle, draw, minimum size=3mm] (a) at (0.7,0.3) {};
\node[circle, draw, minimum size=3mm] (b) at (1.0,0.9) {};
\node[circle, draw, minimum size=3mm] (c) at (0.5,1.4) {};
\node[circle, draw, minimum size=3mm] (d) at (-0.2,1.3) {};
\node[circle, draw, minimum size=3mm] (e) at (-0.5,0.9) {};

\draw (u4) -- (a) -- (b) -- (c) -- (d) -- (e) -- (u5);

\node at (0.3,0.9) {\fontsize{18}{21}\selectfont $\widehat{\textbf{L}}$};
\node at (0,-0.6) {\fontsize{18}{21}\selectfont\textbf{$\Gamma_{1}^{L}$}};
\node at (-1.5,0.16) {\fontsize{18}{21}\selectfont\textbf{$\Gamma_{2}^{L}$}};
\end{scope}

\draw[->, thick, line width=1.5pt, shorten >=5pt] (0,2.7) -- (0,1.6);

\end{tikzpicture}
\end{center}
\end{minipage}
\vspace{1em}
\captionof{figure}{The top graph contains a loop $L$ connected to the rest of the graph via a vertex $u$ of degree $5$. In the bottom graph, the vertex $u$ is split into five distinct vertices of degree $1$, each retaining one of the incident edges. This operation separates $L$ from the rest of the graph, and the remaining part of the graph is also partitioned into two connected components, denoted by $\Gamma_{1}^{L}$ and $\Gamma_{2}^{L}$.}
\label{fig:Sepetetion_Berkolaiko}

\vspace{1em}
We aim to prove that for each \( i \), $\sigma_{\mathfrak{p}}\!\left(\mathcal{H}_{\Gamma_{i}^{L}}\right) \cap \sigma_{\mathfrak{p}}\!\left(\mathcal{H}_{\widehat{L}}\right) = \emptyset$ for a residual set of edge lengths. To this end, it suffices to show that \( 0 \notin \sigma_{\mathfrak{p}}(\mathcal{H}_{\widehat{L}}) \) on an open and dense set of lengths, and that for every \( n \in \mathbb{N} \), the \( n \)-th eigenvalue of \( \mathcal{H}_{\Gamma_{i}^{L}} \) (see Lemma~\ref{Eigenvelues_to_Infinity}) is not an eigenvalue of \( \mathcal{H}_{\widehat{L}} \) on an open and dense set. The intersection of all these sets is then residual.

To see that $0 \notin \sigma_{\mathfrak{p}}\!\left(\mathcal{H}_{\widehat{L}}\right)$ note that an eigenfunction with this value is linear on each edge. By the Dirichlet conditions at the two copies of $u$ in $\widehat{L}$, it has to vanish at both these two vertices. Fixing an edge, $e_1$, emanating from $u$, for any choice of all other edge-lengths in $\widehat{L}$, there is at most one choice of length for $e_1$ that will make $0$ an eigenvalue. Thus, the set of edge-lengths in $\widehat{L}$ for which $0 \notin \sigma_{\mathfrak{p}}\!\left(\mathcal{H}_{\widehat{L}}\right)$ is dense. Moreover, since eigenvalues depend continuously on the edge lengths
(see, e.g., \cite{berkolaiko8,Introduction_to_Quantum_Graphs}),
small perturbations preserve strict inequalities between eigenvalues and zero, so this set is also open.

Now let \( n \in \mathbb{N} \) be such that the \( n \)-th eigenvalue of
\( \mathcal{H}_{\Gamma_{i}^{L}} \) is nonzero.
According to \cite{Berkolaiko}, the nonzero eigenvalues of \( \mathcal{H}_{\widehat{L}} \)
can be controlled by varying the length of a single edge in \( \widehat{L} \).
Therefore, for fixed edge-lengths of \( \Gamma_{i}^{L} \) and all but one edge-lengths in $\widehat{L}$, the set of lengths of the excluded edge, for which this eigenvalue coincides with an eigenvalue of \( \mathcal{H}_{L} \) is countable.
Its complement is thus dense, and again openness follows from continuity of eigenvalues with respect to edge lengths.

Taking the intersection over all pure cycles \( L \subseteq \Gamma \) and all corresponding components \( \Gamma_{i}^{L} \), and intersecting further with the residual set obtained in Lemma~\ref{Lemma_Berkolaiko}, we obtain a residual set
\( S \subseteq \mathbb{R}_{+}^{|E|} \)
of edge lengths such that, for every pure cycle \( L \) and every \( i \),
\[
\sigma_{\mathfrak{p}}\!\left(\mathcal{H}_{\Gamma_{i}^{L}}\right)
\cap
\sigma_{\mathfrak{p}}\!\left(\mathcal{H}_{\widehat{L}}\right)
= \emptyset .
\]

\medskip

Let \( \ell \in S \), and assume that for the edge lengths \( \ell \), there exists $\lambda \in \sigma_{\mathfrak{p}}\left(\mathcal{H}_{\mathcal{T}}\right)$ with an eigenfunction \( \tilde{f} \). Then, again by Theorem~\ref{Q-Aomoto_second_theorem}(i), $\Xi\left(\mathrm{supp}\left(\tilde{f} \right)\right)$ is contained in the $\lambda$-Q Aomoto set, $\lambda \in \sigma_{\mathfrak{p}}\left(\mathcal{H}_{\Gamma}\right)$, and there exists an eigenfunction \( f \) supported on \( \Xi\left(\mathrm{supp}\left(\tilde{f} \right)\right) \). Furthermore, for any \( \upsilon \in V \) with \( f(\upsilon) \neq 0 \), there exists \( \tilde{\upsilon} \in \Xi^{-1}(\upsilon) \) such that \( \tilde{f}(\tilde{\upsilon}) \neq 0 \).

If \( f(\upsilon) \neq 0 \) for all \( \upsilon \in V \) with \( \alpha_{\upsilon} < \infty \), then the Q-Aomoto set of \( \Gamma \) contains a cycle (since $\alpha_\upsilon=\infty$ only on leaves), contradicting Theorem~\ref{Q-Aomoto_first_theorem}(i). Therefore, by Lemma~\ref{Lemma_Berkolaiko}, we must have
\[
\mathrm{supp}(f) = L
\]
for some unique pure cycle \( L \subseteq \Gamma \), and the vertex \( u \) connecting \( L \) to the rest of the graph must lie in \( \partial X_{\lambda}(\Gamma) \) (by Lemma~\ref{Remark_Pure_Cycle}). It follows that $\lambda \in \sigma_{\mathfrak{p}}\!\left(\mathcal{H}_{\widehat{L}}\right)$.

Now, if \( X_{\lambda}(\Gamma) \subseteq L \), then by the properties of cycles we have
\( I_{\lambda}(\Gamma)=0 \), which yields a contradiction. Therefore, there exists
\( i \in [m_{L}] \) such that
\[
X_{\lambda}(\Gamma)\cap\Gamma_{i}^{L}\neq\emptyset .
\]
Hence, there exists an edge \( e\in X_{\lambda}(\Gamma)\cap\Gamma_{i}^{L} \),
a lift \( \widetilde{e}\in\Xi^{-1}(e) \), and a function
\( g\in\ker(\lambda-\mathcal{H}_{\mathcal{T}}) \)
such that \( g|_{\widetilde{e}} \) is not identically zero and
\[
g(\widetilde{u})=0
\qquad
\text{for every } \widetilde{u}\in\Xi^{-1}(u).
\]

We now construct a modified graph from \( \mathcal{T} \) as follows.
Replace each vertex in \( \Xi^{-1}( u) \) by \( |E_{u}| \) distinct vertices, where each new vertex
is incident to exactly one edge from \( E_{u} \).
Impose Dirichlet boundary conditions at each of these new vertices, and let
\( \mathcal{T}_{i}^{L} \) denote the connected component containing \( \widetilde{e} \).
It is straightforward to verify that \( \mathcal{T}_{i}^{L} \) is the universal cover
of \( \Gamma_{i}^{L} \).
Moreover, the restriction \( g|_{\mathcal{T}_{i}^{L}} \) defines an eigenfunction, and therefore
\[
\lambda \in \sigma_{\mathfrak{p}}\!\left(\mathcal{H}_{\mathcal{T}_{i}^{L}}\right).
\]

By Theorem~\ref{Q-Aomoto_second_theorem}(i),
\[
\sigma_{\mathfrak{p}}\!\left(\mathcal{H}_{\mathcal{T}_{i}^{L}}\right)
\subseteq
\sigma_{\mathfrak{p}}\!\left(\mathcal{H}_{\Gamma_{i}^{L}}\right).
\]
Consequently,
\[
\lambda \in
\sigma_{\mathfrak{p}}\!\left(\mathcal{H}_{\Gamma_{i}^{L}}\right)
\cap
\sigma_{\mathfrak{p}}\!\left(\mathcal{H}_{\widehat{L}}\right),
\]
which contradicts the choice of edge lengths. This completes the proof.
\end{proof}

\end{document}